\newtheorem{thm}{Theorem}
\newtheorem{lem}{Lemma}
\newtheorem{prop}{Proposition}
\theoremstyle{definition}
\newtheorem{example}{Example}
\newcommand{\cQ}{\mathcal{Q}}
\newcommand{\fin}{\ensuremath{\mathrm{fin}}}
\newcommand{\cPfin}{\ensuremath{\mathcal{P}}^\fin}
\newcommand{\vrleq}[2]{\mathrm{VR}_\le(#1;#2)}
\newcommand{\vrm}[2]{\mathrm{VR}^m(#1;#2)}
\newcommand{\vrmleq}[2]{\mathrm{VR}^m_\leq(#1;#2)}
\newcommand{\vrmcircleq}{\mathrm{VR}^m_\leq(S^1)}
\newcommand{\diam}{\mathrm{diam}}
\newcommand{\supp}{\mathrm{supp}}
\newcommand{\arcs}{\mathrm{arcs}}
\newcommand{\cost}{\mathrm{cost}}
\title{Vietoris--Rips Metric Thickenings of the Circle}
\author{Michael Moy}
\date{}
\begin{document}

\maketitle

\begin{abstract}

Vietoris--Rips metric thickenings have previously been proposed as an alternate approach to understanding Vietoris--Rips simplicial complexes and their persistent homology.
Recent work has shown that for totally bounded metric spaces, Vietoris--Rips metric thickenings have persistent homology barcodes that agree with those of Vietoris--Rips simplicial complexes, ignoring whether endpoints of bars are open or closed.
Combining this result with the known homotopy types and barcodes of the Vietoris--Rips simplicial complexes of the circle, the barcodes of the Vietoris--Rips metric thickenings of the circle can be deduced up to endpoints, and conjectures have been made about their homotopy types.
We confirm these conjectures are correct, proving that the Vietoris--Rips metric thickenings of the circle are homotopy equivalent to odd-dimensional spheres at the expected scale parameters.
Our approach is to find quotients of the metric thickenings that preserve homotopy type and show that the quotient spaces can be described as CW complexes.
The quotient maps are also natural with respect to the scale parameter and thus provide a direct proof of the persistent homology of the metric thickenings.

\end{abstract}

\section{Introduction}\label{section:introduction}

Vietoris--Rips simplicial complexes have gained attention recently because of their use in applied topology, and in particular, they provide a practical method of computing persistent homology.
This method is justified theoretically by stability theorems that show Vietoris--Rips persistent homology handles perturbations to the underlying space well~(\cite{chazalPersistenceStabilityGeometric2014}).
Despite their use and these stability theorems, there is still a limited theoretical understanding of the topology of Vietoris--Rips complexes, even for simple underlying spaces.
One significant discovery in this area was the homotopy types of the Vietoris--Rips complexes of the circle, given in~\cite{AA_circle}.
This built on previous work in~\cite{Adamaszek_clique_complexes}, which gave the homotopy types of finite numbers of evenly spaced points on the circle; along with the stability of persistent homology, these finite cases suggested reasonable conjectures for the Vietoris--Rips complexes of the circle.
This understanding of the Vietoris--Rips complexes of the circle has led to an improved interpretation of the persistent homology of more general spaces: \cite{Virk_footprints} shows that certain loops in a space may be detected by persistent homology, as they contribute persistent homology bars similar to those of the circle.

Other work in this area has improved the understanding of the homotopy types of Vietoris--Rips complexes of general $n$-spheres at low scale parameters: two distinct approaches of \cite{lim_memoli_okutan} and \cite{zaremsky} apply in a range of scale parameters where the homotopy type of the $n$-sphere is recovered. 
Furthermore, \cite{lim_memoli_okutan} describes how their results in fact improve upon those implied by Hausmann's theorem (\cite{Hausmann1995}), which states that in the more general setting of Riemannian manifolds, the Vietoris--Rips complexes recover the homotopy type of the manifold at low scale parameters (in a range depending on the manifold).

Vietoris--Rips metric thickenings, along with more general simplicial metric thickenings, were introduced in~\cite{AAF} and provide an alternate approach.
Their topology agrees with Vietoris--Rips complexes in the case of finite underlying spaces but can differ in general.
However, results in~\cite{Moy_Masters_Thesis} and~\cite{vrp_arxiv} show that, under reasonable conditions, the persistent homology barcodes of Vietoris--Rips metric thickenings agree with those of Vietoris--Rips complexes, ignoring differences of open or closed endpoints of bars.
This suggests a close topological relationship between these two constructions.
In particular, the barcodes of the Vietoris--Rips metric thickenings of the circle agree with those of the Vietoris--Rips complexes of the circle, which provides some evidence that the homotopy types should agree.
The homotopy types of the metric thickenings were in fact already conjectured based on the known homotopy types of the simplicial complexes: see for example Conjecture~6 of~\cite{ABF}.
The homotopy types of the metric thickenings of the circle have previously been found for only low scale parameters~\cite{AAF, ABF, AM}.
More generally, Theorem~5.4 of~\cite{AAF} identifies the first new homotopy type, $S^n*\frac{\mathrm{SO(n+1)}}{A_{n+2}}$, that appears in the filtration of Vietoris--Rips metric thickenings of any $n$-sphere.
The proof applies at the single lowest scale parameter at which the metric thickening is no longer homotopy equivalent to the $n$-sphere.

We will use the intuition provided by these previous results to find all homotopy types of the Vietoris--Rips metric thickenings of the circle.
The main results are the following homotopy types, given in  Theorem~\ref{thm:vrm_homotopy_equivalent_to_spheres}:
\[
\vrmleq{S^1}{r} \simeq \begin{cases}
S^{2k+1} & \text{ if $r \in \big[\frac{2k \pi}{2k+1}, \frac{(2k+2)\pi}{2k+3}\big)$} \\
\{\ast\} & \text{ if $r \geq \pi$}.
\end{cases}
\]
Our technique will first show the metric thickenings are homotopy equivalent to CW complexes, which provide a clear understanding of why these homotopy types appear.
These CW complexes provide a much simpler view of the metric thickenings and will in fact be constructed as quotients of the metric thickenings.
The quotients will identify each measure with a measure supported on an odd number of regularly spaced points on the circle; a large part of our work will be dedicated to constructing the quotient maps and showing they are homotopy equivalences.

The CW complexes reveal the homotopy types of odd-dimensional spheres as follows.
For $k \geq 0$ and $r \in \big[\frac{2k \pi}{2k+1}, \frac{(2k+2)\pi}{2k+3}\big)$ as shown above, there is one $n$-cell for each dimension $0 \leq n \leq 2k+1$.
The $1$-cell is glued by its two endpoints to the $0$-cell to create a circle, which should be viewed as the underlying copy of $S^1$.
For $k \geq 1$, the metric thickening contains measures supported on three evenly spaced points around the circle, which can be viewed as points of a triangle, so we obtain a subspace of triangular measures on a set of triangles parameterized by a circle.
The single two cell is represented by a single distinguished equilateral triangle and is glued by its boundary to the circle to produce a $2$-disk.
The remaining triangles are parameterized by an interval, so adding in the remaining triangular measures amounts to gluing in a $3$-cell.  
The boundary of this $3$-cell is glued to the contractible $2$-disk, producing a space homotopy equivalent to $S^3$.
Higher dimensional spheres appear similarly.
For the final two steps, a single distinguished $2k$-cell, represented by measures supported on a chosen set of $2k+1$ evenly spaced points, is glued into the previous $(2k-1)$-sphere to produce a $2k$-disk, and then a $(2k+1)$-cell is glued in by its boundary, giving a space homotopy equivalent to $S^{2k+1}$.
\begin{figure}[h]
    \centering
    \fontsize{9pt}{11pt}
    \def\svgwidth{.9\textwidth}
\begingroup%
  \makeatletter%
  \providecommand\color[2][]{%
    \errmessage{(Inkscape) Color is used for the text in Inkscape, but the package 'color.sty' is not loaded}%
    \renewcommand\color[2][]{}%
  }%
  \providecommand\transparent[1]{%
    \errmessage{(Inkscape) Transparency is used (non-zero) for the text in Inkscape, but the package 'transparent.sty' is not loaded}%
    \renewcommand\transparent[1]{}%
  }%
  \providecommand\rotatebox[2]{#2}%
  \newcommand*\fsize{\dimexpr\f@size pt\relax}%
  \newcommand*\lineheight[1]{\fontsize{\fsize}{#1\fsize}\selectfont}%
  \ifx\svgwidth\undefined%
    \setlength{\unitlength}{651.96850394bp}%
    \ifx\svgscale\undefined%
      \relax%
    \else%
      \setlength{\unitlength}{\unitlength * \real{\svgscale}}%
    \fi%
  \else%
    \setlength{\unitlength}{\svgwidth}%
  \fi%
  \global\let\svgwidth\undefined%
  \global\let\svgscale\undefined%
  \makeatother%
  \begin{picture}(1,0.47391304)%
    \lineheight{1}%
    \setlength\tabcolsep{0pt}%
    \put(0,0){\includegraphics[width=\unitlength,page=1]{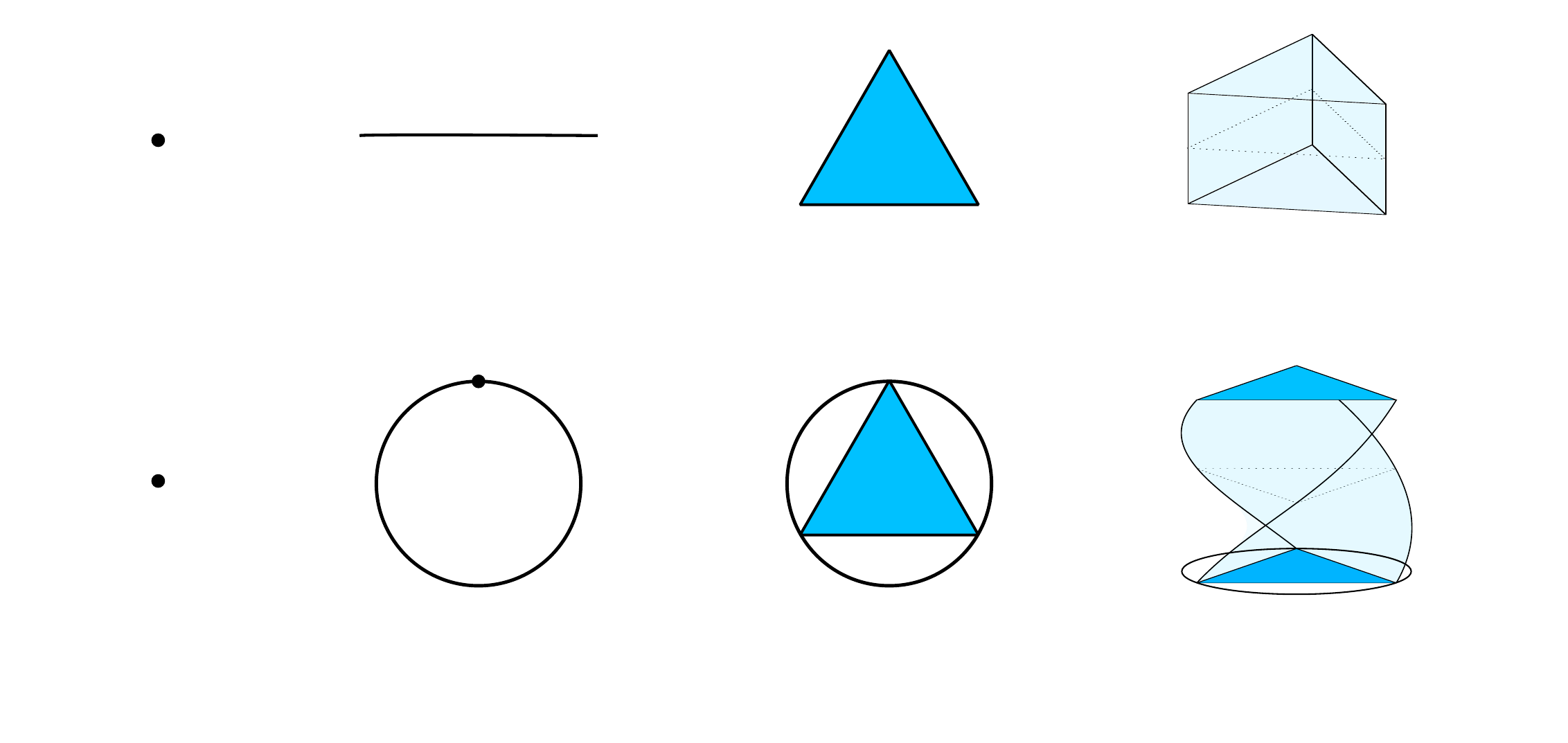}}%
    \put(0.08904177,0.02006408){\makebox(0,0)[lt]{\lineheight{1.25}\smash{\begin{tabular}[t]{l}$D^0$\end{tabular}}}}%
    \put(0.29599828,0.02006408){\makebox(0,0)[lt]{\lineheight{1.25}\smash{\begin{tabular}[t]{l}$S^1$\end{tabular}}}}%
    \put(0.55556352,0.02006408){\makebox(0,0)[lt]{\lineheight{1.25}\smash{\begin{tabular}[t]{l}$D^2$\end{tabular}}}}%
    \put(0.8173026,0.02006408){\makebox(0,0)[lt]{\lineheight{1.25}\smash{\begin{tabular}[t]{l}$S^3$\end{tabular}}}}%
  \end{picture}%
\endgroup%

    \caption{The CW complex giving the homotopy type of $S^3$ has one cell in dimensions 0, 1, 2, and 3.
    The 2-cell is a triangle and is glued by its boundary to the circle.
    The 3-cell is a triangular prism with cross sectional triangles corresponding to all equilateral triangles on the circle.
    Both of its triangular faces are glued to the 2-cell, with the top face rotated by $\frac{2 \pi }{3}$.
    The rectangular faces are collapsed to the circle.}
    \label{figure:cell_structure}
\end{figure}

This approach of reducing to a CW complex is reminiscent of Morse theory and will hopefully contribute to the development of a more general Morse-like theory for simplicial metric thickenings.
Morse-theoretic ideas have previously been applied to related problems.
One such idea is described in~\cite{katz1991neighborhoods}, and this work was later found to be closely related to Vietoris--Rips complexes; see~\cite{lim_memoli_okutan}.
The main result of~\cite{katz1991neighborhoods} in fact implies the homotopy type of $S^3$ in the case of Vietoris--Rips complexes of the circle.
Discrete Morse theory has also been applied to the study of the Vietoris--Rips complexes of spheres: \cite{zaremsky} uses a version of discrete Morse theory to show the complexes recover the homotopy types of $n$-spheres at low scale parameters.

This paper is organized as follows.
In Section~\ref{section:notation}, we provide the relevant background information on Vietoris--Rips metric thickenings and give a useful technique for constructing homotopies in metric thickenings.
Section~\ref{section:arcs} describes properties of the Vietoris--Rips metric thickening of the circle that will be used throughout, many of which suggest the methods of the later sections.
In Section~\ref{section:homotopies_quotients_HEP}, we give background information and basic results related to quotients and the homotopy extension property, and we show that certain pairs of subspaces of the metric thickenings of the circle have the homotopy extension property.
Section~\ref{section:collapse} describes homotopies that collapse certain subspaces of the metric thickenings, and in Section~\ref{section:sequence_of_quotients}, we piece these homotopies together, defining a quotient map that is a homotopy equivalence.
In Section~\ref{section:cw_complex_and_homotopy_types}, we show this quotient has the CW complex structure described above and use the CW complex to find the homotopy types of the metric thickenings.
As a final result, in Section~\ref{section:persistent_homology}, we find the persistent homology of the metric thickenings directly, that is, without relying on any previous knowledge about the simplicial complexes.

\vspace{.15cm}
\section{Background, Notation, and Conventions}\label{section:notation}

We begin with an overview of Vietoris--Rips metric thickenings and the concepts from optimal transport used to define them.
Details and further properties sufficient for our purposes can be found in~\cite{AAF}, and more general constructions and technical discussions can be found in~\cite{vrp_arxiv}.

\subsection{Spaces of Probability Measures and the Wasserstein Distance}\label{subsection: spaces of probability measures and Wasserstein distance}

For any metric space $(X,d)$, we let $\cPfin(X)$ be the space of finitely supported probability measures on $X$ with the 1-Wasserstein distance.  
We write the 1-Wasserstein distance as $d_W$ for any space (and from Section~\ref{section:arcs} on, it will always be the 1-Wasserstein distance on subspaces of $\cPfin(S^1)$).
Each finitely supported probability measure $\mu \in \cPfin(X)$ can be written as a convex combination of delta measures: $\mu = \sum_{i=1}^n a_i \delta_{x_i}$ where $a_i \geq 0$ for each $i$, $\sum_{i=1}^n a_i = 1$, each $\delta_{x_i}$ is the delta measure at $x_i$, and $x_1, \dots, x_n \in X$. 
We can think of each $a_i$ as the amount of mass located at the point $x_i$.
We write the support of a measure $\mu \in \cPfin(X)$ as $\supp(\mu)$, so if $\mu = \sum_{i=1}^n a_i \delta_{x_i}$, we have $\supp(\mu) = \{ x_i \mid a_i > 0 \}$.

The 1-Wasserstein distance can be thought of in the language of optimal transport: we will imagine transporting the mass of one measure to the mass of another, where moving a mass of $m$ from $x$ to $x'$ costs $m \cdot d(x,x')$.
In our setting of finitely supported probability measures, the 1-Wasserstein distance has a simple description.
A transport plan between two measures $\mu = \sum_{i=1}^n a_i \delta_{x_i}$ and $\mu'= \sum_{j=1}^{n'} a'_j \delta_{x'_j}$ can be described by a \textit{matching}, an indexed set $\kappa = \{ \kappa_{i,j} \mid 1 \leq i \leq n, 1 \leq j \leq n' \}$ of nonnegative real numbers such that $\sum_{i=1}^n \kappa_{i,j} = a'_j$ for each $j$ and $\sum_{j=1}^{n'} \kappa_{i,j} = a_i$ for each $i$.
Each $\kappa_{i,j}$ represents the mass transported from $x_i$ to $x'_j$, and we will sometimes succinctly describe matchings by describing how mass is transported.
The cost of this matching is defined by $\cost(\kappa) = \sum_{i=1}^n \sum_{j=1}^{n'} \kappa_{i,j} d(x_i, x'_j)$.
The 1-Wasserstein distance between $\mu$ and $\mu'$ is then given by 
\[
d_W(\mu, \mu') = \inf_{\kappa}(\cost(\kappa)) = \inf_{\kappa} \left( \sum_{i=1}^n \sum_{j=1}^{n'} \kappa_{i,j} d(x_i, x'_j)\right),
\]
where the infimum is taken over all matchings $\kappa$ from $\mu$ to $\mu'$.
In this setting of finitely supported measures, the infimum is always attained (the space of matchings is a compact subset of $\mathbb{R}^{n \cdot n'}$), and a matching $\kappa$ from $\mu$ to $\mu'$ such that $\cost(\kappa) = d_W(\mu, \mu')$ will be called an \textit{optimal matching}.

A convenient way of working with topology of $\cPfin(X)$ comes from the relationship between the Wasserstein distance and weak convergence.
A sequence of measures $\{ \mu_n \}_{n \geq 0}$ in $\cPfin(X)$ is said to \textit{converge weakly} to $\mu \in \cPfin(X)$ if $\lim_{n \to \infty} \int_X f \,d \mu_n = \int_X f \,d \mu$ for all bounded and continuous $f \colon X \to \mathbb{R}$.
We record the following result in language that applies to our work: for a more general statement, see~\cite{villani2003topics}.

\begin{lem}\label{lemma:weak_convergence_and_Wasserstein_convergence}
Let $X$ be a Polish bounded metric space and suppose $\{ \mu_n \}_{n \geq 0}$ is a sequence of measures in $\cPfin(X)$.
Then $\{ \mu_n \}_{n \geq 0}$ converges weakly to $\mu \in \cPfin(X)$ if and only if $\lim_{n \to \infty} d_W(\mu_n, \mu) = 0$.
\end{lem}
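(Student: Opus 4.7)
The plan is to establish the two directions separately. In the forward direction I use Kantorovich--Rubinstein duality together with the bounded-Lipschitz characterization of weak convergence; in the backward direction I use Skorokhod's representation theorem together with the bounded convergence theorem. The boundedness of $X$ is what removes any moment hypothesis normally required for the equivalence of weak convergence and $W_1$-convergence, and the fact that both $\mu_n$ and $\mu$ lie in $\cPfin(X)$ makes every coupling of them automatically supported on the finite product $\supp(\mu_n) \times \supp(\mu)$, so Skorokhod's theorem will produce an honest matching in the sense of Section~\ref{subsection: spaces of probability measures and Wasserstein distance}.

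For the forward direction, suppose $d_W(\mu_n, \mu) \to 0$. Kantorovich--Rubinstein duality identifies
$$d_W(\nu, \nu') = \sup\left\{ \int_X f\, d\nu - \int_X f\, d\nu' \mid f \colon X \to \mathbb{R} \text{ is } 1\text{-Lipschitz} \right\},$$
so for every $1$-Lipschitz $f$ we have $\int f\, d\mu_n \to \int f\, d\mu$, and rescaling by the Lipschitz constant extends this convergence to every bounded Lipschitz $f$ (bounded because $X$ has finite diameter). The standard metric-space Portmanteau theorem states that convergence of integrals against all bounded Lipschitz test functions is equivalent to weak convergence, so $\mu_n \to \mu$ weakly.

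For the backward direction, suppose $\mu_n \to \mu$ weakly. Since $X$ is Polish, Skorokhod's representation theorem produces random variables $Y_n$ and $Y$ on a common probability space with laws $\mu_n$ and $\mu$ respectively and with $Y_n \to Y$ almost surely. Continuity of $d$ then yields $d(Y_n, Y) \to 0$ almost surely, boundedness of $X$ yields the uniform bound $d(Y_n, Y) \le \diam(X)$, and the bounded convergence theorem gives $\mathbb{E}[d(Y_n, Y)] \to 0$. The joint law of $(Y_n, Y)$ is a coupling of $\mu_n$ with $\mu$ whose cost equals $\mathbb{E}[d(Y_n, Y)]$, and since this joint law is supported on the finite set $\supp(\mu_n) \times \supp(\mu)$ it corresponds to a matching in the paper's sense, giving $d_W(\mu_n, \mu) \le \mathbb{E}[d(Y_n, Y)] \to 0$.

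The main obstacle, such as it is, is locating the correctly stated forms of Kantorovich--Rubinstein, Portmanteau, and Skorokhod's representation theorem for general Polish metric spaces; once those three facts are in hand, the rest is a short bounded-convergence argument made possible by $\diam(X) < \infty$. This is presumably why the author opts to state the lemma in the form suited to the later sections and to refer the reader to~\cite{villani2003topics} rather than reproduce a proof.
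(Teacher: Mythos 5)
Your proof is correct, and it takes a genuinely different route from the paper's, which gives no argument at all: the paper simply cites Theorem~7.12 of Villani's \emph{Topics in Optimal Transportation}, with Remark~7.13(iii) invoked to dispose of moment hypotheses in the bounded case. Your argument is self-contained modulo three standard facts (Kantorovich--Rubinstein duality, the bounded-Lipschitz form of the Portmanteau theorem, and Skorokhod's representation theorem). The forward direction via duality plus Portmanteau is clean once one notes, as you do, that boundedness of $X$ makes every Lipschitz function bounded; in the converse direction, Skorokhod plus the bounded convergence theorem is sound, and your observation that any coupling of two finitely supported measures is automatically supported on the finite product of their supports, and hence is literally a matching in the paper's combinatorial sense, is exactly what ties the probabilistic coupling back to the paper's definition of $d_W$ as an infimum over matchings. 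What the self-contained argument buys is transparency about precisely where boundedness of $X$ enters (Lipschitz implies bounded, and $\diam(X)$ supplies the dominating constant); what the citation buys is brevity, since the three ingredients you invoke are collectively at least as heavy as the single theorem the paper cites.
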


\begin{proof}
This is a special case of Theorem~7.12 of~\cite{villani2003topics} (the case of bounded metric spaces is mentioned in Remark~7.13(iii) following the theorem).
\end{proof}

We will be interested in the case where $X = S^1$.
The lemma applies since $S^1$ is a Polish space: it is separable and is complete with respect to either the usual Euclidean metric or the geodesic metric, which we describe in Section~\ref{subsection: coordinates on the circle}.

\subsection{Vietoris--Rips Metric Thickenings and Support Homotopies}\label{subsection: vrm and support homotopies}

For $r \in \mathbb{R}$, the traditional Vietoris--Rips simplicial complex $\vrleq{X}{r}$ has all finite subsets of $X$ of diameter at most $r$ as simplices\footnote{We use the $\leq$ convention, which includes simplices that have a diameter of exactly $r$.
Also note that we use the same definition for all $r \in \mathbb{R}$, so that for $r<0$, $\vrleq{X}{r}$ is empty.}.
To define the Vietoris--Rips metric thickening, we replace the points of the simplices in this definition with the corresponding finitely supported probability measures.
That is, we define the \textit{Vietoris--Rips metric thickening} $\vrmleq{X}{r}$ to be the subspace of $\cPfin(X)$ consisting of all measures $\mu$ such that $\supp(\mu)$ has diameter at most $r$.
The topology of $\vrmleq{X}{r}$ is thus the metric topology given by the 1-Wasserstein distance\footnote{If $X$ is a bounded metric space, then any $p$-Wasserstein distance with $p \in [1, \infty)$ gives $\vrmleq{X}{r}$ the same topology: this is discussed (in more generality) in~\cite{vrp_arxiv}.}.
For all $r \geq 0$, the original metric space $X$ is isometrically embedded in $\vrmleq{X}{r}$ by the map $x \mapsto \delta_{x}$.
The corresponding inclusion from $X$ into the simplicial complex $\vrleq{X}{r}$ is not even necessarily continuous, as the vertices of a simplicial complex form a discrete subspace; this is an important difference between the topologies of metric thickenings and simplicial complexes.
We can extend the map $x \mapsto \delta_{x}$ above by taking convex combinations to obtain a bijection $\vrleq{X}{r} \to \vrmleq{X}{r}$, which is in fact continuous.
The inverse $\vrmleq{X}{r} \to \vrleq{X}{r}$, however, is not necessarily continuous ($X$ can be naturally embedded in $\vrmleq{X}{r}$ as above, but this is not necessarily true for $\vrleq{X}{r}$).

We now show that the metric thickening topology allows for a convenient way of constructing homotopies in $\vrmleq{X}{r}$.
In fact, we consider the more general setting of subspaces of $\cPfin(X)$.
The following lemma shows that if $X$ is bounded, then continuously deforming the supports of measures in a subset $U \subseteq \cPfin(X)$ results in a homotopy $U \times I \to X$, as long as all measures remain in $U$ as their supports are deformed.
We will allow the motion of a mass at a point $x$ in $\supp(\mu)$ to depend on both $x$ and $\mu$, so we begin with a homotopy on the subspace $\cQ(X,U) = \{ (x,\mu) \in X \times U \mid x \in \text{supp}(\mu)\} \subseteq X \times U$.  
We define a \textit{support homotopy} in $U$ to be a homotopy $H \colon \cQ(X,U) \times I \to X$ such that for any $t\in I$ and any $\mu = \sum_{i=1}^n a_i \delta_{x_i} \in U$ with $a_i>0$ for each $i$, we have $\sum_{i=1}^n a_i \delta_{H(x_i,\mu,t)} \in U$ (here we require $\mu$ to be written with each $a_i$ positive so that $x_i \in \supp(\mu)$, making $(x_i,\mu,t)$ in the domain of $H$).
The following lemma shows that a support homotopy in $U$ induces a homotopy $\widetilde{H} \colon U \times I \to U$ if $X$ is bounded.
The proof uses ideas similar to the proof of Lemma 5.2 of~\cite{AAF}.

\begin{lem}\label{lemma_support_homotopy}
Let $(X,d)$ be a bounded metric space and let $U \subseteq \cPfin(X)$.  
If $H \colon \cQ(X,U) \times I \to X$ is a support homotopy in $U$, then  $\widetilde{H} \colon U \times I \to U$ given by $\widetilde{H}(\mu,t) = \sum_{i=1}^n a_i \delta_{H(x_i,\mu,t)}$ for $\mu = \sum_{i=1}^n a_i \delta_{x_i}$ with $a_i > 0$ for each $i$, is well-defined and continuous.
\end{lem}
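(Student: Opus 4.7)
The plan is to dispose of well-definedness quickly and spend the real work on continuity. The representation $\mu=\sum_{i=1}^n a_i\delta_{x_i}$ with each $a_i>0$ is unique up to reindexing, so $\widetilde{H}(\mu,t)=\sum_i a_i\delta_{H(x_i,\mu,t)}$ depends only on $(\mu,t)$, and the support-homotopy hypothesis guarantees $\widetilde{H}(\mu,t)\in U$. For continuity at a fixed $(\mu,t)\in U\times I$, I would take an arbitrary sequence $(\mu_k,t_k)\to(\mu,t)$, write $\mu_k=\sum_j b_j^{(k)}\delta_{y_j^{(k)}}$ with positive weights, and fix an optimal matching $\kappa^{(k)}$ from $\mu_k$ to $\mu$, whose cost $c_k:=d_W(\mu_k,\mu)$ tends to $0$. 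Pushing $\kappa^{(k)}$ forward through $H$---that is, transporting mass $\kappa^{(k)}_{ji}$ from $H(y_j^{(k)},\mu_k,t_k)$ to $H(x_i,\mu,t)$---produces a valid matching from $\widetilde{H}(\mu_k,t_k)$ to $\widetilde{H}(\mu,t)$, reducing the problem to bounding
\[
S_k \;:=\; \sum_{i,j}\kappa^{(k)}_{ji}\,d\bigl(H(y_j^{(k)},\mu_k,t_k),\,H(x_i,\mu,t)\bigr).
\]

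To estimate $S_k$, fix $\delta>0$. Using joint continuity of $H$ at each of the finitely many points $(x_i,\mu,t)\in\cQ(X,U)\times I$, I choose pairwise disjoint open balls $V_i$ about $x_i$, a Wasserstein neighborhood $W$ of $\mu$ in $U$, and a neighborhood $J$ of $t$ in $I$ such that $y\in V_i$, $\mu'\in W$, and $t'\in J$ force $d\bigl(H(y,\mu',t'),H(x_i,\mu,t)\bigr)<\delta$. For $k$ large enough that $\mu_k\in W$ and $t_k\in J$, I split the index pairs $(j,i)$ into \emph{good} pairs where $y_j^{(k)}\in V_i$ and \emph{bad} pairs otherwise. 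The good pairs contribute at most $\delta\cdot\sum_{j,i}\kappa^{(k)}_{ji}=\delta$ to $S_k$. For the bad pairs, $y_j^{(k)}$ either lies outside every $V_i$ or lies in $V_{i'}$ with $i'\neq i$, so $d(y_j^{(k)},x_i)\geq\eta$ for some constant $\eta>0$ determined only by the radii of the $V_i$ and the minimum of the pairwise distances among the $x_i$. Since $\sum_{(j,i)}\kappa^{(k)}_{ji}\,d(y_j^{(k)},x_i)=c_k$, a Markov-type bound yields $\sum_{(j,i)\text{ bad}}\kappa^{(k)}_{ji}\leq c_k/\eta$, and boundedness of $X$ bounds their contribution to $S_k$ by $(c_k/\eta)\cdot\diam(X)\to 0$. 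Combining and letting $\delta\to 0$ completes the proof.

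The main obstacle is that the atoms of $\mu_k$ need not correspond one-to-one with atoms of $\mu$: mass can split, merge, or temporarily sit far from every $x_i$, so one cannot match atoms individually and apply pointwise continuity of $H$. The role of $\kappa^{(k)}$ is precisely to route all but a vanishing fraction of $\mu_k$'s mass into the prescribed neighborhoods $V_i$; joint continuity of $H$ on $(V_i\cap\supp,\,W,\,J)$ then controls the dominant good portion of the transport uniformly, while the hypothesis that $X$ is bounded absorbs the vanishing bad portion into a $\diam(X)\cdot(c_k/\eta)$ term that is negligible in the limit.
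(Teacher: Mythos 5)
Your proof is correct and follows essentially the same strategy as the paper's: dispose of well-definedness via uniqueness of the atomic representation, then estimate $d_W(\widetilde{H}(\mu',t'),\widetilde{H}(\mu,t))$ by pushing a near-optimal matching through $H$, splitting transport pairs into ``near'' pairs controlled by joint continuity of $H$ at finitely many points and ``far'' pairs whose total mass is small by a Markov-type bound and whose contribution is absorbed by the boundedness of $X$. The only cosmetic differences are your use of sequences in place of an explicit $\varepsilon$--$\eta$ argument, and your choice of pairwise disjoint balls $V_i$ rather than the paper's single radius $\eta_1$ (both devices yield the same far-pair lower bound on $d(y,x_i)$).
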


\begin{proof}
Up to reordering, there is a unique way to write a measure $\mu \in U \subseteq \cPfin(X)$ as $\mu = \sum_{i=1}^n a_i \delta_{x_i}$ with $a_i > 0$ for each $i$ and with $x_1, \dots, x_n$ distinct.
Note that if $x_1, \dots, x_n$ are not distinct, this does not affect the value of $\widetilde{H}(\mu,t)$, so $\widetilde{H}(\mu,t)$ is uniquely determined for each $(\mu,t) \in U \times I$.
Furthermore, by definition of a support homotopy, $\widetilde{H}$ does in fact send elements of $U \times I$ into $U$, so it is a well-defined function; we must show it is continuous.
Since $X$ is bounded, let $C > 0$ be such that $d(x,y)<C$ for any $x,y \in X$.  
Fix $t\in I$ and $\mu = \sum_{i=1}^n a_i \delta_{x_i} \in U$ with each $a_i > 0$.  
To show continuity of $\widetilde{H}$ at $(\mu,t)$, let $\varepsilon>0$.  
Using continuity of $H$ at the finitely many points $(x_1,\mu,t), \dots, (x_n,\mu,t)$, there exist $\eta_1, \eta_2, \eta_3 > 0$ such that for each $i$, if $(y,\mu',t') \in \cQ(X,U) \times I$ satisfies $d(x_i,y)<\eta_1$, $d_W(\mu,\mu')<\eta_2$, and $|t-t'|<\eta_3$, then $d(H(x_i,\mu,t),H(y,\mu',t'))< \frac{\varepsilon}{2}$.  
We will reduce $\eta_2$ if necessary so that $0<\eta_2<\frac{\varepsilon \eta_1}{2C}$.  

Suppose $(\mu',t') \in U \times I$ satisfies $d_W(\mu,\mu')<\eta_2$ and $|t-t'|<\eta_3$, where $\mu' = \sum_{j=1}^{n'} a'_j \delta_{x'_j}$.  
Then there exists a matching $\{ \kappa_{i,j} \}$ from $\mu$ to $\mu'$ such that $\sum_{i,j} \kappa_{i,j} d(x_i,x'_j) < \eta_2$.  
Let $A = \{ (i,j) \mid d(x_i,x'_j) \geq \eta_1 \}$ and $B = \{ (i,j) \mid d(x_i,x'_j) < \eta_1 \}$.  Then we have 
\[  \sum_{(i,j) \in A} \kappa_{i,j} \leq 
\sum_{(i,j) \in A} \kappa_{i,j} \frac{d(x_i,x'_j)}{\eta_1}
<\frac{\eta_2}{\eta_1}
<\frac{\varepsilon}{2C}.\]
We can use the same $\{ \kappa_{i,j} \}$ to define a matching between the measures $\widetilde{H}(\mu,t) = \sum_{i=1}^n a_i \delta_{H(x_i,\mu,t)}$ and $\widetilde{H}(\mu',t') = \sum_{j=1}^{n'} a'_j \delta_{H(x'_j,\mu',t')}$, and by our choice of $\eta_1$,$\eta_2$, and $\eta_3$, we have 
\begin{align*}
& d_W(\widetilde{H}(\mu,t), \widetilde{H}(\mu',t')) \\
\leq& \sum_{i,j}\kappa_{i,j} d(H(x_i,\mu,t),H(x'_j,\mu',t')) \\
=& \sum_{(i,j) \in A} \kappa_{i,j} d(H(x_i,\mu,t),H(x'_j,\mu',t')) + \sum_{(i,j) \in B} \kappa_{i,j} d(H(x_i,\mu,t),H(x'_j,\mu',t')) \\
<& \sum_{(i,j) \in A} \kappa_{i,j} C + \sum_{(i,j) \in B} \kappa_{i,j} \frac{\varepsilon}{2} \\
<& \frac{\varepsilon}{2C}C + \frac{\varepsilon}{2} \\
=& \varepsilon.
\end{align*}
Therefore $\widetilde{H}$ is continuous at $(\mu,t)$.
\end{proof}

\subsection{Coordinates on the Circle}\label{subsection: coordinates on the circle}

We now describe some conventions for our work with the circle.
The straightforward techniques here will be used in detail later.
We give the circle $S^1$ the geodesic metric, written $d_{S^1}$, which assigns to two points the arc length of the shorter arc between them.
We will typically use an angle enclosed in square brackets to indicate a point on the circle: that is, $[\theta] = (\cos(\theta), \sin(\theta))$.
The square brackets can be thought of as denoting equivalence classes of points identified by the map $\mathbb{R} \to S^1$ given by $\theta \mapsto (\cos(\theta), \sin(\theta))$.
Thus, $[\theta_1] = [\theta_2]$ if and only if $\theta_1-\theta_2$ is an integer multiple of $2 \pi$.
We can then describe the distance between two points easily: without loss of generality, two points can be written as $[\theta_1]$ and $[\theta_2]$ with $\theta_1 \leq \theta_2 \leq \theta_1+\pi$, and the distance between them is given by $d_{S^1}([\theta_1],[\theta_2]) = \theta_2-\theta_1$.

It will be convenient to identify the circle minus a point with an open interval of length $2 \pi$ on the real line in a way that preserves distances locally.
For any angle $\theta_0 \in \mathbb{R}$ and any chosen point $y_0 \in \mathbb{R}$, we can make this identification by a coordinate system $x \colon S^1 - \{ [\theta_0] \} \to \mathbb{R}$ defined by 
\[
x([\theta]) = y_0 + \theta - \theta_0,
\]
where the representative $\theta$ for $[\theta]$ is taken in the interval $(\theta_0, \theta_0+2\pi)$.
The image of $x$ is thus $(y_0, y_0 + 2\pi)$.
We will describe such an $x$ as a \textit{coordinate system that excludes $[\theta_0]$}.
The $2\pi$-periodic function $\tau \colon \mathbb{R} \to S^1$ given by
\[
\tau(z) = (\cos(z+\theta_0 -y_0),\, \sin(z+\theta_0 -y_0))
\]
is a left inverse for $x$, that is, $\tau \circ x ([\theta]) = [\theta]$ for $[\theta]\neq[\theta_0]$.
Composing in the other direction, we have $x \circ \tau(z) = z$ for $z \in (y_0, y_0 + 2\pi)$.
We note that $\tau$ preserves distances that are at most $\pi$.
In general, we have $d_{S^1}(\tau(z_1), \tau(z_2)) \leq d_{\mathbb{R}}(z_1,z_2)$, that is, $\tau$ is 1-Lipschitz.
We will only use coordinate systems defined as $x$ is above, and we will also call $(x,\tau)$ a coordinate system to indicate that $\tau$ is the $2 \pi$-periodic left inverse for $x$.

We can convert between the coordinate system $x$ and another, $x' \colon S^1 - \{ [\theta'_0] \} \to \mathbb{R}$, defined by 
\[
x'([\theta]) = y'_0 + \theta - \theta'_0,
\]
where the representative $\theta$ for $[\theta]$ is taken in the interval $(\theta'_0, \theta'_0+2\pi)$.
We will assume without loss of generality that $\theta_0 \leq \theta'_0 < \theta_0 + 2\pi$.
These coordinate systems are related as follows:
\[
x([\theta]) - x'([\theta]) = \begin{cases}
(y_0-y_0')+(\theta'_0 - \theta_0) - 2\pi & \text{ if $\theta_0 < \theta < \theta'_0$} \\
(y_0-y_0')+ (\theta'_0 - \theta_0) & \text{ if $\theta'_0 < \theta < \theta_0 + 2\pi$}, \\
\end{cases}
\]
where here we choose the representative $\theta$ for $[\theta]$ from the intervals $(\theta_0, \theta'_0)$ or $(\theta'_0, \theta_0 + 2\pi)$.
If $\tau$ and $\tau'$ are the periodic left inverses, then we must have
\[
\tau(z) = \tau'(z-(y_0-y_0')-(\theta'_0-\theta_0)).
\]

\subsection{The Vietoris--Rips Metric Thickenings of the Circle}
Our main goal is to find the homotopy type of $\vrmleq{S^1}{r}$ for each $r$.
The homotopy types and the persistent homology barcodes of the Vietoris--Rips complexes of the circle were found in~\cite{AA_circle}.
Results in~\cite{Moy_Masters_Thesis} and~\cite{vrp_arxiv} show that the barcodes of the Vietoris--Rips metric thickenings of the circle agree, ignoring whether endpoints of bars are open or closed.
Combining these results shows that Vietoris--Rips metric thickenings have one persistent homology bar in each odd dimension $2k+1$, with endpoints $\frac{2k \pi}{2k+1}$ and $\frac{(2k+2)\pi}{2k+3}$.

We will show that the Vietoris--Rips metric thickenings of the circle have homotopy types of odd dimensional spheres in these intervals, as the barcodes suggest.
Lemma~\ref{lemma_support_homotopy} will be a key tool: it will allow us to define homotopies in $\vrmleq{S^1}{r}$ by sliding support points along the circle.
We use this technique to collapse the metric thickening down to the CW complex described in the introduction, which we show has the same homotopy type.
The construction of this collapse will depend on properties specific to the Vietoris--Rips metric thickenings of the circle, explored in the next section.

\vspace{.15cm}
\section{Odd numbers of arcs on the circle}\label{section:arcs}

A surprising amount of the structure of Vietoris--Rips metric thickenings of the circle will depend on the following simple observation.
Consider $n$ distinct pairs of antipodal points on the circle, with one of each pair colored blue and the other red.
Given such a set of red and blue points on the circle, consider the maximal length open arcs on the circle containing at least one blue point and no red points.
We can show by induction that there will always be an odd number of these maximal blue arcs.
There is one arc for $n=1$ or $n=2$, and adding a new pair changes the number of arcs only if the blue point is placed between two consecutive red points.
This introduces a new blue arc, but it also splits a previous blue arc, since the antipodal red point was placed between two consecutive blue points.
Therefore, each new antipodal pair introduced increases the number of maximal blue arcs by either 0 or 2.

We find similar behavior in finite subsets of $S^1$ with constrained diameter.
Let $r \in [0, \pi)$, and consider a nonempty set $\Theta = \{ [\theta_0], \dots , [\theta_n] \} \subset S^1$ with $\diam(\Theta) \leq r$.
Since $\Theta$ cannot contain a pair of antipodal points, we may color the points in $\Theta$ blue and the points opposite them red, obtaining the situation described above.
Furthermore, for any $[\theta_i] \in \Theta$, the open interval of length $2(\pi -r)$ opposite $[\theta_i]$ does not contain any other point in $\Theta$; we call a point in any such interval \textit{excluded by $\Theta$}.
Let a \textit{$(\Theta;r)$-arc} be a closed arc of maximal length such that there is at least one point of $\Theta$ contained in the arc and no point excluded by $\Theta$ is contained in the arc. 
We allow the case where a $(\Theta;r)$-arc consists of an individual point.
This simply shrinks the blue arcs described in the case of antipodal pairs, so the number of $(\Theta;r)$-arcs is still odd.
Let $\arcs_r(\Theta)$ be the number of $(\Theta;r)$-arcs.

If $\mu \in \vrmleq{S^1}{r}$, then by definition $\diam(\supp(\mu)) \leq r$, so the definitions above may be applied with $\Theta = \supp(\mu)$.
In this case we will call a $(\supp(\mu),r)$-arc a \textit{$(\mu,r)$-arc}, and will write $\arcs_r(\mu)$ for $\arcs_r(\supp(\mu))$.
For any $[\theta]$ in $\supp(\mu)$, we call any point in the open interval of length $2(\pi - r)$ opposite $[\theta]$ a point \textit{excluded by $\mu$} (note that this definition depends on the parameter $r$ -- we will use this term when $r$ is understood).
The set of all points excluded by $\mu$ may be called the \textit{excluded region} of $\mu$; this is the set of points that are at distance greater than $r$ from some point in $\supp(\mu)$.
Thus, a $(\mu,r)$-arc is a closed arc of maximal length such that there is at least one point in $\supp(\mu)$ contained in the arc and no point excluded by $\mu$ is contained in the arc. 
Each point in $\supp(\mu)$ is contained in exactly one $(\mu,r)$-arc, and as above, the number of $(\mu,r)$-arcs is odd.
Note that $(\mu,r)$-arcs are defined entirely in terms of $\supp(\mu)$, so if $\mu$ and $\mu'$ have the same support, then the $(\mu,r)$-arcs agree with the $(\mu',r)$-arcs.

\begin{figure}
    \centering
    \includegraphics[width = .4\textwidth]{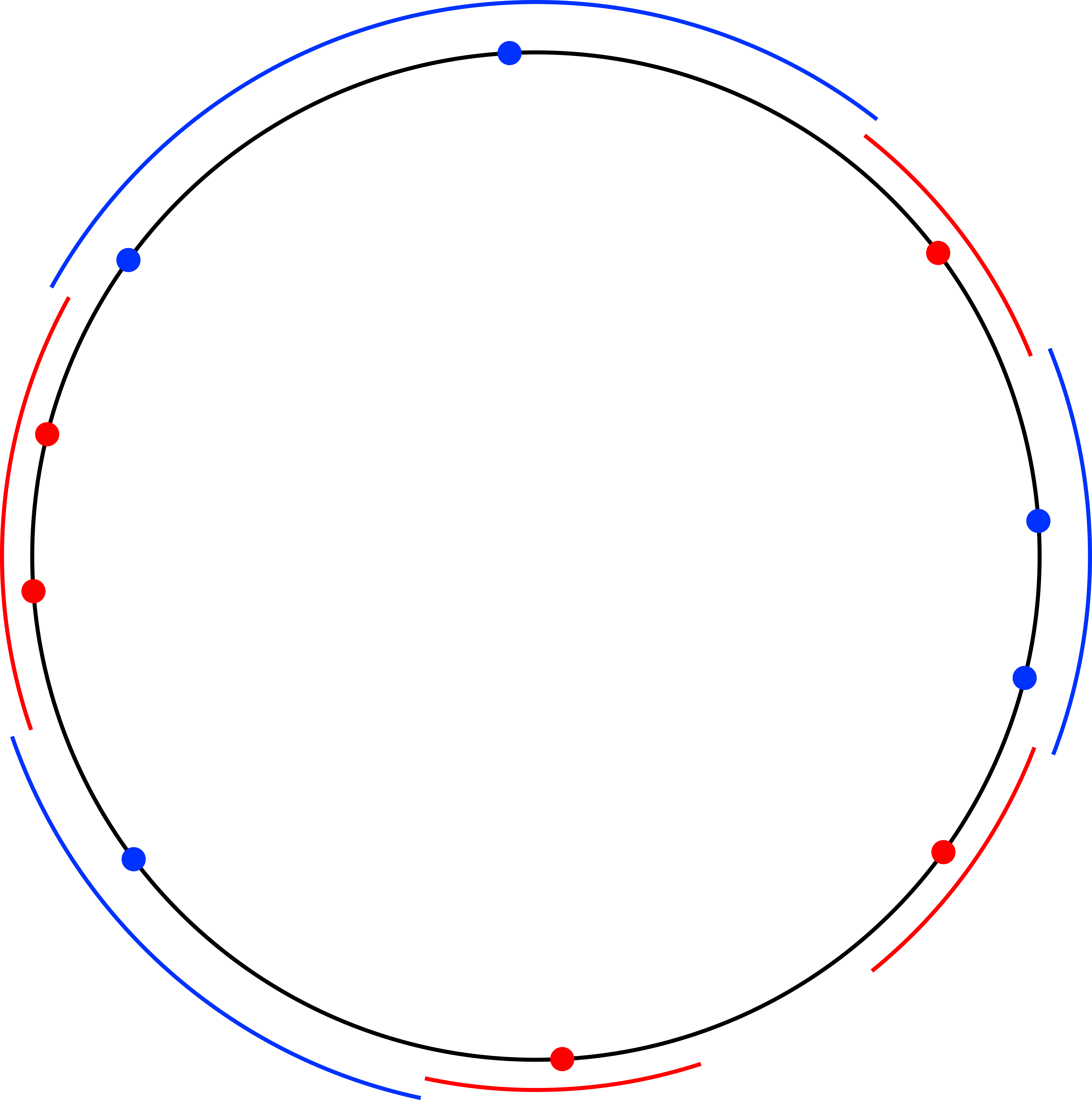}
    \caption{Visualization of $\Theta$-arcs and points excluded by $\Theta$.
    The blue points are points of $\Theta$ and the red points are the points opposite them.
    The blue arcs show the $\Theta$-arcs and the red arcs are excluded by $\Theta$.}
    \label{figure:red_and_blue_points}
\end{figure}

For any $k \geq 0$, let $V_{2k+1}(r)$ be the set of all measures $\mu \in \vrmleq{S^1}{r}$ that have exactly $2k+1$ $(\mu,r)$-arcs, and let $W_{2k+1}(r) = \bigcup_{l=0}^k V_{2l+1}(r)$ be the set of measures $\mu$ with at most $2k+1$ $(\mu,r)$-arcs.
For convenience, we will let $V_{-1}(r)$ and $W_{-1}(r)$ be empty.
By definition, the $V_{2k+1}(r)$ are disjoint, and their union over all $k \geq 0$ is $\vrmleq{S^1}{r}$.
We will mostly work with a fixed parameter $r$ and will often suppress the $r$ from the notation. 
In particular, we will often write the sets of measures above as $\vrmcircleq$, $V_{2k+1}$, and $W_{2k+1}$, and we will use the terms \textit{$\Theta$-arc} and \textit{$\mu$-arc} when $r$ is fixed or understood from context.

For $r \in [0,\pi)$, the region excluded by a point in the support of a measure has length $2(\pi-r)>0$, so there is a maximum number of arcs a measure in $\vrmleq{S^1}{r}$ can have.
Thus, $V_{2k+1}(r)$ is empty for all sufficiently large $k$. 
From here on, we let $K = K(r)$ be the largest value of $k$ such that $V_{2k+1}(r)$ is nonempty; then $\vrmleq{S^1}{r} = V_1(r) \cup \dots \cup V_{2K+1}(r) = W_{2K+1}(r)$.
To find $K$, note that in order for a measure $\mu$ to have $2k+1$ arcs, the set of points excluded by $\mu$ must be split into $2k+1$ connected components.
Since the open arc of length $2(\pi-r)$ opposite any point of $\supp(\mu)$ is excluded, this can only happen if $2(\pi-r)(2k+1) \leq 2\pi$, or equivalently $r \geq \frac{2k \pi}{2k+1}$.
Conversely, if $r \geq \frac{2k \pi}{2k+1}$, then any measure with support equal to a set of $2k+1$ evenly spaced points has diameter $\frac{2k \pi}{2k+1}$ and is thus in $V_{2k+1}(r)$.
This shows $V_{2k+1}(r)$ is nonempty if and only if $r \geq \frac{2k \pi}{2k+1}$.

We summarize the properties described above in the following proposition.

\begin{prop}\label{prop:facts_about_arcs_and_V_2k+1}
Let $r \in [0, \pi)$.
For any $\mu \in \vrmleq{S^1}{r}$, there are an odd number of $(\mu,r)$-arcs.
$V_{2k+1}(r)$ is nonempty if and only if $r \geq \frac{2k \pi}{2k+1}$, so $K = K(r)$ is the unique integer such that $\frac{2K \pi}{2K+1} \leq r < \frac{(2K+2)\pi}{2K+3}$. 
Thus, $V_1(r), V_3(r), \dots, V_{2K+1}(r)$ partition $\vrmleq{S^1}{r}$, and $\vrmleq{S^1}{r} = W_{2K+1}(r)$.
\end{prop}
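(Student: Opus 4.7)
The plan is to prove the three claims in order, following the sketches already given in the text. For the parity statement, I would induct on $n = |\supp(\mu)|$. The base case $n = 1$ is immediate: the single excluded open arc of length $2(\pi-r)$ leaves a single closed arc of length $2r$ containing the lone support point, giving one $(\mu, r)$-arc. For the inductive step, it is cleanest to argue first in the ``antipodal'' regime in which each excluded open arc is replaced by its center (an antipode of a support point); the $(\Theta; r)$-arcs become complementary components of $S^1 \setminus \{\text{antipodes of } \Theta\}$ that contain some support point. Adding a new support point $[\theta_n]$ to $\Theta' = \supp(\mu) \setminus \{[\theta_n]\}$ yields one of two scenarios: (a) $[\theta_n]$ lies in an existing $(\Theta';r)$-arc, and by antipodal symmetry its antipode lies in a complementary component already containing no $\Theta'$-point, so the net change in the arc count is $0$; or (b) $[\theta_n]$ lies in a complementary component with no $\Theta'$-point, and its antipode splits an existing $(\Theta';r)$-arc into two sub-arcs each still containing a $\Theta'$-point, so the net change is $+2$. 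The transfer to the full $r$-thickened setting uses that replacing each antipode by an open arc of length $2(\pi - r)$ only shrinks the existing arcs around their support points and never merges two of them or absorbs any, since $\diam(\Theta) \leq r < \pi$ keeps each support point at distance at least $\pi - r$ from every antipode.

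For the nonemptiness criterion, the sufficient direction is witnessed by $\mu_k = \frac{1}{2k+1}\sum_{j=0}^{2k}\delta_{[2\pi j/(2k+1)]}$: its support has diameter $\frac{2k\pi}{2k+1} \leq r$, so $\mu_k \in \vrmleq{S^1}{r}$, and a direct count yields $2k+1$ $(\mu_k, r)$-arcs. For the necessary direction, if $\mu \in V_{2k+1}(r)$ then the excluded region has $2k+1$ connected components (one between each pair of consecutive $(\mu, r)$-arcs), and each such component has length at least $2(\pi - r)$ since it contains at least one of the open excluded arcs of that length; the lengths sum to at most $2\pi$, forcing $2(\pi - r)(2k+1) \leq 2\pi$, i.e., $r \geq \frac{2k\pi}{2k+1}$. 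The partition and identification of $K$ are then immediate: the $V_{2k+1}(r)$ are pairwise disjoint by arc count and exhaust $\vrmleq{S^1}{r}$ by the parity statement, and strict monotonicity of $k \mapsto \frac{2k\pi}{2k+1}$ combined with the nonemptiness criterion selects the unique $K$ with $\frac{2K\pi}{2K+1} \leq r < \frac{(2K+2)\pi}{2K+3}$, after which the union reduces to $V_1(r) \cup \cdots \cup V_{2K+1}(r) = W_{2K+1}(r)$.

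The main obstacle is the parity inductive step. The naive four-case enumeration, according to whether $[\theta_n]$ and its antipode each fall in an existing $(\Theta';r)$-arc or in a $\Theta'$-free complementary component, initially admits a possible $\pm 1$ change of the arc count; ruling this out in the two problematic cases requires using antipodal symmetry together with $\diam(\Theta) \leq r$ to track which combinations of placements for $[\theta_n]$ and $[\theta_n] + \pi$ are simultaneously realizable, and to verify that whenever $[\theta_n]$ sits inside an existing $(\Theta';r)$-arc, the position of $[\theta_n] + \pi$ cannot split another $(\Theta';r)$-arc into two sub-arcs both carrying a $\Theta'$-point.
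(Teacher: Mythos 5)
Your overall strategy---prove the parity by induction in an ``antipodal'' model and then transfer by shrinking, establish nonemptiness via a regular polygon witness and a length count, then read off the partition and $K$---mirrors the paper's sketch. But scenario~(a) of your inductive step asserts a false claim, and you flag but do not close the resulting gap. You say that when $[\theta_n]$ lands in an existing $(\Theta';r)$-arc, ``by antipodal symmetry its antipode lies in a complementary component already containing no $\Theta'$-point.'' That is not so: take $\Theta' = \{[0],[2\pi/3],[4\pi/3]\}$ and $[\theta_n]=[\pi/4]$, which lies in the arc around $[0]$; the antipode $[5\pi/4]$ lies in the arc containing $[4\pi/3]$. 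Your closing paragraph concedes that the cases where the antipode lands in an existing arc are ``the main obstacle'' and remain unresolved; that is exactly where the proof is incomplete, so as written the inductive step does not go through.

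The observation that closes the gap is short. For any component $A$ of $S^1 \setminus \{\text{antipodes of }\Theta'\}$, the interior of the opposite arc $A+\pi$ contains no point of $\Theta'$, since such a point would put its antipode inside $A$. Now suppose $[\theta_n]\in A$ with $A$ a $(\Theta';r)$-arc. Since $A$ already contains some point of $\Theta'$, on at least one side of $[\theta_n]$ inside $A$ there is a $\Theta'$-point whose antipode sits strictly between $[\theta_n]+\pi$ and the corresponding endpoint of $A+\pi$. The component of $[\theta_n]+\pi$ therefore cannot reach past that endpoint, and the sub-arc created on that side of $[\theta_n]+\pi$ lies in the interior of $A+\pi$, hence contains no $\Theta'$-point; only the other sub-arc can be a $(\Theta';r)$-arc, so the count is unchanged. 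The same observation handles case~(b) cleanly: if $[\theta_n]$ lies in a $\Theta'$-free component bounded by $c_1+\pi$, $c_2+\pi$ with $c_1,c_2\in\Theta'$, then $(c_1,c_2)$ is antipode-free, so $[\theta_n]+\pi$ necessarily lands in a $(\Theta';r)$-arc and splits it between $c_1$ and $c_2$, giving $+2$; in particular the new point and its antipode can never both land in $\Theta'$-free components. This collapses your four-case enumeration to exactly (a) with change $0$ and (b) with change $+2$. The remainder of your proposal---base case, the polygon witness, the bound $2(\pi-r)(2k+1)\le 2\pi$, the transfer by shrinking, and the partition argument---matches the paper.
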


The following lemma will allow us to determine when a measure $\mu$ is in $\vrmcircleq - W_{2k-1}$ (that is, when $\mu$ has at least $2k+1$ arcs).
We will write it in the general setting of finite subsets of $S^1$.

\begin{lem}\label{lemma:red_blue_points_and_mu_arcs}

Let $r \in [0, \pi)$ and let $\Theta$ be a nonempty finite set of points in $S^1$ with $\diam(\Theta) \leq r$.
There are at least $2k+1$ distinct $(\Theta,r)$-arcs if there exist distinct points $[\theta_0], \dots, [\theta_{2k}] \in \Theta$ such that if the $[\theta_i]$ are colored blue and their antipodal points are colored red, the red and blue points alternate around the circle.
Conversely, if there are exactly $2k+1$ $(\Theta,r)$-arcs, then if $[\theta_0], \dots, [\theta_{2k}]$ are points in $\Theta$, each contained in a distinct $(\Theta,r)$-arc, then coloring the $[\theta_i]$ blue and their antipodal points red results in red and blue points that alternate around the circle.
\end{lem}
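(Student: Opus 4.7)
The plan is to handle the two implications separately. For the forward direction, I would show each pair $[\theta_i], [\theta_j]$ of the chosen blues lies in distinct components of the non-excluded region, hence in distinct $(\Theta, r)$-arcs. By the alternating pattern, any path on $S^1$ from $[\theta_i]$ to $[\theta_j]$, in either direction, must pass through at least one chosen antipodal red $[\theta_l + \pi]$, since between any two distinct blues in a cyclic alternating pattern there is at least one red in each direction. That point lies at the center of the nonempty open excluded arc of length $2(\pi - r)$ around $[\theta_l + \pi]$ (nonempty since $r < \pi$), so the path enters the excluded region. This yields at least $2k+1$ distinct $(\Theta, r)$-arcs.

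For the converse, I would use the identification, sketched at the start of the section, between $\arcs_r(\Theta)$ and the number of ``blue arcs'' of the antipodal configuration $\Theta \cup \{[\theta + \pi] : [\theta] \in \Theta\}$ -- that is, maximal open arcs on $S^1$ containing at least one point of $\Theta$ and no antipodal red. Let $\Theta' = \{[\theta_0], \ldots, [\theta_{2k}]\}$. The goal is to show $\Theta'$ on its own has $2k+1$ blue arcs in its own antipodal configuration; since $|\Theta'| = 2k+1$ matches this count and each blue arc contains at least one blue, each blue arc must then contain exactly one blue, which is equivalent to the alternating pattern.

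To reach this, I would remove the points of $\Theta \setminus \Theta'$ (together with their antipodes) from $\Theta$ sequentially, tracking the blue arc count at each step. By the inductive rule from the section introduction, each such pair-removal changes the count by $0$ or $-2$, with $-2$ occurring precisely when the removed blue is alone in its blue arc. I would show every removal is of the $0$ type: when $[\theta] \in \Theta \setminus \Theta'$ is to be removed from the current set $\Theta^{(i-1)}$, $[\theta]$ lies in some $(\Theta^{(i-1)}, r)$-arc $\tilde{I}$, and because removing points only shrinks the excluded region and hence enlarges arcs, $\tilde{I}$ contains the original $(\Theta, r)$-arc $I_m \ni [\theta]$ along with its chosen representative $[\theta_m] \in \Theta'$, which is never removed. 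So $[\theta]$ shares its blue arc with $[\theta_m]$ and is not alone. After all removals, $\Theta'$ retains $2k+1$ blue arcs, yielding the alternating pattern.

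The main obstacle will be making the correspondence between $(\Theta, r)$-arcs and blue arcs rigorous throughout the intermediate stages, and carefully verifying the $0$/$-2$ dichotomy for pair-removal via the antipodal duality that identifies ``blue alone in its blue arc'' with ``red alone in its red arc''. A key sub-step is confirming that the ``shrink'' of any blue arc to a $(\Theta, r)$-arc is never empty, which follows from $\diam(\Theta) \leq r$ implying that every blue arc containing a point of $\Theta$ has length at least $2(\pi - r)$.
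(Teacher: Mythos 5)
Your forward direction is essentially the same argument as the paper's: between any two of the chosen blues there is a red in each direction, each red lies in an open excluded arc of positive length, so the chosen blues fall into distinct $(\Theta,r)$-arcs. For the converse you take a genuinely different route. The paper argues directly: given two chosen blues $[\theta_i],[\theta_j]$ in distinct arcs with $\theta_i<\theta_j<\theta_i+\pi$, it uses the excluded gap between the arcs to locate a support point $[\theta']$ in the range $\big[\theta_i+\pi+(\pi-r),\,\theta_j+\pi-(\pi-r)\big]$, then appeals to the \emph{exact} count $2k+1$ to pick up a chosen blue $[\theta_l]$ in $[\theta']$'s arc and deduces $[\theta_l-\pi]$ is a red in $(\theta_i,\theta_j)$. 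You instead strip $\Theta$ down to $\Theta'=\{[\theta_0],\dots,[\theta_{2k}]\}$ one antipodal pair at a time, tracking the blue-arc count using the $0/{-2}$ removal rule (the paper's addition induction run backwards) together with the observation that removable points always share an arc with a never-removed representative (because arcs only grow under deletion); you then finish with a pigeonhole. Both are correct. The paper's argument is more local and self-contained, while yours modularizes the hypothesis into the inductive invariant of Section~3 and makes the role of the representatives conceptually transparent. To make your version airtight you will need to (i) establish the shrink/expand bijection between blue arcs and $(\Theta,r)$-arcs rigorously, which you rightly flag, and (ii) prove the $0/{-2}$ dichotomy carefully, including the characterization that a removal is $-2$ exactly when the removed blue is alone — the antipodal $\pi$-rotation symmetry is the tool, and there is a degenerate-looking edge case (the removed blue and its red lying in adjacent complementary components) that needs checking but ultimately forces $\Delta=0$ and cannot produce an odd change. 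Neither of these is a gap in the outline; they are just the details you correctly identified as the ``main obstacle.''
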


\begin{proof}
Both statements are trivially true if $k=0$, so we suppose $k\geq1$.
Suppose first that there exist distinct points $[\theta_0], \dots, [\theta_{2k}] \in \Theta$ such that if the $[\theta_i]$ are colored blue and their antipodal points are colored red, the red and blue points alternate.
Each blue point belongs to a unique $\Theta$-arc, and each red point is excluded by $\Theta$. 
Since the red and blue points alternate, for any pair of blue points, there is a red point on each of the two arcs between the blue points.
Therefore the blue points must be contained in distinct $\Theta$-arcs, so there are at least $2k+1$ distinct $\Theta$-arcs.

To prove the second statement, suppose there are exactly $2k+1$ $(\Theta,r)$-arcs (note that if $k \geq 1$ and there are $2k+1$ $(\Theta,r)$-arcs, we must have $r \geq \frac{2k \pi}{2k+1} \geq \frac{2 \pi}{3}$).
Let $[\theta_0], \dots, [\theta_{2k}] \in \Theta$ with one $[\theta_i]$ in each $\Theta$-arc.
Color the $[\theta_i]$ blue and the points opposite them red.
We show there must be a red point on any arc between any distinct blue points $[\theta_i]$ and $[\theta_j]$; without loss of generality, we assume $\theta_i<\theta_j<\theta_i+2\pi$ and show there is a $\theta \in (\theta_i,\theta_j)$ such that $[\theta]$ is a red point.
If $\theta_i + \pi < \theta_j$, then $[\theta_i+\pi]$ is a red point and $\theta_i < \theta_i+\pi < \theta_j$, so now consider the case where $\theta_i < \theta_j < \theta_i + \pi$.
Since $[\theta_i]$ and $[\theta_j]$ belong to different $\Theta$-arcs, there is a point excluded by $\Theta$ that can be represented by an angle in $(\theta_i,\theta_j)$, so $\theta_j \geq \theta_i + 2(\pi-r)$ and there is a $\theta' \in \big[\theta_i+\pi+(\pi-r), \theta_j+\pi-(\pi-r)\big]$ such that $[\theta'] \in \Theta$.
Since $[\theta']$ belongs to some $\Theta$-arc, there must be a blue point $[\theta_k]$ in this $\Theta$-arc, and without loss of generality, we can choose the representative $\theta_k$ such that $\theta_k \in \big[\theta_i+\pi+(\pi-r), \theta_j+\pi-(\pi-r)\big]$.
Then $[\theta_k - \pi]$ is a red point and $\theta_i < \theta_k - \pi < \theta_j$.
Therefore there must be a red point on any arc between distinct blue points, so the red and blue points alternate around the circle.
\end{proof}

The somewhat combinatorial nature of the definition of $V_{2k+1}$ and $W_{2k+1}$ leads to interesting topological properties.
In general, the $V_{2k+1}$ are neither open nor closed, as shown in the following example.
However, we will see soon that each $W_{2k+1}$ is closed, and we will provide a description of the closure of $V_{2k+1}$ in $\vrmcircleq$.

\vspace{.2cm}
\begin{example}\label{example:V2k+1_not_necessarily_open_or_closed}
For any $k \geq 1$, suppose $r \in [0,\pi)$ is large enough so that $V_{2k+1}(r)$ is nonempty.
Then $V_{2k+1}$ contains measures supported on $2k+1$ evenly spaced points, and we can define a sequence with a predictable limit by varying the mass placed at these points: define the sequence $\{ \mu_n \}_{n \geq 1}$ by
\[
\mu_n = \tfrac{1}{n} \delta_{[0]} + \sum_{j=1}^{2k}\left( \tfrac{1}{2k} - \tfrac{1}{2kn} \right) \delta_{[\frac{2j \pi}{2k+1}]}.
\]
For each $n \geq 2$, $\mu_n$ is in $V_{2k+1}$, since it has nonzero mass at each of the $2k+1$ regularly spaced points.
On the other hand, the sequence converges to $\mu = \sum_{j=1}^{2k} \tfrac{1}{2k} \delta_{[\frac{2j \pi }{2k+1}]}$, which is in $V_{2k-1}$ because $[\frac{2k \pi}{2k+1}]$ and $[\frac{2(k+1) \pi}{2k+1}]$ are in the same $\mu$-arc (since $0$ is not in $\supp(\mu)$).
Thus, $\{\mu_n\}_{n \geq 2}$ is a sequence in $V_{2k+1}$ that converges to a measure in $V_{2k-1}$.
This shows $V_{2k-1}$ is not open in $\vrmcircleq$ and $V_{2k+1}$ is not closed in $\vrmcircleq$.
Since this example applies whenever $1 \leq k \leq K$, we see $V_{2k+1}$ is neither open nor closed if $1 \leq k \leq K-1$.
If $K > 0$, then $V_1$ is not open and $V_{2K+1}$ is not closed.
We will see soon that $V_1$ is always closed and $V_{2K+1}$ is always open.

\end{example}
\vspace{.3cm}

The following lemma shows that all measures sufficiently close to a fixed measure $\mu$ have certain properties determined by $\mu$.

\begin{lem}\label{lemma:close_measures_implies_close_masses}

Let $k \geq 0$ and $r \in [0, \pi)$, and suppose $\mu \in V_{2k+1}(r)$.
For all $\varepsilon > 0$, there exists an $\eta > 0$ such that the following statements hold for all $\nu \in \vrmleq{S^1}{r}$ satisfying $d_W(\mu,\nu)<\eta$.
\begin{enumerate}

    \item\label{lemma_item_support_points} For any $[\theta] \in \supp(\mu)$, there is a $[\theta'] \in \supp(\nu)$ such that $d_{S^1}([\theta],[\theta']) < \varepsilon$.
    
    \item\label{lemma_item_number_of_arcs} $\nu \in \vrmleq{S^1}{r} - W_{2k-1}(r)$, that is, $\nu$ has at least $2k+1$ arcs.
    
    \item\label{lemma_item_close_arc_masses} If $A_0, A_1, \dots, A_{2l}$ are all the $\nu$-arcs, then for each $i$, define the closed arc $A'_i$ by expanding $A_i$ by $\frac{\pi-r}{2}$ on both sides.
    Then $\nu(A'_i) = \nu(A_i)$ for each $i$, the arcs $A'_0, A'_1, \dots, A'_{2l}$ are disjoint, $\supp(\mu) \subseteq \bigcup_i A'_i$, and $|\mu(A'_i) - \nu(A'_i)| < \varepsilon$ for all $i$.
    
\end{enumerate}
\end{lem}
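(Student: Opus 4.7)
The plan is to choose a single $\eta > 0$ small enough to meet thresholds coming from each of the three claims; throughout I will exploit the geometric rigidity established in Section~\ref{section:arcs}, namely that each point of $\supp(\nu)$ contributes an open excluded arc of length $2(\pi - r)$. For part (\ref{lemma_item_support_points}), I would write $\mu = \sum_{i=1}^n a_i \delta_{[\theta_i]}$ with all $a_i > 0$ and set $a_{\min} = \min_i a_i$. If some $[\theta_i]$ had no point of $\supp(\nu)$ within $\varepsilon$, then any matching must transport the mass $a_i$ at $[\theta_i]$ across a distance of at least $\varepsilon$, forcing $d_W(\mu,\nu) \geq a_{\min} \varepsilon$; so $\eta < a_{\min} \varepsilon$ suffices. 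For part (\ref{lemma_item_number_of_arcs}), pick a representative $[\theta_i]$ from each of the $2k+1$ $\mu$-arcs; by the converse direction of Lemma~\ref{lemma:red_blue_points_and_mu_arcs}, coloring these blue and their antipodes red gives an alternating pattern. Let $2\delta > 0$ be the minimum angular gap between consecutive colored points. Applying part (\ref{lemma_item_support_points}) with $\delta$ in place of $\varepsilon$ produces $[\theta_i'] \in \supp(\nu)$ within $\delta$ of each $[\theta_i]$, and since the antipodal map is an isometry the perturbed colors still alternate; the forward direction of Lemma~\ref{lemma:red_blue_points_and_mu_arcs} applied to $\nu$ then gives $\arcs_r(\nu) \geq 2k+1$, i.e.\ $\nu \notin W_{2k-1}(r)$.

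For the geometric setup of part (\ref{lemma_item_close_arc_masses}), the key observation is that the excluded region of $\nu$ is a union of open arcs of length $2(\pi-r)$, so every connected component of it has length at least $2(\pi-r)$. Since the $\nu$-arcs $A_0, \dots, A_{2l}$ are exactly the connected components of the complement of this excluded region, adjacent $\nu$-arcs are separated by gaps of at least $2(\pi-r)$. Expanding each arc by $(\pi-r)/2$ on each side therefore yields $A_i'$ still separated by gaps of at least $\pi-r > 0$, so the $A_i'$ are pairwise disjoint and each $A_i'$ meets no $A_j$ for $j \neq i$. Combined with $\supp(\nu) \subseteq \bigcup_j A_j$, this gives $\nu(A_i') = \nu(A_i)$. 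Requiring the $\varepsilon$ in part (\ref{lemma_item_support_points}) to also be smaller than $(\pi-r)/2$ guarantees that every point of $\supp(\mu)$ lies within $(\pi-r)/2$ of some point of $\supp(\nu)$, hence in some $A_i'$, so $\supp(\mu) \subseteq \bigcup_i A_i'$.

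For the mass bound $|\mu(A_i') - \nu(A_i')| < \varepsilon$, I would fix an optimal matching $\kappa$ from $\mu$ to $\nu$ with $\cost(\kappa) < \eta$ and let $m_{ij}$ be the total $\kappa$-mass transported from $A_i'$ (within $\mu$) into $A_j$ (within $\nu$). The two support containments give $\sum_j m_{ij} = \mu(A_i')$ and $\sum_i m_{ij} = \nu(A_j) = \nu(A_j')$, so $\mu(A_i') - \nu(A_i') = \sum_{j \neq i}(m_{ij} - m_{ji})$. For $j \neq i$, transport from $A_i'$ to $A_j$ must cross a distance of at least $(\pi-r)/2$, so the total off-diagonal $\kappa$-mass is at most $\tfrac{2\eta}{\pi-r}$, bounding $|\mu(A_i') - \nu(A_i')|$ by $\tfrac{4\eta}{\pi-r}$. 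Choosing $\eta$ smaller than $(\pi-r)\varepsilon/4$ together with the thresholds from the earlier parts satisfies all three claims simultaneously. The main obstacle is this final step: the gap of $2(\pi-r)$ between adjacent $\nu$-arcs must play a dual role, both forcing the $A_i'$ to remain disjoint and converting the Wasserstein bound on $d_W(\mu,\nu)$ into a pointwise comparison of the arc masses $\mu(A_i')$ and $\nu(A_i')$, which is exactly what the geometry of excluded regions delivers.
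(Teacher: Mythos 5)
Your proof is correct and follows essentially the same approach as the paper for all three parts: the minimum-mass argument for (1), perturbing the alternating blue/red coloring to a green/orange one and invoking both directions of Lemma~\ref{lemma:red_blue_points_and_mu_arcs} for (2), and bounding the cost of mass crossing between the well-separated expanded arcs for (3). The only slip is a loose constant in (3): the net imbalance $|\mu(A_i') - \nu(A_i')|$ is bounded by the total off-diagonal transported mass itself (not twice it), so your factor of $4$ is unnecessarily conservative, but this merely forces a smaller $\eta$ than needed and does not affect correctness.
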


The length of $\frac{\pi-r}{2}$ in (\ref{lemma_item_close_arc_masses}) is just used for convenience, and it could be replaced with an arbitrarily small positive number.
This length will also be used later when we need to expand arcs by a small amount.

\begin{proof}
For (\ref{lemma_item_support_points}), let $m = \min \{ \mu([\theta]) : [\theta] \in \supp(\mu) \}$, noting that $m >0$ because $\mu$ is finitely supported.
Then moving any point mass of $\mu$ a distance of $\varepsilon$ costs at least $m\varepsilon$. 
Choosing $\eta \in (0,m\varepsilon)$, we find that for any $\nu \in \vrmcircleq$ satisfying $d_W(\mu,\nu) < \eta$, there is a transport plan between $\mu$ and $\nu$ with a cost of less than $m\varepsilon$, so each point in $\supp(\mu)$ must be at a distance less than $\varepsilon$ from some point in $\supp(\nu)$.

To show (\ref{lemma_item_number_of_arcs}), choose one point in each $\mu$-arc that is in $\supp(\mu)$ to color blue, and color the points opposite them red.
We apply (\ref{lemma_item_support_points}) to choose $\eta$ such that each point in $\supp(\mu)$ is at a distance less that $\frac{\pi-r}{2}$ from some point in $\supp(\nu)$.
Then for each blue point, choose a point in $\supp(\nu)$ at distance less than $\frac{\pi-r}{2}$ to color green, and color the points opposite the green points orange (here a point may be colored both blue and green or may be colored both red and orange).
Since the blue points are in distinct $\mu$-arcs, the distance between any two of them is at least $2(\pi-r)$, and thus the green points are distinct; so we have $2k+1$ points of each color.
Since the red and blue points alternate by Lemma~\ref{lemma:red_blue_points_and_mu_arcs} and each red point is at a distance of at least $\pi-r$ from each blue point, the green and orange points must alternate as well.  
So again by Lemma~\ref{lemma:red_blue_points_and_mu_arcs}, $\nu$ has at least $2k+1$ arcs.

Finally, we prove (\ref{lemma_item_close_arc_masses}).
By (\ref{lemma_item_support_points}), we may choose $\eta$ so that each point in $\supp(\mu)$ is within a distance of $\frac{\pi-r}{2}$ from some point in $\supp(\nu)$, and this will imply $\supp(\mu) \subseteq \bigcup_i A'_i$.
Since $A_0, A_1, \dots, A_{2l}$ are distinct $\nu$-arcs, each is at a distance of at least $2(\pi-r)$ from all others, so each $A'_i$ is at a distance of at least $\pi-r$ from all others.
This shows that the $A'_i$ are disjoint.
For each $i$, the only points of $\supp(\nu)$ in $A'_i$ are those that are in $A_i$, so $\nu(A'_i) = \nu(A_i)$.
In any transport plan between $\mu$ and $\nu$, for each $i$, at least a mass of $|\mu(A'_i) - \nu(A'_i)|$ must be transported from $A'_i$ to outside of $A'_i$.
Since all other $A'_j$ are at a distance of at least $\pi-r$ from $A'_i$, we must have $|\mu(A'_i) - \nu(A'_i)| (\pi-r) \leq d_W(\mu,\nu)$.
Therefore, if we require $\eta < (\pi-r)\varepsilon$, we find that if $d_W(\mu,\nu) < \eta$, then $|\mu(A'_i) - \nu(A'_i)| < \varepsilon$.
\end{proof}

For any $k \geq 0$ and any $\mu \in \vrmcircleq - W_{2k+1}$,  Lemma~\ref{lemma:close_measures_implies_close_masses}(\ref{lemma_item_number_of_arcs}) above implies there is an open neighborhood of $\mu$ in which all measures have at least as many arcs as $\mu$.
This neighborhood is therefore contained in $\vrmcircleq - W_{2k+1}$. 
This gives us the following lemma.

\begin{lem}\label{lemma:V_1_to_V_2k+1_closed}

For any $r \in [0,\pi)$ and any $k \geq 0$, $W_{2k+1}(r)$ is closed in $\vrmleq{S^1}{r}$.
\end{lem}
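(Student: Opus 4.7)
The plan is to prove that $W_{2k+1}(r)$ is closed by showing its complement $\vrmleq{S^1}{r} - W_{2k+1}(r)$ is open, using Lemma~\ref{lemma:close_measures_implies_close_masses}(\ref{lemma_item_number_of_arcs}) essentially off-the-shelf. The paragraph immediately preceding the lemma already sketches the argument, so the proof is short.

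First, I would pick an arbitrary $\mu \in \vrmleq{S^1}{r} - W_{2k+1}(r)$. By Proposition~\ref{prop:facts_about_arcs_and_V_2k+1}, $\mu$ belongs to $V_{2l+1}(r)$ for some unique $l$, and the hypothesis $\mu \notin W_{2k+1}(r) = V_1(r) \cup V_3(r) \cup \dots \cup V_{2k+1}(r)$ forces $l \geq k+1$, so $\mu$ has at least $2l+1 \geq 2k+3$ arcs.

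Next, I would apply Lemma~\ref{lemma:close_measures_implies_close_masses}(\ref{lemma_item_number_of_arcs}) to $\mu$ with, say, $\varepsilon = 1$: this yields an $\eta > 0$ such that every $\nu \in \vrmleq{S^1}{r}$ with $d_W(\mu,\nu) < \eta$ satisfies $\nu \notin W_{2l-1}(r)$, i.e.\ $\nu$ has at least $2l+1$ arcs. In particular, since $2l+1 \geq 2k+3 > 2k+1$, any such $\nu$ is not in $W_{2k+1}(r)$. Thus the open ball of radius $\eta$ around $\mu$ in $\vrmleq{S^1}{r}$ is contained in $\vrmleq{S^1}{r} - W_{2k+1}(r)$, showing this complement is open and concluding the proof.

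There is essentially no obstacle here: all the work is buried in Lemma~\ref{lemma:close_measures_implies_close_masses}, whose statement~(\ref{lemma_item_number_of_arcs}) is tailored exactly to this application. The only point requiring a line of care is observing that the number of arcs is always odd (Proposition~\ref{prop:facts_about_arcs_and_V_2k+1}), which is what lets us jump from ``$\mu$ has more than $2k+1$ arcs'' to ``$\mu$ has at least $2k+3$ arcs'' and hence find a strictly positive gap between the arc-count at $\mu$ and the threshold defining $W_{2k+1}(r)$.
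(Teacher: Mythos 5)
Your proof is correct and follows exactly the same argument the paper gives in the paragraph preceding the lemma: apply Lemma~\ref{lemma:close_measures_implies_close_masses}(\ref{lemma_item_number_of_arcs}) to any $\mu$ in the complement of $W_{2k+1}(r)$ to obtain an open neighborhood of $\mu$ consisting of measures with at least as many arcs, hence still outside $W_{2k+1}(r)$. The extra remark about the arc count being odd (so at least $2k+3$) is a correct clarification, though the paper's phrasing ``at least as many arcs as $\mu$'' already suffices.
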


Note that in the case $k=0$, we have $W_1=V_1$, so this lemma shows $V_1$ is closed in $\vrmcircleq$.
This lemma also implies $V_{2k+1} = W_{2k+1} - W_{2k-1}$ is open in $W_{2k+1}$ for each $k$.
We now give an explicit description of the closure of each $V_{2k+1}$: we will write the closure of $V_{2k+1}(r)$ in $\vrmleq{S^1}{r}$ as $\overline{V_{2k+1}(r)}$.
Note that Lemma~\ref{lemma:V_1_to_V_2k+1_closed} already implies $\overline{V_{2k+1}} \subseteq W_{2k+1}$.
The following lemma shows that situations like that in Example~\ref{example:V2k+1_not_necessarily_open_or_closed}, in which a sequence in $V_{2k+1}$ converges to a point in $\overline{V_{2k+1}}$ by altering the masses on a fixed support, in fact account for all measures in $\overline{V_{2k+1}}$.
While this result is not unexpected, the proof is long and we give it in the appendix.

\begin{lem}
\label{lemma:closure_of_V_2k+1}

For all $k \geq 0$ and all $r \in [0,\pi)$, $\mu \in \overline{V_{2k+1}(r)}$ if and only if $\supp(\mu)$ is contained in a finite set $T \subset S^1$ such that $\diam(T) \leq r$ and $\arcs_r(T) = 2k+1$.
\end{lem}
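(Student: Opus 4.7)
\textit{Reverse direction.} This direction is straightforward. Given a finite $T \supseteq \supp(\mu)$ with $\diam(T) \le r$ and $\arcs_r(T) = 2k+1$, the plan is to fix any probability measure $\nu$ on $S^1$ with $\supp(\nu) = T$ (for instance the uniform distribution on $T$), and for $t \in (0,1]$ form $\mu_t = (1-t)\mu + t\nu$. Since $\supp(\mu) \subseteq T = \supp(\nu)$, we have $\supp(\mu_t) = T$ for every $t > 0$, so $\arcs_r(\mu_t) = 2k+1$ and $\mu_t \in V_{2k+1}(r)$. A transport estimate $d_W(\mu_t,\mu) \le t\cdot\diam(T)$ shows $\mu_t \to \mu$ as $t \to 0^+$, exhibiting $\mu \in \overline{V_{2k+1}(r)}$.

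\textit{Forward direction: setup.} For the harder direction, suppose $\mu \in \overline{V_{2k+1}(r)}$ and fix $\mu_n \in V_{2k+1}(r)$ with $\mu_n \to \mu$. My strategy is to extract a uniformly bounded collection of structurally important support points from each $\mu_n$, pass to a convergent subsequence, and combine the resulting limit points with $\supp(\mu)$ to form $T$. For each $n$, label the $\mu_n$-arcs cyclically as $A_{0,n},\ldots,A_{2k,n}$ and the $2k+1$ gaps (excluded components) between them as $g_{0,n},\ldots,g_{2k,n}$. From each arc I select a representative $p_{j,n} \in A_{j,n} \cap \supp(\mu_n)$, and from each gap a finite collection of ``cover'' support points $y_{1,j,n},\ldots,y_{N_j,j,n} \in \supp(\mu_n)$ whose open excluded arcs (of length $2(\pi-r)$) cover $g_{j,n}$. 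Each gap can be covered with at most $\lceil 2\pi/(2(\pi-r))\rceil$ such points, a bound independent of $n$, so the total collection is uniformly bounded in cardinality. After passing to a subsequence all these points converge in the compact circle, giving limits $p_j$ and $y_{i,j}$, and I set
\[
T \;=\; \supp(\mu) \;\cup\; \{p_0,\ldots,p_{2k}\} \;\cup\; \{y_{i,j}\}_{i,j},
\]
which is finite. The bound $\diam(T) \le r$ follows from continuity: every element of $T$ is the limit of a sequence in $\supp(\mu_n)$ (using weak convergence of $\mu_n \to \mu$ to find such approximating sequences for $\supp(\mu)$), and pairwise distances within $\supp(\mu_n)$ are bounded by $r$.

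\textit{The main obstacle: computing $\arcs_r(T)$.} The lower bound $\arcs_r(T) \ge 2k+1$ is immediate from Lemma~\ref{lemma:red_blue_points_and_mu_arcs}: the $p_{j,n}$'s lie in distinct $\mu_n$-arcs and are thus pairwise separated by at least $2(\pi-r) > 0$, and their antipodes alternate with them; both properties pass to the limits $p_j$, and the lemma then gives the bound. The harder upper bound $\arcs_r(T) \le 2k+1$ is the main obstacle, since adding points to a set can in principle split existing arcs or create new ones. My plan is to pass to a further subsequence so that each closed arc $A_{j,n}$ converges in Hausdorff distance to a closed arc $A_j \subseteq S^1$, and then establish three claims:
(i) $T \subseteq \bigcup_j A_j$, by subsequence arguments that stabilize which arc $A_{l,n}$ each approximating support point eventually lies in;
(ii) each $A_j$ lies in the non-excluded region of $T$, because for $y \in A_j$ arising as a limit $y_n \to y$ with $y_n \in A_{j,n}$ and $z \in T$ arising as a limit $z_n \to z$ with $z_n \in \supp(\mu_n)$, we have $d(y_n,z_n) \le r$ (every point of $A_{j,n}$ is within $r$ of every support point of $\mu_n$), hence $d(y,z) \le r$ in the limit;
and (iii) consecutive $A_j, A_{j+1}$ lie in different connected components of the non-excluded region of $T$.
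For (iii), the limit gap between $A_j$ and $A_{j+1}$ has length at least $2(\pi-r) > 0$, and a pigeonhole argument on the cover shows that the \emph{closed} excluded arcs opposite the limit points $y_{i,j}$ still cover this gap. Picking $z^*$ in the gap that lies in the interior (rather than on the boundary) of such a closed excluded arc gives $d(z^*, y_{i,j}) > r$ strictly, so $z^*$ is excluded by $y_{i,j} \in T$; since $z^*$ lies in the open arc forming the gap, it separates $A_j$ from $A_{j+1}$ within that arc. Combining (i)--(iii), the non-excluded region of $T$ meets $T$ in exactly $2k+1$ components, yielding $\arcs_r(T) = 2k+1$ and completing the proof.
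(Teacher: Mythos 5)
Your proof is correct, and the forward direction takes a genuinely different route from the paper's. You argue directly: given $\mu_n \in V_{2k+1}$ with $\mu_n \to \mu$, you extract a bounded amount of structural data from each $\mu_n$ (arc representatives $p_{j,n}$ and gap-covering support points $y_{i,j,n}$), pass to a subsequence along which the $\mu_n$-arcs converge in Hausdorff distance and the extracted points converge, and assemble $T$ from $\supp(\mu)$ together with the limits. The paper instead argues by contrapositive: assuming no admissible $T$ exists, it bounds the maximum achievable $\arcs_r(T)$ over all finite $T \supseteq \supp(\mu)$ with $\diam(T) \leq r$ by a sum of floor functions of arc lengths between consecutive colored points, observes that this maximum falls strictly below $2k+1$, and then uses the stability of those floor values under $\varepsilon$-perturbation of the arc lengths to conclude that every $\nu$ sufficiently close to $\mu$ also has fewer than $2k+1$ arcs. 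Your compactness approach is geometrically transparent and avoids the somewhat ad hoc floor-function bookkeeping, at the cost of several limit-passing verifications. One imprecision to fix: the claim that $\lceil 2\pi/(2(\pi-r))\rceil$ open excluded arcs always suffice to cover a gap does not follow from a naive total-length count, since open arcs of fixed length can overlap arbitrarily little. A uniform bound nonetheless holds, and it comes from the diameter constraint on the support rather than from the gap length: in a minimal subcover, the left endpoints advance by at least $2(\pi-r)$ every two steps, while the support points generating those excluded arcs all lie within an arc of length at most $r$, bounding the cardinality by roughly $r/(\pi-r)+2$; with this correction your construction of $T$ goes through. Note also that once you invoke Lemma~\ref{lemma:red_blue_points_and_mu_arcs} for the lower bound $\arcs_r(T) \geq 2k+1$, your step (iii) becomes redundant, since (i) and (ii) alone already give $\arcs_r(T) \leq 2k+1$. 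Your reverse direction matches the paper's argument essentially exactly (the paper's $\alpha_j$ is precisely your $\mu_{1/j}$).
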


Lemma~\ref{lemma:closure_of_V_2k+1} will allow us to describe measures of $\vrmcircleq$ in a concise and useful form.
First, by Lemma~\ref{lemma:V_1_to_V_2k+1_closed}, the closure of $V_{2k+1}$ in $W_{2k+1}$ is $\overline{V_{2k+1}}$, and the interior of $V_{2k+1}$ in $W_{2k+1}$ is $V_{2k+1}$.
Therefore, the boundary of $V_{2k+1}$ in $W_{2k+1}$ is $\overline{V_{2k+1}} - V_{2k+1}$.
From here on, we write $\partial V_{2k+1} = \overline{V_{2k+1}} - V_{2k+1}$ for the boundary of $V_{2k+1}$ in $W_{2k+1}$.
Note that this is not necessarily the boundary of $V_{2k+1}$ in $\vrmcircleq$, as there may be points in $V_{2k+1}$ that are not in the interior of $V_{2k+1}$ in $\vrmcircleq$ (see Example~\ref{example:V2k+1_not_necessarily_open_or_closed}).
By Lemma~\ref{lemma:closure_of_V_2k+1}, we may write any measure $\mu \in \overline{V_{2k+1}}$ as $\mu = \sum_{i=0}^{2k} a_i \mu_i$ where $\bigcup_i \supp(\mu_i)$ has $2k+1$ arcs, each $\mu_i$ is a probability measure supported on a distinct $(\bigcup_i \supp(\mu_i))$-arc, $a_i \geq 0$ for each $i$, and $\sum_i a_i = 1$.
Furthermore, $\mu \in \partial V_{2k+1}$ if and only if $a_i=0$ for some $i$. 
If $\mu \in V_{2k+1}$, then each $a_i$ is the amount of mass in an individual $\mu$-arc, so in this case we will refer to the $a_i$ as the \textit{arc masses} of $\mu$.
When we write $\mu \in \overline{V_{2k+1}}$ as $\mu = \sum_{i=0}^{2k} a_i \mu_i$ meeting the description above, we will say $\mu$ is written in \textit{(2k+1)-arc mass form}, or simply \textit{arc mass form} when $k$ is understood.
The value of $k$ is relevant, as measures may be in $\overline{V_{2k+1}}$ for multiple values of $k$ (in general, the closures $\overline{V_{2k+1}}$ are not disjoint, even though the $V_{2k+1}$ are disjoint: again, see Example~\ref{example:V2k+1_not_necessarily_open_or_closed}).
If $\mu \in V_{2k+1}$, both the set of $\mu_i$ and their corresponding $a_i$ are completely determined by $\mu$, so the arc mass form of $\mu$ is unique up to reordering the sum.
In general, it is not unique if $\mu \in \partial V_{2k+1}$, since if $a_i = 0$, there are many choices for $\mu_i$.
We now expand on the ideas of Lemma~\ref{lemma:close_measures_implies_close_masses}: the following lemma essentially shows that close measures have close arc masses.

\begin{lem}\label{lemma:close_arc_masses}

Let $r \in [0, \pi)$, and let each sum below express measures in arc mass form.

\begin{enumerate}
    \item\label{arc_mass_lemma_item_boundary} Let $k \geq 1$, and let $\mu \in \partial V_{2k+1}(r)$. 
    For any $\varepsilon > 0$, there exists an $\eta > 0$ such that if $\nu = \sum_{i=0}^{2k} b_i \nu_i \in V_{2k+1}(r)$ satisfies $d_W(\mu,\nu) < \eta$, then $b_i < \varepsilon$ for some $i$.
    
    \item\label{arc_mass_lemma_item_close_arc_masees} Let $k \geq 0$, and let $\mu = \sum_{i=0}^{2k} a_i \mu_i \in V_{2k+1}(r)$. 
    For any $\varepsilon > 0$, there exists an $\eta > 0$ such that if $\nu = \sum_{i=0}^{2k} b_i \nu_i \in V_{2k+1}(r)$ satisfies $d_W(\mu, \nu) < \eta$ and $A_0, \dots, A_{2k}$ are closed arcs obtained by expanding the $\nu$-arcs by $\frac{\pi - r}{2}$ on both sides, then possibly after reordering, we have $\supp(\mu_i) \subseteq A_i$, $a_i = \mu(A_i)$, $b_i = \nu(A_i)$, and $|a_i - b_i| < \varepsilon$ for each $i$.
\end{enumerate}

\end{lem}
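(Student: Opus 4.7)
The plan is to prove (\ref{arc_mass_lemma_item_boundary}) by a compactness and contradiction argument, and to prove (\ref{arc_mass_lemma_item_close_arc_masees}) by combining Lemma~\ref{lemma:close_measures_implies_close_masses}(\ref{lemma_item_close_arc_masses}) with an explicit bijective matching of $\mu$-arcs to expanded $\nu$-arcs built from chosen support representatives.

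For (\ref{arc_mass_lemma_item_boundary}), suppose the conclusion fails: there exist $\varepsilon > 0$ and a sequence $\nu_n = \sum_{i=0}^{2k} b_i^{(n)} \nu_i^{(n)} \in V_{2k+1}(r)$ with $d_W(\mu, \nu_n) \to 0$ and $b_i^{(n)} \geq \varepsilon$ for every $i$ and $n$. Since $\mu \in \partial V_{2k+1}(r) \subseteq W_{2k-1}(r)$ by Lemma~\ref{lemma:V_1_to_V_2k+1_closed}, $\mu$ has fewer than $2k+1$ arcs. For each $n$ and $i$, pick a support point $[\theta_i^{(n)}]$ in the $i$-th $\nu_n$-arc. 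The mass bound $b_i^{(n)} \geq \varepsilon$ together with Wasserstein closeness forces $d_{S^1}([\theta_i^{(n)}], \supp(\mu)) \leq d_W(\mu, \nu_n)/\varepsilon \to 0$, since any transport plan must move at least mass $\varepsilon$ from the $\nu_n$-arc and this costs at least $\varepsilon \cdot d_{S^1}([\theta_i^{(n)}], \supp(\mu))$. Passing to subsequences by compactness of $S^1$, $[\theta_i^{(n)}] \to [\theta_i] \in \supp(\mu)$. Consecutive $\nu_n$-arcs are separated by excluded arcs of length at least $2(\pi - r)$, so $d_{S^1}([\theta_i], [\theta_j]) \geq 2(\pi - r) > 0$ for $i \neq j$; since $\diam(\supp(\mu)) \leq r < \pi$, no $[\theta_i]$ coincides with any antipode $[\theta_j + \pi]$, so the $4k+2$ points $\{[\theta_i]\} \cup \{[\theta_i + \pi]\}$ are pairwise distinct in the limit. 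The alternation of blue points $[\theta_i^{(n)}]$ and red antipodes guaranteed in each $\nu_n$ by the converse of Lemma~\ref{lemma:red_blue_points_and_mu_arcs} is a cyclic-order property stable under perturbations that keep the points distinct, so it passes to the limit. The forward direction of Lemma~\ref{lemma:red_blue_points_and_mu_arcs} applied to $\supp(\mu)$ then forces $\mu$ to have at least $2k+1$ arcs, contradicting $\mu \in W_{2k-1}(r)$.

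For (\ref{arc_mass_lemma_item_close_arc_masees}), let $\varepsilon > 0$. Lemma~\ref{lemma:close_measures_implies_close_masses}(\ref{lemma_item_close_arc_masses}) applied to $\mu \in V_{2k+1}(r)$ yields some $\eta_0 > 0$ such that any $\nu \in V_{2k+1}(r)$ with $d_W(\mu, \nu) < \eta_0$ has its $2k+1$ expanded $\nu$-arcs $A_0, \dots, A_{2k}$ disjoint, covering $\supp(\mu)$, and satisfying $\nu(A_i) = b_i$ and $|\mu(A_i) - \nu(A_i)| < \varepsilon$. To set up the reordering, choose $[\phi_j] \in \supp(\mu_j)$ for each $j$; the converse of Lemma~\ref{lemma:red_blue_points_and_mu_arcs} says the blue $[\phi_j]$ and red antipodes alternate around $S^1$ with some minimum cyclic gap $\Delta > 0$. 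Using Lemma~\ref{lemma:close_measures_implies_close_masses}(\ref{lemma_item_support_points}), shrink $\eta \leq \eta_0$ so that each $[\phi_j]$ has a point $[\phi_j'] \in \supp(\nu)$ within $\delta < \min(\Delta/2, (\pi - r)/2)$. The perturbed $[\phi_j']$ and their antipodes still alternate, so the forward direction of Lemma~\ref{lemma:red_blue_points_and_mu_arcs} places the $[\phi_j']$ in $2k+1$ distinct $\nu$-arcs, defining a bijection $\sigma$; and $\delta < (\pi - r)/2$ forces $[\phi_j] \in A_{\sigma(j)}$. Relabel so that $\sigma$ is the identity.

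Finally, to promote $[\phi_j] \in A_j$ to $\supp(\mu_j) \subseteq A_j$, one observes that the endpoints of the $\mu$-arc $B_j$ are boundaries of $\mu$-excluded open arcs opposite certain $\mu$-support points, while the endpoints of the corresponding unexpanded $\nu$-arc are boundaries of $\nu$-excluded open arcs opposite the nearby $\nu$-support points guaranteed by Lemma~\ref{lemma:close_measures_implies_close_masses}(\ref{lemma_item_support_points}); these paired endpoints differ by at most $\delta$, and the $(\pi - r)/2$ expansion absorbs this discrepancy to give $B_j \subseteq A_j$. Disjointness of the $A_i$ then yields $\supp(\mu) \cap A_j = \supp(\mu_j)$, so $\mu(A_j) = a_j$; combined with $\nu(A_j) = b_j$ and $|\mu(A_j) - \nu(A_j)| < \varepsilon$ from Lemma~\ref{lemma:close_measures_implies_close_masses}(\ref{lemma_item_close_arc_masses}), this yields the desired conclusion. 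The main obstacle is this final endpoint-correspondence step, requiring care in identifying which $\mu$-excluded arc bounds a given $\mu$-arc on each side and ensuring the corresponding $\nu$-excluded arc (opposite the nearby $\nu$-support point) bounds the appropriate unexpanded $\nu$-arc.
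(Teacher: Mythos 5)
Your proof of part (\ref{arc_mass_lemma_item_boundary}) is correct in spirit but takes a different route from the paper: you use a compactness/subsequence argument, whereas the paper gives a direct quantitative bound ($\min_i b_i \leq \frac{2\eta}{\pi-r}$) using exactly the same red/blue, green/orange coloring scheme. Your version proves existence of $\eta$ without exhibiting it, which is acceptable here. One slip: the claimed bound $d_{S^1}([\theta_i^{(n)}], \supp(\mu)) \leq d_W(\mu,\nu_n)/\varepsilon$ does not hold for an \emph{arbitrary} choice of support point $[\theta_i^{(n)}]$ in the $i$-th $\nu_n$-arc. The $\geq \varepsilon$ mass in that arc could be distributed over many support points, and the transport only forces \emph{some} such point to be within $d_W(\mu,\nu_n)/\varepsilon$ of $\supp(\mu)$. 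You need to choose $[\theta_i^{(n)}]$ to be that particular point. This is a local patch; the rest of the argument (pairwise separation of limits, stability of cyclic alternation, application of both directions of Lemma~\ref{lemma:red_blue_points_and_mu_arcs}) is sound.

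Your proof of part (\ref{arc_mass_lemma_item_close_arc_masees}) has a genuine gap in the ``promotion'' step, as you yourself flag. The endpoint-correspondence argument asserts that the endpoints of the unexpanded $\nu$-arc are within $\delta$ of the endpoints of the $\mu$-arc $B_j$, on the grounds that they are boundaries of excluded arcs opposite the $\nu$-support points given by Lemma~\ref{lemma:close_measures_implies_close_masses}(\ref{lemma_item_support_points}). But Lemma~\ref{lemma:close_measures_implies_close_masses}(\ref{lemma_item_support_points}) only says every $\mu$-support point has a nearby $\nu$-support point; $\nu$ may well have \emph{additional} support points far from $\supp(\mu)$ (with very little mass, so $d_W$ stays small). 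Such a point's excluded region could intrude into $B_j$ from the side, so that the endpoint of the $\nu$-arc is not close to the endpoint of $B_j$ and the $\frac{\pi-r}{2}$ expansion does not recover the discrepancy. Ruling this out requires a separate argument, e.g.\ a counting argument showing no $\nu$-excluded component can appear inside a $\mu$-arc without merging two others (impossible because every $\mu$-arc contains a non-$\nu$-excluded point), or the paper's more direct route. The paper avoids the issue entirely by proving the \emph{other} direction of the correspondence: it supposes two $\supp(\mu)$ points $[\theta_1],[\theta_2]$ in distinct $\mu$-arcs lie in the same $A_i$, produces via Lemma~\ref{lemma:red_blue_points_and_mu_arcs} a red point $[\theta']$ between them (which, being at distance $\geq \pi-r$ from both $[\theta_1]$ and $[\theta_2]$, must sit in the unexpanded $\nu$-arc), and shows $[\theta']$ is $\nu$-excluded because $[\theta'+\pi]\in\supp(\mu)$ has a nearby $\nu$-support point — a contradiction; a counting argument then gives the bijection. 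You should adopt that argument or fill your endpoint claim with a real proof.
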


\begin{proof}
To prove (\ref{arc_mass_lemma_item_boundary}), suppose $\nu = \sum_{i=0}^{2k} b_i \nu_i \in V_{2k+1}(r)$ is written in arc mass form and satisfies $d_W(\mu,\nu) < \eta$.
For each $i$, $\supp(\mu)$ must have a point within $\frac{\eta}{b_i}$ of some point in $\supp(\nu_i)$, otherwise the mass of $b_i \nu_i$ could not be transported for a cost of less than $\eta$.
We now suppose for a contradiction that $\frac{\eta}{\min_i \{ b_i \}} < \frac{\pi-r}{2}$.
For each $i$, we may choose a point in $\supp(\nu_i)$ to color green and a point in $\supp(\mu)$ within a distance of $\frac{\eta}{\min_i \{ b_i \}}$ from this green point to color blue (here we allow a point to be colored both green and blue).
The green points are in separate $\nu$-arcs, so they are at distance at least $2(\pi-r)$ from each other.
Since $\frac{\eta}{\min_i \{ b_i \}} < \frac{\pi-r}{2}$, the blue points must be distinct, so we have $2k+1$ points of each color.
Color the points opposite the blue points red and the points opposite the green points orange.
By Lemma~\ref{lemma:red_blue_points_and_mu_arcs}, the green and orange points alternate around the circle, and each green point is at a distance of at least $\pi-r$ from each orange point since $\diam(\supp(\nu)) \leq r$.
Since $\frac{\eta}{\min_i \{ b_i \}} < \frac{\pi-r}{2}$, this implies the red and blue points alternate as well.
But by Lemma~\ref{lemma:red_blue_points_and_mu_arcs}, this implies $\mu$ has at least $2k+1$ arcs, contradicting the assumption that $\mu \in \partial V_{2k+1}$.
Therefore we can conclude that $\frac{\eta}{\min_i \{ b_i \}} \geq \frac{\pi-r}{2}$, so $\min_i \{ b_i \} \leq \frac{2 \eta}{\pi - r}$.
So given any $\varepsilon > 0$, setting $\eta = \frac{\pi-r}{2}\varepsilon$ gives the desired result.

To prove (\ref{arc_mass_lemma_item_close_arc_masees}), let $\varepsilon > 0$.
Applying parts (\ref{lemma_item_support_points}) and (\ref{lemma_item_close_arc_masses}) of Lemma~\ref{lemma:close_measures_implies_close_masses}, we can choose an $\eta > 0$ such that for any $\nu = \sum_{i=0}^{2k} b_i \nu_i \in V_{2k+1}(r)$ written in arc mass form and satisfying $d_W(\mu, \nu) < \eta$, the following hold: each point in $\supp(\mu)$ is within $\frac{\pi-r}{2}$ of some point in $\supp(\nu)$, and letting $A_0, \dots, A_{2k}$ be the disjoint closed arcs obtained by expanding the $\nu$-arcs by $\frac{\pi -r}{2}$ on both sides, $\supp(\mu) \subseteq \bigcup_i A_i$ and $|\mu(A_i) - \nu(A_i)| < \varepsilon$ for each $i$.
If $k=0$, we are done, since $A_0$ contains all points of $\supp(\mu)$ and $\supp(\nu)$, so we can suppose $k \geq 1$.
Reordering if necessary, we have $b_i = \nu(A_i)$ by the definition of arc mass form.
We will show that for any $i$, the points of $\supp(\mu)$ contained in $A_i$ all belong to the same $\mu$-arc; since $\supp(\mu) \subseteq \bigcup_i A_i$, this will imply that each $A_i$ contains some points of $\supp(\mu)$ and thus contains exactly the points of $\supp(\mu)$ belonging to a particular $\mu$-arc.
After reordering if necessary, this will show $a_i = \mu(A_i)$ for each $i$.
Suppose for a contradiction that $[\theta_1], [\theta_2] \in \supp(\mu)$ are in distinct $\mu$-arcs and that $[\theta_1], [\theta_2] \in A_i$.
Color one point of $\supp(\mu)$ in each $\mu$-arc blue, choosing $[\theta_1]$ and $[\theta_2]$ for their $\mu$-arcs, and color the points opposite the blue points red.
By Lemma~\ref{lemma:red_blue_points_and_mu_arcs}, the red and blue points alternate, so there is a red point $[\theta']$ contained between $[\theta_1]$ and $[\theta_2]$ in $A_i$, and since $[\theta']$ is at distance at least $\pi-r$ from both $[\theta_1]$ and $[\theta_2]$, $[\theta']$ must in fact be contained in the $\nu$-arc contained in $A_i$.
Since there must be a point of $\supp(\nu)$ within $\frac{\pi-r}{2}$ of the blue point $[\theta' + \pi]$, the point $[\theta']$ is excluded by $\nu$, contradicting the fact that it is in a $\nu$-arc.
Therefore if $[\theta_1], [\theta_2] \in \supp(\mu) \cap A_i$, they must belong to the same $\mu$-arc, as required.
\end{proof}

We can now define certain subspaces of $\vrmcircleq$ of interest and describe how these subspaces will eventually be used to define a CW complex.
As with our previous definitions, we will often omit the parameter $r$.
For $0 \leq k \leq K$, let $P_{2k+1} = P_{2k+1}(r) \subseteq \vrmleq{S^1}{r}$ be the set of measures whose support is $2k+1$ evenly spaced points; we refer to these as \textit{regular polygonal measures}.
Each $P_{2k+1}$ is nonempty if and only if $V_{2k+1}$ is nonempty, and by Proposition~\ref{prop:facts_about_arcs_and_V_2k+1}, this holds if and only if $r \geq \frac{2k \pi}{2k+1}$.
The closure $\overline{P_{2k+1}}$ of $P_{2k+1}$ in $\vrmcircleq$ consists of measures whose support is contained in a set of $2k+1$ evenly spaced points (where not all of these points are required to be in the support).
The set of measures with support contained in a fixed individual $(2k+1)$-gon is homeomorphic to a $2k$-simplex (and thus homeomorphic to the disk $D_{2k}$), where a homeomorphism can be defined by taking linear combinations of delta measures to the corresponding linear combinations of vertices of a $2k$-simplex.
This homeomorphism sends a measure with all $2k+1$ points in its support to the interior of the simplex.
Furthermore, $\overline{P_{2k+1}} \cong D^{2k} \times S^1$, where the $S^1$ parameterizes the set of regular $(2k+1)$-gons on the circle (this $S^1$ may be better thought of as the quotient of $S^1$ by the action of $\frac{\mathbb{Z}}{(2k+1)\mathbb{Z}}$).
These homeomorphisms can be checked using Proposition 5.2 of~\cite{AAF}, for instance.
We define $\partial P_{2k+1} = \overline{P_{2k+1}} - P_{2k+1} = \overline{P_{2k+1}} \cap \partial V_{2k+1}$ since for $k\geq 1$, we have the homeomorphism $\partial P_{2k+1} \cong S^{2k-1} \times S^1$; however, $\partial P_{2k+1}$ is in general not the boundary of $P_{2k+1}$ in $\vrmcircleq$.
Note that $P_{1}(r)$ consists of all delta measures, is equal to its own closure, and is homeomorphic to $S^1$ by the canonical embedding of $S^1$ into $\vrmcircleq$.
We also let $R_{2k} = R_{2k}(r) \subseteq \vrmleq{S^1}{r}$ be the set of measures with support equal to the specific regular $(2k+1)$-gon $\{ [0], [\frac{1 \cdot 2 \pi}{2k+1}], \dots, [\frac{2k \cdot 2\pi}{2k+1} ] \}$ (the choice of the polygon containing $[0]$ is just for convenience; any fixed individual polygon could also be used).
Then $R_{2k} \subseteq V_{2k+1}$ and $R_{2k}$ is homeomorphic to the interior of a $2k$-simplex.
The closure $\overline{R_{2k}}$ of $R_{2k}$ in $\vrmcircleq$ is the set of measures with support contained in $\{ [0], [\frac{1 \cdot 2 \pi}{2k+1}], \dots, [\frac{2k \cdot 2\pi}{2k+1} ] \}$, and we will write $\partial R_{2k} = \overline{R_{2k}} - R_{2k}$.
Thus, for $k \geq 1$, the pair $(\overline{R_{2k}}, \partial R_{2k})$ is homeomorphic to $(D^{2k}, S^{2k-1})$.
Note that $\partial R_{2k}$ is not necessarily the boundary of $R_{2k}$ in $\vrmcircleq$.
For $k=0$, we let $D^0$ be a space with one point, so $R_0 \cong D^0$.

Our strategy for finding the homotopy type of $\vrmcircleq$ will be to define (in Section~\ref{section:sequence_of_quotients}) a quotient map $q \colon \vrmcircleq \to \vrmcircleq / \sim$ that is a homotopy equivalence.
Under the equivalence relation $\sim$, each measure of $\vrmcircleq$ will be equivalent to exactly one regular polygonal measure, that is, one measure in some $P_{2k+1}$.
Thus, $\vrmcircleq / \sim$ can be described by specifying how the closures $\overline{P_{2k+1}}$ are glued together by their boundaries.
We further split each $P_{2k+1}$ into $R_{2k}$ and $P_{2k+1} - R_{2k}$, which are homeomorphic to an open $2k$-disk and an open $(2k+1)$-disk respectively.
We will show that these form open cells of a CW complex that is homeomorphic to $\vrmcircleq / \sim$ and thus homotopy equivalent to $\vrmcircleq$.
We thus have one cell in each dimension $0 \leq n \leq 2K+1$, and these cells are glued together as described in Section~\ref{section:introduction} to give a space homotopy equivalent to $S^{2K+1}$.
In the following section, we record some technical results that will allow us to describe a quotient map that is also a homotopy equivalence.

\vspace{.15cm}
\section{Homotopies, Quotients, and the Homotopy Extension Property}
\label{section:homotopies_quotients_HEP}

\subsection{General Facts}\label{subsection:HEP_general_facts}

This section covers some facts related to quotient maps and the homotopy extension property.
We recall the relevant definitions.
If $X$ is a topological space and $A \subseteq X$, the pair $(X,A)$ is said to have the \textit{homotopy extension property (HEP)} if given any homotopy $H \colon A \times I \to Z$ and any map $f \colon X \to Z$ such that $f(a) = H(a,0)$ for any $a \in A$, there exists a homotopy $G \colon X \times I \to Z$ such that $G |_{A \times I} = H$ and $G(\_\,,0) = f$.
A \textit{fiber} of a function is a preimage of a singleton.
A surjective continuous function $q \colon X \to Y$ is a quotient map if and only if it satisfies the following universal property: for any space $Z$ and any continuous $f \colon X \to Z$ that is constant on the fibers of $q$ (that is, $q(x_1) = q(x_2)$ implies $f(x_1) = f(x_2)$), there is a unique continuous function $g \colon Y \to Z$ such that $g \circ q = f$, as in the following diagram.
\[
\begin{tikzcd}
X \arrow[d, "q"'] \arrow[rd, "f"] &   \\
Y \arrow[r, "g"', dashed]         & Z
\end{tikzcd}
\]
Furthermore, if this property holds, then $Y$ is homeomorphic to the quotient space $X / \sim$ where $x_1 \sim x_2$ if and only if $q(x_1) = q(x_2)$ and a subset of $X / \sim$ is open if and only if its preimage under $q$ is open in $X$.

Proposition~\ref{prop:quotient_homotopy_equivalences} below shows that quotient maps meeting certain conditions are homotopy equivalences, and this is one of the main tools we will use.
Lemma~\ref{lemma:homotopies_on_quotients} will be used in the proof of Proposition~\ref{prop:quotient_homotopy_equivalences}, as well as in a later section.
Lemma~\ref{lemma:products_and_quotients_for_locally_compact_Hausdorff} will only be used for proofs in this section.

\newpage

\begin{lem}\label{lemma:products_and_quotients_for_locally_compact_Hausdorff}
Suppose $q \colon X \to Y$ is a quotient map and $Z$ is a locally compact Hausdorff space. 
Then the product map $q \times 1_Z \colon X \times Z \to Y \times Z$ is a quotient map.
\end{lem}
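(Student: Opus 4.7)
The plan is to verify the universal property of quotient maps directly: since $q \times 1_Z$ is continuous and surjective automatically, it suffices to show that a subset $U \subseteq Y \times Z$ is open whenever its preimage $V = (q \times 1_Z)^{-1}(U)$ is open in $X \times Z$. I would fix an arbitrary $(y_0, z_0) \in U$ and construct an open neighborhood of $(y_0, z_0)$ lying inside $U$; since $(y_0, z_0)$ is arbitrary, this will force $U$ to be open.

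To construct the neighborhood, first choose any $x_0 \in q^{-1}(y_0)$, so that $(x_0, z_0) \in V$. Using local compactness of $Z$, select an open $N_Z \subseteq Z$ containing $z_0$ with compact closure $\overline{N_Z}$ small enough that $\{x_0\} \times \overline{N_Z} \subseteq V$ (take $\overline{N_Z}$ inside the $Z$-factor of a basic open neighborhood of $(x_0, z_0)$ in $V$). Define
\[
M = \{\, x \in X : \{x\} \times \overline{N_Z} \subseteq V \,\}.
\]
Two facts about $M$ are what drive the argument. First, $M$ is open in $X$: this is the tube lemma, applied to the compact $\overline{N_Z}$ and the open $V$, at each $x \in M$. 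Second, $M$ is saturated with respect to $q$: if $x \in M$ and $q(x') = q(x)$, then for any $z \in \overline{N_Z}$ we have $(x,z) \in V$, i.e.\ $(q(x), z) \in U$, so $(q(x'), z) = (q(x), z) \in U$, giving $(x', z) \in V$, hence $x' \in M$. The same saturation argument, applied starting from $x_0 \in M$, shows that in fact $q^{-1}(y_0) \subseteq M$.

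Since $M$ is open and saturated, $q(M)$ is open in $Y$ by the quotient property of $q$, and contains $y_0$. For each $x \in M$, the inclusion $\{x\} \times \overline{N_Z} \subseteq V$ pushes forward under $q \times 1_Z$ to $\{q(x)\} \times \overline{N_Z} \subseteq U$, so $q(M) \times N_Z \subseteq U$ is the desired open neighborhood of $(y_0, z_0)$. The main obstacle — really the only subtle point — is the interplay between the two hypotheses: local compactness of $Z$ is needed to ensure the tube lemma applies to a compact slice $\overline{N_Z}$ chosen small enough to sit inside $V$, and the fact that $V$ is automatically saturated (being a preimage) is what lets the pointwise-defined set $M$ descend to an open set $q(M)$ in $Y$. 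Without either hypothesis, the tube lemma alone produces only an open set in $X$ that need not be saturated, and therefore need not correspond to anything open downstairs.
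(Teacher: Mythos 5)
Your proof is correct. The paper itself does not prove this lemma but merely cites Lemma 4.72 of Lee's \emph{Introduction to Topological Manifolds}, and the argument you give -- choose a compact slice $\overline{N_Z}$ using local compactness, form the set $M$ of points whose tube over $\overline{N_Z}$ lies in $V$, use the tube lemma to see $M$ is open, observe that $M$ is saturated because $V$ is a preimage under $q \times 1_Z$, and then push forward to get the open box $q(M) \times N_Z \subseteq U$ -- is exactly the standard textbook proof being cited. All the steps check out: in particular, the saturation of $M$ is argued cleanly, and the final containment $q(M) \times N_Z \subseteq U$ follows because every $z \in N_Z$ lies in $\overline{N_Z}$, so $(x,z) \in V$ for each $x \in M$ and hence $(q(x), z) \in U$.
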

\begin{proof}
Lemma 4.72 of~\cite{Lee-Introduction_to_Topological_Manifolds}.
\end{proof}

\begin{lem}\label{lemma:homotopies_on_quotients}
Suppose $X$ is a topological space, $\sim$ is an equivalence relation on $X$, and $\sim'$ is an equivalence relation on $X \times I$ defined by $(x_1,t_1) \sim' (x_2,t_2)$ if and only if $x_1 \sim x_2$ and $t_1=t_2$.
Then we have a homeomorphism $(X / \sim) \times I \cong (X \times I) / \sim'$ defined by $([x],t) \mapsto [(x,t)]$.
\end{lem}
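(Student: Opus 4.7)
The plan is to exhibit both spaces as quotients of $X \times I$ by the same equivalence relation and then invoke the universal property of quotient maps. Let $q \colon X \to X/\sim$ denote the quotient map, and consider the product map $q \times 1_I \colon X \times I \to (X/\sim) \times I$. Since $I = [0,1]$ is a compact Hausdorff space, it is in particular locally compact Hausdorff, so Lemma~\ref{lemma:products_and_quotients_for_locally_compact_Hausdorff} applies and tells us that $q \times 1_I$ is itself a quotient map.

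Next I would identify the fibers of $q \times 1_I$ with the $\sim'$-equivalence classes. By definition, $(q \times 1_I)(x_1,t_1) = (q \times 1_I)(x_2,t_2)$ if and only if $q(x_1) = q(x_2)$ and $t_1 = t_2$, which is exactly the condition $(x_1,t_1) \sim' (x_2,t_2)$. So $(X/\sim) \times I$ is, as a set, in bijection with $(X \times I)/\sim'$ via the rule $([x],t) \mapsto [(x,t)]$, and this bijection fits into a commutative diagram in which $q \times 1_I$ factors through the quotient map $X \times I \to (X \times I)/\sim'$.

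To upgrade this bijection to a homeomorphism, I would apply the universal property of quotient maps in both directions. The composition $X \times I \to (X \times I)/\sim'$ is constant on the fibers of $q \times 1_I$, so it descends to a continuous map $(X/\sim) \times I \to (X \times I)/\sim'$ sending $([x],t)$ to $[(x,t)]$. Conversely, the map $q \times 1_I \colon X \times I \to (X/\sim) \times I$ is constant on $\sim'$-equivalence classes, so it descends to a continuous map $(X \times I)/\sim' \to (X/\sim) \times I$ sending $[(x,t)]$ to $([x],t)$. These two descended maps are inverses of each other, yielding the desired homeomorphism.

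The only real subtlety is making sure that the product-with-quotient map remains a quotient map, which is not automatic for arbitrary spaces but holds here because $I$ is locally compact Hausdorff. Once Lemma~\ref{lemma:products_and_quotients_for_locally_compact_Hausdorff} is in hand, the remainder is a routine diagram chase with the universal property, so this should not be the main obstacle.
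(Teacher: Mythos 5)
Your proof is correct and follows essentially the same route as the paper: apply Lemma~\ref{lemma:products_and_quotients_for_locally_compact_Hausdorff} to see that $q \times 1_I$ is a quotient map, note that its fibers coincide with the $\sim'$-classes, and then use the universal property of quotient maps in both directions to obtain the homeomorphism.
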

\begin{proof}
Let $q \colon X \to X / \sim$ and $q' \colon X \times I \to (X \times I) / \sim'$ be the quotient maps.
By Lemma~\ref{lemma:products_and_quotients_for_locally_compact_Hausdorff}, since $I$ is locally compact and Hausdorff, the map $q \times 1_I \colon X \times I \to (X / \sim) \times I$ is also a quotient map.
It can be checked that the function $f \colon (X / \sim) \times I \to (X \times I) / \sim'$ given by $([x],t) \mapsto [(x,t)]$ is well-defined and is a bijection, so we just must verify it is continuous and has a continuous inverse.
This follows from the universal property of quotients since the fibers of $q \times 1_I$ and $q'$ agree and we have both $f \circ (q \times 1_I) = q'$ and $f^{-1} \circ q' = q \times 1_I$.
\[
\begin{tikzcd}
X \times I \arrow[dd, "q\times1_I"'] \arrow[rdd, "q'"] &                    \\
                                                       &                    \\
(X / \sim) \times I \arrow[r, "f"']                    & (X \times I)/\sim'
\end{tikzcd}
\]
\end{proof}

We will use the following fact about pairs of spaces with the HEP, which establishes that certain quotient maps are homotopy equivalences.
This is a modest generalization of Proposition~0.17 from~\cite{Hatcher}, and we will mimic its proof.

\begin{prop}\label{prop:quotient_homotopy_equivalences}
Suppose $(X,A)$ has the HEP and suppose $H \colon A \times I \to A$ is a homotopy such that $H(\_\,,0) = 1_A$ and each $H(\_\,,t)$ sends each fiber of $H(\_\,,1)$ into a fiber of $H(\_\,,1)$.
Define an equivalence relation on $X$ by $x_1 \sim x_2$ if and only if either $x_1=x_2$ or $x_1, x_2 \in A$ and $H(x_1,1) = H(x_2,1)$.
Then the quotient map $q \colon X \to X/\sim$ is a homotopy equivalence.
\end{prop}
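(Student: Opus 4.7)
The plan is to adapt the proof of Hatcher's Proposition 0.17, which handles the special case where $H$ contracts $A$ to a point. First I would apply the HEP of $(X,A)$ to the homotopy $H$ together with the map $1_X \colon X \to X$ (which agrees with $H(\_\,,0) = 1_A$ on $A$) to obtain a homotopy $\tilde H \colon X \times I \to X$ satisfying $\tilde H(\_\,,0) = 1_X$ and $\tilde H|_{A \times I} = H$. This $\tilde H$ will supply both the candidate inverse and the first of the two required homotopies.

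Next I would construct $g \colon X/\sim \to X$ by factoring $\tilde H(\_\,,1)$ through $q$. For this, $\tilde H(\_\,,1)$ must be constant on fibers of $q$: on $X \setminus A$ the fibers are singletons, and for $a, a' \in A$ with $a \sim a'$ we have $\tilde H(a,1) = H(a,1) = H(a',1) = \tilde H(a',1)$ by the definition of $\sim$. The universal property of the quotient then yields a continuous $g$ with $g \circ q = \tilde H(\_\,,1)$, and $\tilde H$ itself directly witnesses the homotopy $1_X = \tilde H(\_\,,0) \simeq \tilde H(\_\,,1) = g \circ q$.

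The remaining step, showing $q \circ g \simeq 1_{X/\sim}$, is the main obstacle and the place where the fiber-preservation hypothesis is essential. The natural candidate is the homotopy obtained by descending $q \circ \tilde H \colon X \times I \to X/\sim$ through the product quotient map $q \times 1_I$. To descend, I need $q \circ \tilde H$ to be constant on fibers of $q \times 1_I$, equivalently: for each $t \in I$ and each pair $a \sim a'$ in $A$, I need $H(a,t) \sim H(a',t)$, i.e., $H(H(a,t),1) = H(H(a',t),1)$. This is exactly the assumption that $H(\_\,,t)$ carries the fiber of $H(\_\,,1)$ through $a$ into a single fiber of $H(\_\,,1)$. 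By Lemma~\ref{lemma:products_and_quotients_for_locally_compact_Hausdorff} the product $q \times 1_I$ is itself a quotient map (since $I$ is locally compact Hausdorff), so the universal property produces a continuous homotopy $\phi \colon (X/\sim) \times I \to X/\sim$ with $\phi([x],t) = q(\tilde H(x,t))$. Evaluating at $t=0$ gives $\phi(\_\,,0) = 1_{X/\sim}$, and at $t=1$ gives $\phi([x],1) = q(g(q(x))) = (q \circ g)([x])$, completing the proof that $q$ is a homotopy equivalence.
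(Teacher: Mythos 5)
Your proof is correct and follows essentially the same path as the paper: you extend $H$ via the HEP to a homotopy $\tilde H$ on $X$, factor $\tilde H(\_\,,1)$ through $q$ to obtain $g$, use $\tilde H$ for $g \circ q \simeq 1_X$, and descend $q \circ \tilde H$ through $q \times 1_I$ using the fiber-preservation hypothesis for $q \circ g \simeq 1_{X/\sim}$. The only cosmetic difference is that you invoke Lemma~\ref{lemma:products_and_quotients_for_locally_compact_Hausdorff} directly, whereas the paper packages that step via Lemma~\ref{lemma:homotopies_on_quotients}.
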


\begin{proof}
Apply the HEP to find a homotopy $G \colon X \times I \to X$ such that $G(\_\,,0) = 1_X$ and $G(a,t) = H(a,t)$ for all $(a,t) \in A \times I$.
Let $\sim'$ be an equivalence relation on $X \times I$ defined by $(x_1,t_1) \sim' (x_2,t_2)$ if and only if $x_1 \sim x_2$ and $t_1=t_2$.
Because each $H(\_\,,t)$ sends fibers of $H(\_\,,1)$ into fibers of $H(\_\,,1)$, each $G(\_\,,t)$ sends fibers of $q$ into fibers of $q$.
Thus, $q \circ G$ is constant on the fibers of the quotient map $X \times I \to (X \times I)/ \sim'$, so we get an induced map on the quotient.
By applying the homeomorphism of Lemma~\ref{lemma:homotopies_on_quotients}, we obtain a homotopy $\widetilde{G}$ such that the following diagram commutes for each $t$.

\[
\begin{tikzcd}
X \arrow[r, "{G(\_\,,t)}"] \arrow[dd, "q"']          & X \arrow[dd, "q"] \\
                                                     &                   \\
X/\sim \arrow[r, "{\widetilde{G}(\_\,,t)}"', dashed] & X/\sim           
\end{tikzcd}
\]
Since $G(\_\,,0) = 1_X$, we have $\widetilde{G}(\_\,,0) = 1_{X/\sim}$.
Furthermore, since $G(a,t) = H(a,t)$ for all $(a,t) \in A \times I$, we can see that $G(\_\,,1)$ is constant on the fibers of $q$, so we get a map $g \colon X/\sim \,\, \to X$ such that the following diagram commutes.
\[
\begin{tikzcd}
X \arrow[r, "{G(\_\,,1)}"] \arrow[dd, "q"']                           & X \arrow[dd, "q"] \\
                                                                      &                   \\
X/\sim \arrow[r, "{\widetilde{G}(\_\,,1)}"'] \arrow[ruu, "g", dashed] & X/\sim           
\end{tikzcd}
\]
Therefore $g \circ q \simeq 1_X$ via $G$ and $q \circ g \simeq 1_{X/\sim}$ via $\widetilde{G}$, so $q$ is a homotopy equivalence.
\end{proof}

The next proposition will allow for further use of the HEP in combination with quotient maps.

\begin{prop}\label{prop:HEP_for_quotients}
Let $(X,A)$ be a pair of spaces with the HEP and let $\sim$ be an equivalence relation on $X$ with quotient map $q \colon X \to X / \sim\,$. 
If for each $x \in X - A$ the equivalence class of $x$ is the singleton $\{ x \}$, then the pair $(X/\sim\,,\, q(A))$ has the HEP.
\end{prop}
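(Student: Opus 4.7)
The plan is to verify the HEP for $(X/\sim,\, q(A))$ directly from the definition by pulling back the given data to $X$, applying the HEP of $(X,A)$, and then pushing the resulting homotopy back down through the quotient. Concretely, suppose we are given a homotopy $H \colon q(A) \times I \to Z$ and a map $f \colon X/\sim \,\to Z$ with $f(y) = H(y,0)$ for all $y \in q(A)$. I define $\widetilde{H} \colon A \times I \to Z$ by $\widetilde{H}(a,t) = H(q(a),t)$, which is continuous since $q|_A \colon A \to q(A)$ is continuous, and I define $\widetilde{f} = f \circ q \colon X \to Z$. These agree at $t=0$ on $A$, so the HEP of $(X,A)$ produces a homotopy $\widetilde{G} \colon X \times I \to Z$ with $\widetilde{G}(\_\,,0) = \widetilde{f}$ and $\widetilde{G}|_{A \times I} = \widetilde{H}$.

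The heart of the argument is then to show that $\widetilde{G}$ descends to a continuous map on $(X/\sim) \times I$. Let $\sim'$ be the equivalence relation on $X \times I$ from Lemma~\ref{lemma:homotopies_on_quotients}, so that $(x_1,t_1) \sim' (x_2,t_2)$ iff $x_1 \sim x_2$ and $t_1 = t_2$. I claim $\widetilde{G}$ is constant on $\sim'$-equivalence classes. Suppose $(x_1,t) \sim' (x_2,t)$ with $x_1 \neq x_2$; by the singleton hypothesis on $X - A$, both $x_1$ and $x_2$ must lie in $A$, and then
\[
\widetilde{G}(x_1,t) = \widetilde{H}(x_1,t) = H(q(x_1),t) = H(q(x_2),t) = \widetilde{H}(x_2,t) = \widetilde{G}(x_2,t),
\]
using $q(x_1) = q(x_2)$. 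Hence $\widetilde{G}$ factors through the quotient $X \times I \to (X \times I)/\sim'$, and composing with the inverse of the homeomorphism $(X/\sim) \times I \cong (X \times I)/\sim'$ from Lemma~\ref{lemma:homotopies_on_quotients} yields a continuous map $G \colon (X/\sim) \times I \to Z$ characterized by $G(q(x),t) = \widetilde{G}(x,t)$.

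It remains only to check the two boundary conditions. For $t = 0$ and any $x \in X$, $G(q(x),0) = \widetilde{G}(x,0) = \widetilde{f}(x) = f(q(x))$, so $G(\_\,,0) = f$ on all of $X/\sim$. For $a \in A$ and $t \in I$, $G(q(a),t) = \widetilde{G}(a,t) = \widetilde{H}(a,t) = H(q(a),t)$, so $G|_{q(A) \times I} = H$. Thus $G$ is the required extension, and $(X/\sim,\, q(A))$ has the HEP.

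The main obstacle is the step verifying that $\widetilde{G}$ is $\sim'$-invariant; this is where the hypothesis that equivalence classes outside $A$ are singletons is essential, because it confines all nontrivial identifications to $A$, where $\widetilde{G}$ is controlled by $\widetilde{H}$ and therefore inherits invariance from the fact that $\widetilde{H}$ was defined through $q$. The only other subtlety is invoking Lemma~\ref{lemma:homotopies_on_quotients} (and implicitly Lemma~\ref{lemma:products_and_quotients_for_locally_compact_Hausdorff}) to ensure the quotient of the product is the product of the quotient with $I$, which is what legitimizes reading $G$ as an honest homotopy on $(X/\sim) \times I$.
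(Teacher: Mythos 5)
Your proof is correct, but it takes a genuinely different route from the paper. The paper instead invokes the retraction characterization of the HEP (that $(X,A)$ has the HEP iff there is a retraction $r \colon X \times I \to X \times \{0\} \cup A \times I$), takes the retraction $r$ furnished by the hypothesis, and shows that $(q \times 1_I)\vert_{X\times\{0\}\cup A\times I}\circ r$ is constant on the fibers of the quotient map $q\times 1_I$, which by the universal property yields a retraction $\widetilde r$ onto $(X/\sim)\times\{0\}\cup q(A)\times I$; the singleton hypothesis enters in showing the fiber-constancy, exactly as it does in your $\sim'$-invariance check. Your approach works directly from the definition of the HEP: you pull back the data $(H,f)$ along $q$, use the HEP of $(X,A)$ to build $\widetilde G$, observe that nontrivial identifications are confined to $A$ (where $\widetilde G = \widetilde H$ factors through $q$) so $\widetilde G$ is $\sim'$-invariant, and descend via Lemma~\ref{lemma:homotopies_on_quotients}. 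The paper's version is a one-step construction that avoids any quantification over target spaces $Z$, while yours is arguably more transparent about where the hypothesis is used and does not require recalling the retraction characterization; both ultimately rest on the same ingredients, namely the singleton hypothesis and the fact (from Lemma~\ref{lemma:products_and_quotients_for_locally_compact_Hausdorff}) that $q\times 1_I$ is a quotient map.
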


\begin{proof}
A pair $(X,A)$ has the HEP if and only if there exists a retraction $r \colon X \times I \to X \times \{0\} \cup A \times I$ (see Proposition A.18 of~\cite{Hatcher}).
We will find a map $\widetilde{r}$ making the following diagram commute.
\[
\begin{tikzcd}
X \times I \arrow[rr, "r"] \arrow[ddd, "q \times 1_I"'] &  & X\times \{0\} \cup A \times I \arrow[ddd, "(q \times 1_I)\vert_{X\times \{0\} \cup A \times I}"] \\
                                                        &  &                                                                                                  \\
                                                        &  &                                                                                                  \\
(X/\sim) \times I \arrow[rr, "\widetilde{r}"', dashed]  &  & (X / \sim) \times \{0\} \cup (A / \sim) \times I                                                
\end{tikzcd}
\]

By Lemma~\ref{lemma:products_and_quotients_for_locally_compact_Hausdorff}, the map $q \times 1_I$ is a quotient map, so by the universal property of quotients, it is sufficient to show that $(q \times 1_I)\vert_{X\times \{0\} \cup A \times I} \circ r$ is constant on the fibers of $q \times 1_I$.
This follows from the fact that $r$ is constant on $A \times I$ and each $x \in X - A$ is the only element of its equivalence class.
Finally, $\widetilde{r}$ is a retraction because $r$ is, so the pair $(X/\sim\,,\, q(A))$ has the HEP.
\end{proof}

\vspace{.25cm}
\subsection{The Homotopy Extension Property for $\big( \vrmleq{S^1}{r}, W_{2k+1}(r) \big)$}\label{subsection:HEP_for_(VRm,W)}

In order to apply the ideas above in later sections, we will first demonstrate that certain pairs of spaces within $\vrmcircleq$ have the HEP.
We will use the fact that a pair $(X,A)$ has the HEP if and only if $X \times \{ 0 \} \cup A \times I$ is a retract of $X \times I$ (Proposition A.18 of~\cite{Hatcher}).
For any $n \geq 1$, let $\Delta^n \subset \mathbb{R}^n$ be a regular $n$-simplex centered at the origin.
We can first obtain retractions that demonstrate $(\Delta^n, \partial \Delta^n)$ has the HEP similar to the retractions used in Proposition 0.16 of~\cite{Hatcher}.
Let $\lambda_n \colon \Delta^n \times I \to \Delta^n \times \{0\} \cup \partial \Delta^n \times I$ be the map defined by projecting radially from the point $(\vec{0},2) \in \Delta^n \times \mathbb{R}$, where $\Delta^n \times \mathbb{R}$ is considered as a subspace of $\mathbb{R}^{n+1}$.
Then $\lambda_n$ is continuous, and if the vertices of $\Delta^n$ are $v_0, \dots, v_n$, then $\lambda_n$ has the form 
\[
\lambda_n \left( \sum_{i=0}^n a_i v_i , t\right) = 
\left( \sum_{i=0}^n \lambda_{n,i}(a_0, \dots, a_n, t) v_i , \sigma_n(a_0, \dots, a_n, t)\right),
\]
where $\sigma_n \colon \Delta^n \times I \to I$ is continuous and the barycentric coordinates $\lambda_{n,i} \colon \Delta^n \times I \to I$ are continuous.
Any point in the codomain $\Delta^n \times \{0\} \cup \partial \Delta^n \times I$ has at least one of the barycentric coordinates or the coordinate for $I$ equal to zero, so for any $\left( \sum_{i=0}^n a_i v_i , t\right) \in \Delta^n \times I$, either $\sigma_n(a_0, \dots, a_n, t) = 0$ or $\lambda_{n,i}(a_0, \dots, a_n, t) = 0$ for some $i$.
Furthermore, $\lambda_n$ respects the symmetry of $\Delta^n$, so any permutation of the vertices $v_i$ does not affect the definition.
Since $\lambda_n$ fixes points in $\Delta^n \times \{0\} \cup \partial \Delta^n \times I$, it is a retraction; specifically, $\lambda_n \left( \sum_{i=0}^n a_i v_i , t\right) = \left( \sum_{i=0}^n a_i v_i , t\right)$ if either $t=0$ or $a_i = 0$ for some $i$.

We extend the ideas above to subsets of $\vrmcircleq$.
Recall that we have defined $\partial V_{2k+1} = \overline{V_{2k+1}} - V_{2k+1}$ and that $\partial V_{2k+1}$ is the boundary of $V_{2k+1}$ in $W_{2k+1}$, although it is not necessarily the boundary in $\vrmcircleq$.
For each $k \geq 1$, we define a retraction $\rho_{2k+1} \colon \overline{V_{2k+1}} \times I \to  \overline{V_{2k+1}} \times \{0\} \cup \partial V_{2k+1} \times I$ based on $\lambda_{2k}$.
With measures written in $(2k+1)$-arc mass form, define
\[
\rho_{2k+1} \left( \sum_{i=0}^{2k} a_i \mu_i , t\right) = 
\left( \sum_{i=0}^{2k} \lambda_{2k, i}(a_0, \dots, a_{2k}, t) \mu_i ,\, \sigma_{2k}(a_0, \dots, a_{2k}, t)\right).
\]
Since for any $a_0, \dots, a_{2k},t$, either $\sigma_{2k}(a_0, \dots, a_{2k}, t)=0$ or $\lambda_{2k, i}(a_0, \dots, a_{2k}, t) = 0$ for some $i$, $\rho_{2k+1}$ does in fact send points into $\overline{V_{2k+1}} \times \{0\} \cup \partial V_{2k+1} \times I$.
Each measure $\mu \in \overline{V_{2k+1}}$ may be expressed in arc mass form $\mu = \sum_{i=0}^{2k} a_i \mu_i$ in multiple ways, either by permuting indices or by a choice of $\mu_i$ when $a_i = 0$; we must check that the definition of $\rho_{2k+1}$ does not depend on the choice of how $\mu$ is written. 
First, if $a_i = 0$ for some $i$, then as described above, $\lambda_{2k} \left( \sum_{i=0}^{2k} a_i v_i , t\right) = \left( \sum_{i=0}^{2k} a_i v_i , t\right)$, which implies $\rho_{2k+1}(\mu,t) = (\mu,t)$. 
Thus, if $a_i = 0$ for some $i$, then $\rho(\mu,t)$ is uniquely defined.  
If $a_i \neq 0$ for each $i$, then $\mu$ has $2k+1$ arcs, and thus two different ways of expressing $\mu$ in arc mass form must be the same up to a permutation of indices.
By the symmetry of $\lambda_{2k}$, permuting the set of indices does not affect the value of $\rho(\mu,t)$.
Therefore $\rho_{2k+1}$ is a well-defined function.

\begin{lem}
\label{lemma:HEP_for_V}
For each $k \geq 1$, the function $\rho_{2k+1} \colon \overline{V_{2k+1}} \times I \to  \overline{V_{2k+1}} \times \{0\} \cup \partial V_{2k+1} \times I$ is a retraction, and thus the pair $(\overline{V_{2k+1}(r)}, \partial V_{2k+1}(r))$ has the homotopy extension property.
\end{lem}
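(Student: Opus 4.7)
The plan is to verify first that $\rho_{2k+1}$ is a continuous retraction onto $\overline{V_{2k+1}} \times \{0\} \cup \partial V_{2k+1} \times I$; the HEP conclusion then follows from the criterion of Proposition~A.18 of \cite{Hatcher}. The retraction property itself is immediate from what was already observed about $\lambda_{2k}$: whenever $t=0$ or some arc mass $a_i$ vanishes, $\lambda_{2k}$ fixes $\bigl(\sum a_i v_i,t\bigr)$, so $\rho_{2k+1}(\mu,t) = (\mu,t)$ for every $(\mu,t)$ in the target subspace. Thus the real content is continuity of $\rho_{2k+1}$, which I would verify in two cases depending on whether the basepoint lies over $V_{2k+1}$ or over $\partial V_{2k+1}$.

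For $(\mu,t) \in V_{2k+1}\times I$, the arc mass form $\mu = \sum a_i \mu_i$ is essentially unique and every $a_i$ is positive. Apply Lemma~\ref{lemma:close_arc_masses}(\ref{arc_mass_lemma_item_close_arc_masees}) to fix disjoint closed arcs $A_0,\dots,A_{2k}$ so that every $\nu \in V_{2k+1}$ in a Wasserstein neighborhood of $\mu$ admits an arc mass form $\nu = \sum b_i \nu_i$ with $\supp(\nu_i) \subseteq A_i$, $b_i = \nu(A_i)$ close to $a_i$, and $\nu_i = \tfrac{1}{b_i}\nu|_{A_i}$. The assignment $\nu \mapsto (b_i,\nu_i)_i$ is continuous on this neighborhood, and composition with the continuous functions $\lambda_{2k,i}$ and $\sigma_{2k}$ produces coefficients $c_i = \lambda_{2k,i}(b_\bullet,t')$ depending continuously on $(\nu,t')$. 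A standard transport-plan estimate gives
\[
d_W\!\left(\sum c_i\nu_i,\,\sum c_i'\nu_i'\right) \leq \pi\sum_i |c_i - c_i'| \;+\; \sum_i \min(c_i,c_i')\,d_W(\nu_i,\nu_i'),
\]
where the $\pi$ bounds the diameter of $S^1$ and absorbs redistribution across arcs; both sums tend to zero, establishing continuity at $(\mu,t)$.

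For $(\mu,t) \in \partial V_{2k+1}\times I$, fix an arc mass form $\mu = \sum a_i \mu_i$ with at least one $a_{i_0}=0$, so $\rho_{2k+1}(\mu,t)=(\mu,t)$. Let $(\nu_n,t_n)\to(\mu,t)$ in $\overline{V_{2k+1}}\times I$. Terms with $\nu_n \in \partial V_{2k+1}$ are handled directly by the retraction property. For $\nu_n \in V_{2k+1}$, write $\nu_n = \sum b_{n,i}\nu_{n,i}$ and use parts (\ref{lemma_item_support_points}) and (\ref{lemma_item_close_arc_masses}) of Lemma~\ref{lemma:close_measures_implies_close_masses}, applied to $\nu_n$ with the larger arc structure, to obtain expanded $\nu_n$-arcs $A_{n,0},\dots,A_{n,2k}$ that cover $\supp(\mu)$ and satisfy $|\mu(A_{n,i}) - b_{n,i}| \to 0$. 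After reindexing, this forces $b_{n,i} \to a_i$ for every $i$, including $b_{n,i_0}\to 0$. Since $\lambda_{2k}$ is the identity on the face $\{a_{i_0}=0\}$ of $\Delta^{2k}$, continuity of $\lambda_{2k}$ and $\sigma_{2k}$ at $(a_\bullet,t)$ gives $c_{n,i}:=\lambda_{2k,i}(b_{n,\bullet},t_n)\to a_i$ and $\sigma_{2k}(b_{n,\bullet},t_n)\to t$. Then
\[
d_W\!\left(\sum c_{n,i}\nu_{n,i},\;\mu\right) \;\leq\; \pi\sum_i |c_{n,i}-b_{n,i}| \;+\; d_W(\nu_n,\mu) \;\longrightarrow\; 0,
\]
so $\rho_{2k+1}(\nu_n,t_n)\to(\mu,t)$.

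The main obstacle is the boundary case, specifically promoting Lemma~\ref{lemma:close_arc_masses}(\ref{arc_mass_lemma_item_boundary}) from the mere statement that some arc mass is small to a full reindexed comparison $b_{n,i}\to a_i$ between the $(2k+1)$ arcs of each $\nu_n$ and the $(2k+1)$-arc mass form of $\mu$ (which has fewer genuine arcs). The key observation that makes this manageable is that the ``extra'' components $\nu_{n,i_0}$ are weighted by a vanishing mass, and the structural fact that $\lambda_{2k}$ equals the identity on every face of $\Delta^{2k}$ is precisely what converts this vanishing weight into vanishing contribution in the limit.
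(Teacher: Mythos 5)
The overall shape of your argument matches the paper's: verify the retraction property (immediate from what $\lambda_{2k}$ does on faces), then establish continuity of $\rho_{2k+1}$ by sequences, splitting into the cases $\mu \in V_{2k+1}$ and $\mu \in \partial V_{2k+1}$. Your interior case is essentially the paper's, though you assert continuity of the assignment $\nu \mapsto (b_i, \nu_i)_i$ without the weak-convergence argument the paper supplies (truncating the test function to the expanded arcs and invoking Lemma~\ref{lemma:weak_convergence_and_Wasserstein_convergence}); that is a small elision, not a flaw. Your closing Wasserstein estimate in the boundary case is also correct and is in spirit the estimate the paper uses.

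The genuine gap is the step ``After reindexing, this forces $b_{n,i} \to a_i$ for every $i$.'' Neither Lemma~\ref{lemma:close_measures_implies_close_masses}(\ref{lemma_item_close_arc_masses}) nor Lemma~\ref{lemma:close_arc_masses}(\ref{arc_mass_lemma_item_boundary}) gives this. Part~(\ref{lemma_item_close_arc_masses}) yields disjoint expanded $\nu_n$-arcs $A_{n,0},\dots,A_{n,2k}$ with $\supp(\mu)\subseteq \bigcup_i A_{n,i}$ and $|\mu(A_{n,i}) - b_{n,i}|\to 0$, but the vector $(\mu(A_{n,i}))_i$ depends on $n$: for $\mu\in\partial V_{2k+1}$ the $(2k+1)$-arc mass form is genuinely non-unique, and the $2k+1$ arcs of $\nu_n$ may partition the support points within a single $\mu$-arc differently for different $n$ (the extra, vanishing-mass support points of $\nu_n$ create excluded regions whose location inside a $\mu$-arc can oscillate). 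Part~(\ref{arc_mass_lemma_item_boundary}) only says $\min_i b_{n,i}\to 0$; it does not identify a limit. So your appeal to ``continuity of $\lambda_{2k}$ and $\sigma_{2k}$ at $(a_\bullet,t)$'' has no fixed $a_\bullet$ to evaluate at. The observation you flag at the end --- that $\lambda_{2k}$ is the identity on every face of $\Delta^{2k}$ --- is exactly right, but it needs to be used uniformly rather than pointwise. The paper's route: since $\partial\Delta^{2k}\times I$ is compact and $\lambda_{2k}$ restricts to the identity there, the continuous function $\widetilde\lambda_{2k}(x,t)=\lambda_{2k}(x,t)-(x,t)$ is small on an entire $\eta$-neighborhood of $\partial\Delta^{2k}\times I$. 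Then Lemma~\ref{lemma:close_arc_masses}(\ref{arc_mass_lemma_item_boundary}) places $(b_{n,\bullet},t_n)$ in such a neighborhood for large $n$, which gives $\sum_i|c_{n,i}-b_{n,i}|\to 0$ directly, with no claim that $b_{n,\bullet}$ converges. That uniform estimate, plugged into your final display, closes the argument; your stated route through a fixed arc mass form of $\mu$ does not.
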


\begin{proof}
Since $\lambda_{2k}$ is a retraction, $\rho_{2k+1}$ fixes all points of $\overline{V_{2k+1}} \times \{0\} \cup \partial V_{2k+1} \times I$, as required.
We need to show it is continuous.
We will suppose $\{ (\nu_n,t_n) \}_{n \geq 0}$ is a sequence in $\overline{V_{2k+1}} \times I$ that converges to $(\mu, t) \in \overline{V_{2k+1}} \times I$ and check that $\rho_{2k+1}(\nu_n, t_n)$ converges to $\rho_{2k+1}(\mu, t)$, splitting into cases for when $\mu \in V_{2k+1}$ and when $\mu \in \partial V_{2k+1}$.

For the first case, suppose $\mu \in V_{2k+1}$ and write $\mu$ in $(2k+1)$-arc mass form as $\mu = \sum_{j=0}^{2k} a_j \mu_j$.
Then $a_j \neq 0$ for all $j$, since $\mu \in V_{2k+1}$.
Applying Lemma~\ref{lemma:close_arc_masses}(\ref{arc_mass_lemma_item_close_arc_masees}), we see that for all large enough $n$, we can write each $\nu_n$ in arc mass form as $\nu_n = \sum_{j=0}^{2k} a_{n,j} \nu_{n,j}$ such that $\lim_{n \to \infty} a_{n,j} = a_j$ for each $j$.
As in the lemma, the $\nu_{n,j}$ can be chosen so that expanding the $\nu_n$-arc containing $\supp(\nu_{n,j})$ by $\frac{\pi-r}{2}$ on either side produces an arc that contains $\supp(\mu_j)$.
Furthermore, we show $\nu_{n,j}$ converges weakly to $\mu_j$ for each $j$.
Let $A_j$ be the $\mu$-arc containing $\supp(\mu_j)$. 
Define $A'_j$ by expanding $A_j$ by $\frac{\pi-r}{4}$ on either side, and define $A''_j$ by expanding $A_j$ by $\frac{\pi-r}{2}$ on either side.
For all large enough $n$, $\supp(\nu_{n,j})$ is contained in $A'$, since by Lemma~\ref{lemma:close_measures_implies_close_masses}(\ref{lemma_item_support_points}), expanding all open arcs excluded by $\nu_{n}$ by $\frac{\pi-r}{4}$ covers all points excluded by $\mu$.
Any bounded continuous function $f \colon S^1 \to \mathbb{R}$ can be replaced with a bounded continuous function $\widetilde{f}$ equal to $f$ on $A'_j$ and with $\supp(\widetilde{f}) \subseteq A''_j$. 
Then $\int_{S^1} f \,d\nu_{n,j} = \int_{S^1} \widetilde{f} \,d\nu_n$ for all large enough $n$, so
\[
\lim_{n \to \infty} \int_{S^1} f \,d\nu_{n,j} = \lim_{n \to \infty} \int_{S^1} \widetilde{f} \,d\nu_n = \int_{S^1} \widetilde{f} \,d\mu = \int_{S^1} f \,d\mu_j,
\]
where the second equality follows from Lemma~\ref{lemma:weak_convergence_and_Wasserstein_convergence}.
Therefore $\nu_{n,j}$ converges weakly to $\mu_j$ for each $j$.

Since $a_{n,j}$ approaches $a_j$ for each $j$, continuity of each $\lambda_{2k,i}$ and $\sigma_{2k}$ show that $\lambda_{2k,i}(a_{n,0}, \dots, a_{n,2k}, t_n)$ approaches $\lambda_{2k,i}(a_{0}, \dots, a_{2k}, t)$ for each $i$ and $\sigma_{2k}(a_{n,0}, \dots, a_{n,2k}, t_n)$ approaches $\sigma_{2k}(a_{0}, \dots, a_{2k}, t)$ as $n$ approaches infinity.
Then since $\nu_{n,j}$ converges weakly to $\mu_j$ for each $j$, Lemma~\ref{lemma:weak_convergence_and_Wasserstein_convergence} shows the components $\sum_{i=0}^{2k} \lambda_{2k, i}(a_{n,0}, \dots, a_{n,2k}, t_n) \nu_{n,i}$ converge in the Wasserstein distance to $\sum_{i=0}^{2k} \lambda_{2k, i}(a_0, \dots, a_{2k}, t) \mu_i$, which is the first component of $\rho_{2k+1}(\mu,t)$.
We have thus shown both components of $\rho_{2k+1}(\nu_n, t_n)$ converge, so $\rho_{2k+1}(\nu_n, t_n)$ converges to $\rho_{2k+1}(\mu, t)$, as required.

We now consider the second case, where $\mu \in \partial V_{2k+1}$, and we have previously noted this means $\rho_{2k+1}(\mu,t) = (\mu, t)$.
First, we determine how $\lambda_{2k}$ behaves near $\partial \Delta^{2k} \times I$.
Since $\lambda_{2k}$ is continuous, we have a continuous function $\widetilde{\lambda}_{2k} \colon \Delta^{2k} \times I \to \mathbb{R}^{2k+1}$ given by $\widetilde{\lambda}_{2k}(x,t) = \lambda_{2k}(x,t)-(x,t)$.
For any open neighborhood of $\vec{0}$ in $\mathbb{R}^{2k+1}$, the preimage under $\widetilde{\lambda}_{2k}$ is an open set that contains the compact set $\partial \Delta^{2k} \times I$ and thus contains an open ball around this compact set\footnote{This is a general fact about compact subsets of metric spaces: see, for instance, Exercise 2 in Section 27 of~\cite{Munkres}.}.
Therefore, for any $\varepsilon > 0$, there is an $\eta > 0$ such that if $(a_0, \dots, a_{2k}, t) \in \Delta^{2k} \times I$ with $a_j < \eta$ for some $j$, then $|\lambda_{2k,i}(a_0, \dots, a_{2k},t) - a_i| < \varepsilon$ for all $i$ and $|\sigma_{2k}(a_0, \dots, a_{2k},t) - t| < \varepsilon$.

We apply this fact to describe the image of the sequence $\{(\nu_n, t_n)\}$ under $\rho_{2k+1}$.
Again, write each $\nu_n$ in arc mass form as $\nu_n = \sum_{j=0}^{2k} a_{n,j} \nu_{n,j}$.
Temporarily write the first component of $\rho_{2k+1}$ as a map $\omega_{2k+1} \colon \overline{V_{2k+1}} \times I \to \overline{V_{2k+1}}$, so that \[\omega_{2k+1}\left( \nu_n , t_n \right) = \sum_{i=0}^{2k} \lambda_{2k, i}(a_{n,0}, \dots, a_{n,2k}, t_n) \nu_{n,i}.\]
By Lemma~\ref{lemma:close_arc_masses}(\ref{arc_mass_lemma_item_boundary}) and the fact that $\nu_n$ converges to $\mu \in \partial V_{2k+1}$, given any $\eta > 0$, for all sufficiently large $n$, we have $a_{n,j} < \eta$ for some $j$.
Applying the fact above, this shows that given any $\varepsilon > 0$, for all sufficiently large $n$, we have $|\lambda_{2k,i}(a_{n,0}, \dots, a_{n,2k},t_n) - a_{n,i}| < \varepsilon$ for all $i$ and $|\sigma_{2k}(a_{n,0}, \dots, a_{n,2k},t_n) - t_n| < \varepsilon$.
Simple bounds on the Wasserstein distance show that this implies $\omega_{2k+1}(\nu_n,t_n)$ is arbitrarily close to $\nu_n$ for all sufficiently large $n$.
Combined with the fact that $(\nu_n,t_n)$ converges to $(\mu,t) = \rho_{2k+1}(\mu,t)$, this shows $\rho_{2k+1}(\nu_n, t_n)$ converges to $\rho_{2k+1}(\mu,t)$.
\end{proof}

We use Lemma~\ref{lemma:HEP_for_V} to prove the fact we will use in later sections, that $\big(\vrmcircleq, W_{2k+1} \big)$ has the homotopy extension property for each $k$.
Recall we have defined $K = K(r)$ to be the smallest integer such that $\vrmcircleq = W_{2K+1}$.
For each $k \geq 1$, we extend the retraction $\rho_{2k+1} 
$ to a retraction 
\[\widetilde{\rho}_{2k+1} \colon W_{2K+1} \times \{0\} \cup W_{2k+1} \times I \to W_{2K+1} \times \{0\} \cup W_{2k-1} \times I\] 
defined by
\[
\widetilde{\rho}_{2k+1}(\mu,t) =
\begin{cases}
\rho_{2k+1}(\mu,t) & \text{ if $(\mu,t) \in \overline{V_{2k+1}} \times I$}\\
(\mu,t) & \text{ if $(\mu,t) \in W_{2K+1} \times \{0\} \cup W_{2k-1} \times I$}.
\end{cases}
\]
We have defined $\widetilde{\rho}_{2k+1}$ on two closed subsets of $W_{2K+1} \times \{0\} \cup W_{2k+1} \times I$, since $W_{2k-1} \times I$ is closed for each $k$ by Lemma~\ref{lemma:V_1_to_V_2k+1_closed}.
The intersection is given by
\[
\overline{V_{2k+1}} \times I \cap \big( W_{2K+1} \times \{0\} \cup W_{2k-1} \times I \big)  
= \overline{V_{2k+1}} \times \{0\} \cup \partial V_{2k+1} \times I
\]
and $\rho_{2k+1}$ is constant on this intersection by Lemma~\ref{lemma:HEP_for_V}, which shows $\widetilde{\rho}_{2k+1}$ is well-defined.  
Again by Lemma~\ref{lemma:HEP_for_V}, the definitions on the two closed sets are continuous, so $\widetilde{\rho}_{2k+1}$ is continuous.
By definition, all points of $W_{2K+1} \times \{0\} \cup W_{2k-1} \times I$ are fixed, so $\widetilde{\rho}_{2k+1}$ is in fact a retraction for each $k$.
By applying these retractions in decreasing order starting with $\widetilde{\rho}_{2K+1}$, we obtain a retraction
\[\widetilde{\rho}_{2k+3}\circ\ldots\circ\widetilde{\rho}_{2K+1}\colon W_{2K+1} \times I \to W_{2K+1} \times \{0\} \cup W_{2k+1} \times I\]
for any $0 \leq k < K$.
Thus, Lemma~\ref{lemma:HEP_for_V} implies the following.

\begin{prop}
\label{prop:HEP_for_(vrm,W)}
For any $k \geq 0$, there exists a retraction
\[
\vrmleq{S^1}{r} \times I \longrightarrow  \vrmleq{S^1}{r} \times \{0\}  \cup  W_{2k+1}(r) \times I,
\]
and thus the pair $\big( \vrmleq{S^1}{r}, W_{2k+1}(r) \big)$ has the homotopy extension property. 
\end{prop}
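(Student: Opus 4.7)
The plan is to build the retraction incrementally, peeling off the top-dimensional strata $\overline{V_{2j+1}}$ one at a time, and using Lemma~\ref{lemma:HEP_for_V} as the essential local building block. Since a pair $(X,A)$ has the HEP if and only if $X \times \{0\} \cup A \times I$ is a retract of $X \times I$ (Proposition~A.18 of~\cite{Hatcher}), it suffices to construct such a retraction for the pair $\big(\vrmleq{S^1}{r}, W_{2k+1}(r)\big)$.

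For each $j$ with $k < j \le K$, Lemma~\ref{lemma:HEP_for_V} supplies a retraction
\[
\rho_{2j+1} \colon \overline{V_{2j+1}} \times I \longrightarrow \overline{V_{2j+1}} \times \{0\} \cup \partial V_{2j+1} \times I.
\]
I would extend $\rho_{2j+1}$ to a map $\widetilde{\rho}_{2j+1}$ on $W_{2K+1} \times \{0\} \cup W_{2j+1} \times I$ by declaring it equal to $\rho_{2j+1}$ on $\overline{V_{2j+1}} \times I$ and equal to the identity on $W_{2K+1} \times \{0\} \cup W_{2j-1} \times I$. Both of these subsets are closed in the domain --- the second because $W_{2j-1}$ is closed by Lemma~\ref{lemma:V_1_to_V_2k+1_closed} --- and together they cover the domain. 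Their intersection is exactly $\overline{V_{2j+1}} \times \{0\} \cup \partial V_{2j+1} \times I$, on which $\rho_{2j+1}$ already acts as the identity. The pasting lemma then yields a continuous retraction $\widetilde{\rho}_{2j+1}$ with image $W_{2K+1} \times \{0\} \cup W_{2j-1} \times I$.

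Finally, I would compose these retractions in decreasing order: starting with $\vrmleq{S^1}{r} \times I = W_{2K+1} \times I$, apply $\widetilde{\rho}_{2K+1}$, then $\widetilde{\rho}_{2K-1}$, and so on down to $\widetilde{\rho}_{2k+3}$. Each step lowers the $W$-index by two while leaving the previously collapsed region fixed, so the composition is a retraction onto $\vrmleq{S^1}{r} \times \{0\} \cup W_{2k+1}(r) \times I$, which is precisely what the HEP requires.

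The main obstacle is really carried by Lemma~\ref{lemma:HEP_for_V} (whose continuity argument depends on the control of arc masses near $\partial V_{2j+1}$ given by Lemma~\ref{lemma:close_arc_masses}). By comparison, the present proposition is a straightforward gluing argument, and the only subtle points are verifying that $\rho_{2j+1}$ restricts to the identity on the overlap (ensuring $\widetilde{\rho}_{2j+1}$ is well defined) and that the strata $W_{2j-1}$ are closed (so that the two pieces on which $\widetilde{\rho}_{2j+1}$ is defined are closed and the pasting lemma applies).
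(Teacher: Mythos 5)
Your proposal is correct and matches the paper's own proof essentially step for step: the same extension of $\rho_{2j+1}$ by the identity along the closed pieces $\overline{V_{2j+1}}\times I$ and $W_{2K+1}\times\{0\}\cup W_{2j-1}\times I$, the same computation that they overlap exactly on $\overline{V_{2j+1}}\times\{0\}\cup\partial V_{2j+1}\times I$ where $\rho_{2j+1}$ is already the identity, and the same composition $\widetilde{\rho}_{2k+3}\circ\dots\circ\widetilde{\rho}_{2K+1}$ in decreasing order.
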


\vspace{.15cm}
\section{Collapse to Regular Polygons}\label{section:collapse}

For each $k\geq 0$ and any $r \in [0,\pi)$ such that $V_{2k+1}(r)$ is nonempty, we define a homotopy that collapses $V_{2k+1}$ to the set of regular polygonal measures $P_{2k+1}$.
We first define a support homotopy (see Section~\ref{subsection: vrm and support homotopies}).
Let $\mu \in V_{2k+1}$.
We will choose a coordinate system $(x,\tau)$ with $x \colon S^1 - \{ [\theta_0] \} \to \mathbb{R}$.
If $k=0$, we can choose $[\theta_0]$ to be any point excluded by $\mu$ and let $A_0$ be the single $\mu$-arc.
Otherwise, let $A_0, A_1, \dots, A_{2k}$ be the $\mu$-arcs, in counterclockwise order around the circle and with $[\theta_0]$ chosen strictly between the two closest support points of $A_{2k}$ and $A_0$.
Let $v_{2k+1}^{x,\mu} \colon S^1 \to \mathbb{R}$ be a function such that $[\theta] \in A_{v_{2k+1}^{x,\mu}([\theta])}$ for any $[\theta]$ belonging to any $A_i$. 
Then $v_{2k+1}^{x,\mu}$ is constant on the arcs, and we can choose it to be continuous.
Define $m_{2k+1}^x \colon V_{2k+1} \to \mathbb{R}$ by
\[
m_{2k+1}^x(\mu) = \int_{S^1} \left( x - \frac{2 \pi}{2k+1} v_{2k+1}^{x,\mu} \right) d \mu = \sum_{i=0}^{2k} \int_{A_i} \bigg( x-\frac{2i \pi}{2k+1} \bigg) d \mu ,
\]
where we recall $x \colon S^1 - \{ [\theta_0] \} \to \mathbb{R}$ is a continuous function, defined everywhere except the set $\{ [\theta_0] \}$, which has measure $0$.
Furthermore, let $\cQ(S^1,V_{2k+1}) = \{ ([\theta],\mu) \in S^1 \times V_{2k+1} \mid [\theta] \in \text{supp}(\mu)\}$
and define, using any such coordinate system $(x, \tau)$, the function\footnote{We define $F_{2k+1}$ for any value of $r$ such that $V_{2k+1}(r)$ is nonempty.
However, the definition does not depend on $r$, so we can safely omit it from the notation.
We follow this convention for the homotopies $\widetilde{F}_{2k+1}$, $G_{2k+1}$, and $\widetilde{G}_{2k+1}$, defined later, as well.
In fact, we could even treat $r$ as fixed until the end of Section~\ref{section:cw_complex_and_homotopy_types}.} $F_{2k+1} \colon \cQ(S^1,V_{2k+1}) \times I \to S^1$ by
\[
F_{2k+1}([\theta],\mu,t) = \tau \bigg( (1-t) \, x([\theta]) + t \, \bigg(\frac{2 \pi }{2k+1}v_{2k+1}^{x,\mu}([\theta]) + m_{2k+1}^x(\mu) \bigg) \bigg).
\]

The intuition for these definitions is as follows.
We use the choice of $x$ to work with coordinates in $\mathbb{R}$ (we will soon show that the definition of $F_{2k+1}$ is independent of the choice of coordinate system).
The homotopy is constructed as a composition
\[
\begin{tikzcd}
{\cQ(S^1,V_{2k+1}) \times I} \arrow[r] & \mathbb{R} \arrow[r, "\tau"] & S^1.
\end{tikzcd}
\]
The integral $\int_{A_i} x \, d\mu$ acts as a weighted average (ignoring the total mass of $A_i$) of the images under $x$ of the support points in $A_i$.
Since the $\mu$-arcs $A_0, \dots, A_{2k}$ are in counterclockwise order around the circle, the integral $m_{2k+1}^x(\mu) = \sum_{i=0}^{2k} \int_{A_i} \left( x-\frac{2i \pi}{2k+1} \right) d \mu$ takes an average of where points in $\supp(\mu)$ ``expect" $A_0$ to be centered.
Then for each $[\theta] \in \supp(\mu)$, $\frac{2 \pi }{2k+1}v_{2k+1}^{x,\mu}([\theta]) + m_{2k+1}^x(\mu)$ is an angle of a point on a regular polygon associated to $\mu$, as $v_{2k+1}^{x,\mu} \in \{0, \dots,2k\}$.
The homotopy is then defined as a linear homotopy in $\mathbb{R}$, and we compose with the map $\tau$ to return to $S^1$.
This has the effect of moving all masses in a single $\mu$-arc to the same point and ending with masses located at $2k+1$ evenly spaced points; we can picture $F_{2k+1}$ as deforming the support of each measure in $V_{2k+1}$ into an average regular polygon.

\begin{lem}\label{lemma:F_is_a_support_homotopy}

For each $k \geq 0$, the function $F_{2k+1} \colon \cQ(S^1, V_{2k+1}(r)) \times I \to S^1$ is well-defined and is a support homotopy.
\end{lem}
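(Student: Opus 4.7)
The plan is to verify two things: independence of the formula from the choices it makes (the coordinate system $(x,\tau)$ and the continuous extension $v^{x,\mu}_{2k+1}$), and that the induced map lands in $V_{2k+1}$. Independence from the extension is immediate because $v^{x,\mu}_{2k+1}$ only appears evaluated at points of $\supp(\mu)\subseteq\bigcup_i A_i$, where its value is forced to equal the arc index. For coordinate-independence, I would compare two systems $(x,\tau)$ and $(x',\tau')$ using the transition formulas from Section~\ref{subsection: coordinates on the circle}: when $[\theta'_0]$ lies in the same gap as $[\theta_0]$, all shifts are by a common constant and cancel; when $[\theta'_0]$ lies in a different gap, the arc labeling is cyclically permuted by some $j$, $m^x_{2k+1}(\mu)$ shifts by a corresponding combination of $\theta'_0-\theta_0$ and $\tfrac{2j\pi}{2k+1}$, and any extra multiple of $2\pi$ is absorbed in passing from $\tau$ to $\tau'$.

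For continuity at a point $([\theta],\mu,t)$, I would fix $[\theta_0]$ strictly inside an excluded interval of $\mu$. By Lemma~\ref{lemma:close_arc_masses}(\ref{arc_mass_lemma_item_close_arc_masees}), for all $\nu$ in a Wasserstein neighborhood of $\mu$ the arcs of $\nu$ lie inside slight expansions of those of $\mu$, so the same coordinate system still applies and the arc index of a nearby support point is unchanged. Choosing a continuous function $g\colon S^1\to\mathbb{R}$ that agrees with $x-\tfrac{2i\pi}{2k+1}$ on each expanded $A_i$, we obtain $m^x_{2k+1}(\nu)=\int g\,d\nu$, which is continuous in $\nu$ by Lemma~\ref{lemma:weak_convergence_and_Wasserstein_convergence}. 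Together with continuity of $x$ and $\tau$ and local constancy of $v^{x,\nu}_{2k+1}$, this will give continuity of $F_{2k+1}$ at the chosen point.

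The heart of the argument is showing that $\mu_t:=\sum_l a_l\delta_{F_{2k+1}([\theta_l],\mu,t)}$ lies in $V_{2k+1}(r)$. Writing $y_l=x([\theta_l])$, $\gamma_i=\tfrac{2i\pi}{2k+1}+m^x_{2k+1}(\mu)$, and $\tilde{y}_l=(1-t)y_l+t\gamma_{i(l)}$, for $[\theta_l]\in A_i$ and $[\theta_{l'}]\in A_j$ with $i\leq j$ the new $x$-coordinate separation
\[
\tilde{y}_{l'}-\tilde{y}_l = (1-t)(y_{l'}-y_l) + t\cdot\tfrac{2\pi(j-i)}{2k+1}
\]
is a convex combination of the two endpoints. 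The crucial combinatorial input, via Lemma~\ref{lemma:red_blue_points_and_mu_arcs} and the requirement that blue support points and their antipodal reds alternate around the circle for $\mu\in V_{2k+1}$, is that at most $k+1$ blues can fit in any arc of $S^1$ of length $\leq r<\pi$ (any $n$ blues in such an arc force at least $n-1$ other blues in the disjoint opposite arc of the same length, so $2n-1\leq 2k+1$). This forces $j-i\leq k$ when $y_{l'}-y_l\leq r$ (short-way case) and symmetrically $j-i\geq k+1$ when $y_{l'}-y_l\geq 2\pi-r$ (long-way case). Combined with $r\geq\tfrac{2k\pi}{2k+1}$, both endpoints of the convex combination then lie in the allowed range, so $d_{S^1}\leq r$ in both cases.

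For the arc count of $\mu_t$, strict monotonicity of $\tilde{y}_l$ in $l$ preserves the cyclic order of a chosen blue support point from each arc, and since their pairwise distances stay $\leq r<\pi$ no two can become antipodal, so the alternation persists and $\mu_t$ has at least $2k+1$ arcs. Extending the above distance bounds from $\mu$'s support to arbitrary points of the $2k+1$ shrunken cluster arcs $B_i$ (each of width $\leq r$, separated by gaps $\geq 2(\pi-r)$), using geodesic convexity of $r$-balls in $S^1$, will show that no point of any $B_i$ is excluded by $\mu_t$; hence each $B_i$ lies inside a single $\mu_t$-arc and $\mu_t$ has exactly $2k+1$ arcs. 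The main obstacle is the counting argument forcing $j-i\leq k$ in the short-way case, where the arithmetic threshold $r\geq\tfrac{2k\pi}{2k+1}$ that defines $V_{2k+1}(r)$ becomes essential.
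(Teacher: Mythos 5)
Your well-definedness and continuity arguments match the paper's in approach, and your diameter bound is correct --- in fact it makes explicit a ``short-way versus long-way'' counting step that the paper leaves terse (the paper instead re-picks the coordinate system per pair of points so that one point lies in $A_0$ and the other in $A_j$ with $j\leq k$, eliminating the long-way case; both ultimately rest on the same ``at most $k+1$ blue points in a short arc'' fact you isolate).

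The gap is in the arc-count step, which is also where you diverge most sharply from the paper. The paper associates to each finite $\Theta$ of diameter $\leq r$ a continuous map $f_\Theta\colon S^1\to S^1$ with $\deg(f_\Theta)=\arcs_r(\Theta)$ and deduces $\arcs_r(\Theta_t)=2k+1$ for all $t$ at once from homotopy invariance of degree. You instead split into ``$\geq 2k+1$'' (persistence of alternation of tracked representatives, which is fine) and ``$\leq 2k+1$'' (no point of a cluster arc $B_i$ becomes excluded by $\mu_t$), justifying the latter by ``geodesic convexity of $r$-balls in $S^1$.'' That justification is wrong: a closed $r$-ball in $S^1$ is an arc of length $2r$, and an arc of length exceeding $\pi$ is not geodesically convex --- two points of the arc can have their short geodesic pass through the arc's complement. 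For $k\geq 1$ the relevant parameters satisfy $r\geq\frac{2\pi}{3}>\pi/2$, which is precisely the regime where convexity fails. The statement you need is still true, but it requires the excluded-region structure rather than convexity: a point $p$ lying between two support points inside a single $\mu$-arc is within $r$ of every point of $\supp(\mu)$ because the antipode of each support point is excluded by $\mu$ and hence lies outside that $\mu$-arc, so outside the short arc containing $p$; with this in hand one can rerun your counting and interpolation with such ``virtual'' support points. As written the step does not go through, whereas the paper's degree argument sidesteps the issue entirely.
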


\begin{proof}
We begin by showing $F_{2k+1}$ is well-defined, that is, that the choice of coordinate system does not affect the definition.
We compare the definition for two coordinate systems $x \colon S^1 - \{ [\theta_0] \} \to \mathbb{R}$ and $x' \colon S^1 - \{ [\theta'_0] \} \to \mathbb{R}$, where $\{ [\theta_0] \}$ and $\{ [\theta'_0] \}$ are points excluded by $\mu$.
As above, if $k=0$, let $A_0$ be the single $\mu$-arc, and otherwise, let $A_0, A_1, \dots, A_{2k}$ be the $\mu$-arcs, in counterclockwise order around the circle, and with $[\theta_0]$ between $A_{2k}$ and $A_0$.
If $k \geq 1$, then $[\theta'_0]$ is excluded by $\mu$, and it lies between two $\mu$-arcs; let $l$ be such that $[\theta'_0]$ lies between $A_{l-1}$ and $A_l$, or let $l=2k+1$ if $[\theta'_0]$ lies between $A_{2k}$ and $A_{0}$.
If $k = 0$, let $l = 1$.
By converting between coordinates $x$ and $x'$ as in Section~\ref{subsection: coordinates on the circle}, we find that there is an $s \in \mathbb{R}$ such that on arcs, 
\[
x'([\theta]) = \begin{cases}
x([\theta]) - s + 2\pi & \text{ if $[\theta] \in A_0 \cup \dots \cup A_{l-1}$}\\
x([\theta]) - s & \text{ if $[\theta] \in A_l \cup \dots \cup A_{2k}$}.
\end{cases}
\]
Furthermore, there exist periodic functions $\tau, \tau' \colon \mathbb{R} \to S^1$ such that $\tau \circ x = 1_{S^1}$ and $\tau' \circ x' = 1_{S^1}$, and these must satisfy $\tau(z) = \tau'(z-s)$.

We just need to convert each term in the definition of $F_{2k+1}$ between the two coordinate systems.
First, for $[\theta] \in A_0 \cup \dots \cup A_{2k}$, we have
\[
v_{2k+1}^{x',\mu}([\theta]) = \begin{cases}
v_{2k+1}^{x,\mu}([\theta]) + (2k+1) - l & \text{ if $[\theta] \in A_0 \cup \dots \cup A_{l-1}$} \\
v_{2k+1}^{x,\mu}([\theta]) - l & \text{ if $[\theta] \in A_l \cup \dots \cup A_{2k}$}. \\
\end{cases}
\]
Next, keeping in mind that $\supp(\mu) \subset A_0 \cup \dots \cup A_{2k}$, we compute
\begin{align*}
m_{2k+1}^{x'}(\mu) &= \int_{S^1} \left( x' - \frac{2 \pi}{2k+1} v_{2k+1}^{x',\mu} \right) d \mu \\
& = \int_{A_0 \cup \dots \cup A_{l-1}} \left( x - s + 2 \pi - \frac{2 \pi}{2k+1}\left( v_{2k+1}^{x,\mu} + (2k+1) - l \right)\right) d \mu \\
& \hspace{1cm} + \int_{A_l \cup \dots \cup A_{2k}} \left( x - s - \frac{2 \pi}{2k+1} \left( v_{2k+1}^{x,\mu} -l \right) \right) d \mu \\
& = \frac{2 \pi}{2k+1}l - s + \int_{S^1} \left( x - \frac{2 \pi}{2k+1} v_{2k+1}^{x,\mu} \right) d \mu\\
& = \frac{2 \pi}{2k+1}l - s + m_{2k+1}^{x}(\mu).
\end{align*}
From these, we can convert the following term in the definition of $F_{2k+1}$:
\[
\frac{2 \pi }{2k+1}v_{2k+1}^{x',\mu}([\theta]) + m_{2k+1}^{x'}(\mu) = \begin{cases}
\frac{2 \pi}{2k+1} v_{2k+1}^{x,\mu} + m_{2k+1}^{x}(\mu) - s + 2\pi & \text{ if $[\theta] \in A_0 \cup \dots \cup A_{l-1}$}\\
\frac{2 \pi}{2k+1} v_{2k+1}^{x,\mu} + m_{2k+1}^{x}(\mu) - s & \text{ if $[\theta] \in A_l \cup \dots \cup A_{2k}$}.
\end{cases}
\]
Along with the conversion for $x'([\theta])$, this gives
\begin{align*}
&\tau' \bigg( (1-t) \, x'([\theta]) + t \, \bigg(\frac{2 \pi }{2k+1}v_{2k+1}^{x',\mu}([\theta]) + m_{2k+1}^{x'}(\mu) \bigg) \bigg) \\
& \hspace{1cm} = \tau \bigg( (1-t) \, x([\theta]) + t \, \bigg(\frac{2 \pi }{2k+1}v_{2k+1}^{x,\mu}([\theta]) + m_{2k+1}^x(\mu) \bigg) \bigg),   
\end{align*}
where we have used the fact that $\tau(z) = \tau'(z-s)$ for all $z \in \mathbb{R}$ and $\tau'$ is $2 \pi$-periodic.
This shows the definition of $F_{2k+1}$ does not depend on the choice of coordinate system.

We next show $F_{2k+1}$ is continuous at an arbitrary point $([\theta'],\mu',t')$.
First, choose a $[\theta_0]$ such that $[\theta_0+\pi] \in \supp(\mu')$ (thus, $[\theta_0]$ is excluded by $\mu'$), and work with a coordinate system $x \colon S^1 - \{ [\theta_0] \} \to \mathbb{R}$ and $\tau$ such that $\tau \circ x = 1_{S^1 - \{ [\theta_0] \}}$.
By Lemma~\ref{lemma:close_measures_implies_close_masses}(\ref{lemma_item_support_points}), for any measure $\mu$ sufficiently close to $\mu'$, there is a point in $\supp(\mu)$ at distance less than $\pi-r$ from $[\theta_0+\pi]$ and thus excludes $[\theta_0]$ as well.
Therefore we may use the same coordinate system $(x, \tau)$ in some neighborhood of $\mu'$.
Since $x$ and $\tau$ are continuous and the argument for $\tau$ in the definition of $F_{2k+1}$ is defined by a linear homotopy in $\mathbb{R}$, it is sufficient to check that the function given by $([\theta],\mu) \mapsto \frac{2 \pi}{2k+1}v_{2k+1}^{x,\mu}([\theta]) + m_{2k+1}^x(\mu)$, defined on a neighborhood of $([\theta'],\mu')$, is continuous.
By Lemma~\ref{lemma:close_arc_masses}(\ref{arc_mass_lemma_item_close_arc_masees}), for any $\mu$ sufficiently close to $\mu'$, if $A_0, \dots, A_{2k}$ are defined by extending the $\mu$-arcs by $\frac{\pi-r}{2}$ on both sides, all points of $\supp(\mu')$ contained in a single $\mu'$-arc are contained in the same $A_i$, with distinct $\mu'$-arcs corresponding to distinct $A_i$.
This implies that if $[\theta''] \in \supp(\mu)$ and $d_{S^1}(\theta',\theta'') < \pi-r$, then $ v_{2k+1}^{x,\mu'}([\theta']) = v_{2k+1}^{x,\mu}([\theta''])$, and thus we now only need to show the function $\mu \mapsto m_{2k+1}^x(\mu)$, defined on some neighborhood of $\mu'$, is continuous.
With $\mu$ and $A_0, \dots, A_{2k}$ as above, we have
\begin{align*}
m_{2k+1}^x(\mu') - m_{2k+1}^x(\mu) & = \int_{S^1} \left( x - \frac{2 \pi}{2k+1} v_{2k+1}^{x,\mu'} \right) d \mu'  -  \int_{S^1} \left( x - \frac{2 \pi}{2k+1} v_{2k+1}^{x,\mu} \right) d \mu  \\
& = \int_{S^1} x \,d\mu' - \sum_{i=0}^{2k} \frac{2i \pi}{2k+1}\mu'(A_i) - \int_{S^1} x \,d\mu + \sum_{i=0}^{2k} \frac{2i \pi}{2k+1}\mu(A_i) \\
&= \int_{S^1} x \,d\mu' - \int_{S^1} x \,d\mu + \sum_{i=0}^{2k} \frac{2i \pi}{2k+1}\big(\mu(A_i) - \mu'(A_i)\big).
\end{align*}
For the integrals, $\int_{S^1} x \,d\mu$ approaches $\int_{S^1} x \,d\mu'$ as $\mu$ approaches $\mu'$: this follows from Lemma~\ref{lemma:weak_convergence_and_Wasserstein_convergence} after replacing $x$ by an appropriate bounded continuous function without changing the values of the integrals.
For the sum, by Lemma~\ref{lemma:close_arc_masses}(\ref{arc_mass_lemma_item_close_arc_masees}), each $| \mu(A_i) - \mu'(A_i) |$ can be made arbitrarily small by choosing a sufficiently small neighborhood of $\mu'$.
Therefore the function $\mu \mapsto m_{2k+1}^x(\mu)$ is continuous, so we conclude that $F_{2k+1}$ is continuous.

Finally, to see that $F_{2k+1}$ satisfies the definition of a support homotopy (Section~\ref{subsection: vrm and support homotopies}), we must check that for any $\mu = \sum_{i=1}^n a_i \delta_{[\theta_i]} \in V_{2k+1}$ with $a_i>0$ for all $i$, and for any $t\in I$, we have $\sum_{i=1}^n a_i \delta_{F_{2k+1}([\theta_i],\mu,t)} \in V_{2k+1}$.
This amounts to checking that $\sum_{i=1}^n a_i \delta_{F_{2k+1}([\theta_i],\mu,t)}$ has diameter at most $r$ and has exactly $2k+1$ arcs.
First, supposing $V_{2k+1}$ is nonempty, we must have $r \geq \frac{2k \pi}{2k+1}$ by Proposition~\ref{prop:facts_about_arcs_and_V_2k+1}. 
For the diameter bound, consider any two points in $\supp(\mu)$, without loss of generality writing them as $[\theta_{1}]$ and $[\theta_{2}]$.
Choose a coordinate system $(x,\tau)$ with a corresponding ordered set of $\mu$-arcs $A_0, \dots, A_{2k}$ so that, without loss of generality, $[\theta_{1}] \in A_0$ and $[\theta_{2}] \in A_j$ with $0\leq j \leq k$.
Then $| x([\theta_{1}]) - x([\theta_{2}]) | \leq r$.
For any $t\in I$, we apply the fact that $\tau$ is $1$-Lipschitz (Section~\ref{subsection: coordinates on the circle}), giving the following bound:

\begin{align*}
&\, d_{S^1}\big(F_{2k+1}([\theta_{1}],\mu,t), F_{2k+1}([\theta_{2}],\mu,t)\big) \\
= &\, d_{S^1}\left(\tau\left((1-t)x([\theta_{1}]) + t\, m_{2k+1}^x(\mu)\right), \tau \left((1-t)x([\theta_{2}]) + t\,\left( \frac{2j \pi}{2k+1} + m_{2k+1}^x(\mu) \right) \right)\right) \\
\leq &\, \biggr| (1-t)x([\theta_{1}]) + t\, m_{2k+1}^x(\mu) - \left( 
(1-t)x([\theta_{2}]) + t\,\left( \frac{2j \pi}{2k+1} + m_{2k+1}^x(\mu) \right) \right) \biggr| \\
\leq &\, (1-t) \biggr| x([\theta_{1}]) - x([\theta_{2}]) \biggr| + t \biggr| m_{2k+1}^x(\mu)  - \left( \frac{2j \pi}{2k+1} + m_{2k+1}^x(\mu) \right) \biggr| \\
\leq &\, (1-t) r + t \frac{2j \pi}{2k+1} \\
\leq &\, (1-t) r + t \frac{2k \pi}{2k+1} \\
\leq &\, (1-t) r + t\, r \\
= &\, r. \\
\end{align*}
Therefore, $\sum_{i=1}^n a_i \delta_{F_{2k+1}([\theta_i],\mu,t)}$ has diameter at most $r$.

To see that each $\sum_{i=1}^n a_i \delta_{F_{2k+1}([\theta_i],\mu,t)}$ has $2k+1$ arcs, we first associate to any nonempty finite subset $\Theta \subset S^1$ of diameter at most $r$ a continuous map $f_\Theta \colon S^1 \to S^1$.
Color the points of $\Theta$ blue and the points opposite them red.
Let $f_{\Theta}$ send each blue point to $[0]$ and each red point to $[\pi]$.
On any arc between consecutive colored points that are the same color, let $f_{\Theta}$ remain constant at the value of the endpoints.
On an arc between consecutive colored points with opposite colored endpoints, let the angle of $f_{\Theta}([\theta])$ increase at a constant rate as $\theta$ increases, such that it increases by $\pi$ across the length of the arc.
Since each blue point is at a distance of at least $\pi-r$ from each red point, $f_{\Theta}$ is $\frac{\pi}{\pi-r}$-Lipschitz.
We can see that $\arcs_r(\Theta)$ is equal to the degree of $f_\Theta$.
Letting $\Theta_t = \{ F_{2k+1}([\theta_i],\mu,t) \mid 1 \leq i \leq n \}$, we get a function $f_{\Theta_t} \colon S^1 \to S^1$ for each $t$.
The continuity of $F_{2k+1}$ can be used to check that we get a continuous map $S^1 \times I \to S^1$ defined by $([\theta],t) \mapsto f_{\Theta_t}([\theta])$.
Thus, any $f_{\Theta_t}$ is homotopic to $f_{\Theta_0}$, so for each $t$, 
\[\arcs_r(\Theta_t) = \deg(f_{\Theta_t}) = \deg(f_{\Theta_0}) = \arcs_r(\Theta_0) = \arcs_r(\supp(\mu)) = 2k+1.
\]
This shows each $\sum_{i=1}^n a_i \delta_{F_{2k+1}([\theta_i],\mu,t)}$ has $2k+1$ arcs and completes the proof that $F_{2k+1}$ is a support homotopy.
\end{proof}

Applying Lemma~\ref{lemma_support_homotopy} to the support homotopy $F_{2k+1}$, we obtain a homotopy $\widetilde{F}_{2k+1} \colon V_{2k+1} \times I \to V_{2k+1}$, defined for $\mu = \sum_{i=1}^n a_i \delta_{[\theta_i]}$ with $a_i > 0$ for each $i$ by $\widetilde{F}_{2k+1}(\mu,t) = \sum_{i=1}^n a_i \delta_{F_{2k+1}([\theta_i],\mu,t)}$.
For each $\mu \in V_{2k+1}$, $\widetilde{F}_{2k+1}(\mu,1)$ is a measure supported on $2k+1$ evenly spaced points on the circle, and all masses in $\mu$ in a single $\mu$-arc are moved to a single one of these evenly spaced points.
Explicitly, following the notation above, for each $\mu \in V_{2k+1}$, we have
\[
\widetilde{F}_{2k+1}(\mu,0) = \mu
\]
\begin{equation}\label{equation:expression_for_F(_,1)}
\widetilde{F}_{2k+1}(\mu,1) = \sum_{i=0}^{2k} \mu(A_i) \delta_{\tau\left(\frac{2i \pi}{2k+1} + m_{2k+1}^{x}(\mu)\right)}.    
\end{equation}
Thus, $\widetilde{F}_{2k+1}(\_\,,1)$ sends $V_{2k+1}$ into $P_{2k+1}$.

\begin{figure}[h]
    \centering
    \includegraphics[width = .8\textwidth]{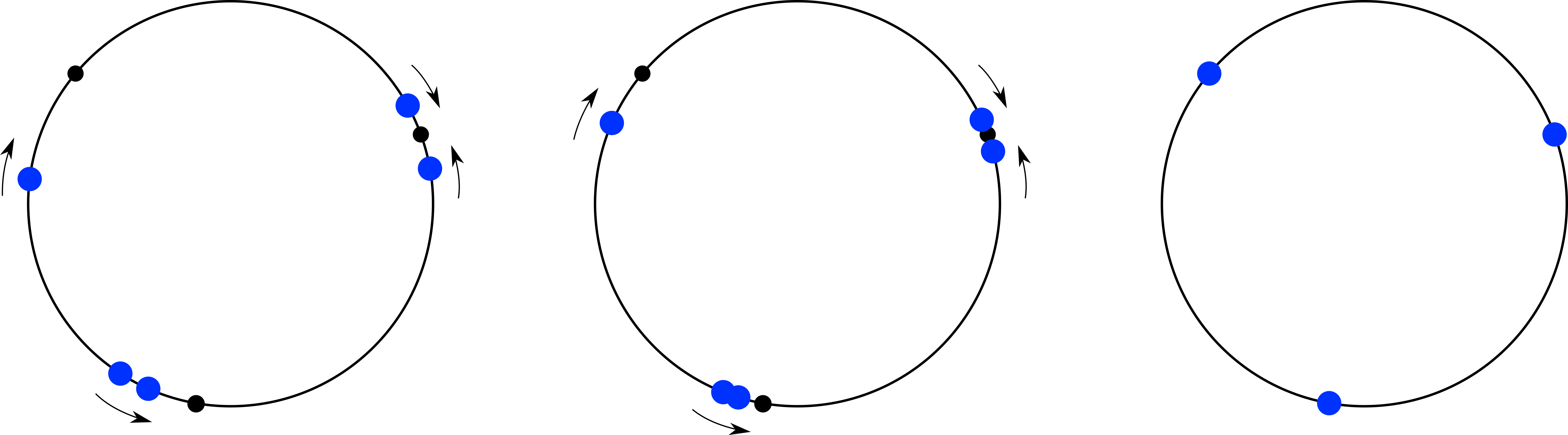}
    \caption{$\widetilde{F}_{2k+1}$ can be visualized for an individual measure by sliding the support points along the circle.
    This example shows $\widetilde{F}_{2k+1}(\mu,0) = \mu$, $\widetilde{F}_{2k+1}(\mu,\frac{1}{2})$, and $\widetilde{F}_{2k+1}(\mu,1)$ for a specific measure $\mu$.
    The blue points are the support points, which move until they reach the smaller black points.}
    \label{fig:homotopy}
\end{figure}

For each $k$, it can be checked that the homotopy $\widetilde{F}_{2k+1}$ is a deformation retraction, which is enough to show that $V_{2k+1} \simeq P_{2k+1}$.
However, this is not enough for our purposes, as we would like to collapse all the $V_{2k+1}$ while preserving the homotopy type of the entire space $\vrmcircleq$.
We will describe in Section~\ref{section:sequence_of_quotients} how this can be accomplished using Proposition~\ref{prop:quotient_homotopy_equivalences}, by defining equivalence relations that relate measures in $V_{2k+1}$ if they are sent to the same measure by $\widetilde{F}_{2k+1}(\_\,, 1)$.
To prepare for this use of Proposition~\ref{prop:quotient_homotopy_equivalences}, we prove the following lemma, which implies that each $\widetilde{F}_{2k+1}(\_\,, t)$ sends each fiber of $\widetilde{F}_{2k+1}(\_\,, 1)$ into the same fiber.

\begin{lem}\label{lemma:partial_homotopy_keeps_same_destination}

For any $r \in [0,\pi)$ such that $V_{2k+1}(r)$ is nonempty, any $k \geq 0$, any $t \in I$, and any $\mu \in V_{2k+1}(r)$, we have 
\[
\widetilde{F}_{2k+1}(\widetilde{F}_{2k+1}(\mu,t),1) = \widetilde{F}_{2k+1}(\mu,1).
\]
\end{lem}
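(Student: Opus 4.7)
The plan is to exploit formula \eqref{equation:expression_for_F(_,1)}: the regular polygonal measure $\widetilde{F}_{2k+1}(\nu,1)$ is determined entirely by the arc masses $\nu(A_j)$ and the centroid $m_{2k+1}^{x}(\nu)$. Setting $\mu_t = \widetilde{F}_{2k+1}(\mu,t)$, it therefore suffices to establish a natural bijection $A_j \leftrightarrow A_j^t$ between the arcs of $\mu$ and the arcs of $\mu_t$ — expressed in a single coordinate system $(x,\tau)$ — for which $\mu_t(A_j^t) = \mu(A_j)$ and $m_{2k+1}^{x}(\mu_t) = m_{2k+1}^{x}(\mu)$; formula \eqref{equation:expression_for_F(_,1)} then immediately yields the claim.

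The first task is to fix a coordinate system $(x,\tau)$ exactly as in the definition of $F_{2k+1}$, with $[\theta_0]$ placed strictly in the middle of the excluded region of $\mu$ between $A_{2k}$ and $A_0$, and to verify that this coordinate system remains valid for every $\mu_t$. Lemma~\ref{lemma:F_is_a_support_homotopy} shows that $\mu_t$ has exactly $2k+1$ arcs throughout the homotopy and that $\diam(\supp(\mu_t)) \leq r$, and the explicit formula for $F_{2k+1}$ shows that each support point moves at most a controlled distance in $x$-coordinates. A continuity argument then lets one conclude that the excluded region of $\mu$ containing $[\theta_0]$ deforms continuously into an excluded region of $\mu_t$ still containing $[\theta_0]$, and that the arcs of $\mu_t$, listed counterclockwise starting after $[\theta_0]$, can be labeled $A_0^t,\ldots,A_{2k}^t$ consistently with the assignment $A_j \cap \supp(\mu) \ni [\theta] \mapsto F_{2k+1}([\theta],\mu,t) \in A_j^t$. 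The mass preservation $\mu_t(A_j^t) = \mu(A_j)$ is then immediate from the construction of $\widetilde{F}_{2k+1}$.

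With this setup, for $[\theta] \in A_j \cap \supp(\mu)$ one has the identity
\[
x\bigl(F_{2k+1}([\theta],\mu,t)\bigr) = (1-t)\,x([\theta]) + t\left(\tfrac{2j\pi}{2k+1} + m_{2k+1}^{x}(\mu)\right),
\]
since the argument of $\tau$ lies in the range of $x$. Substituting into $m_{2k+1}^{x}(\mu_t) = \sum_{j=0}^{2k} \int_{A_j^t}\bigl(x - \tfrac{2j\pi}{2k+1}\bigr)\,d\mu_t$, distributing the sum, and using $\sum_j \mu(A_j) = 1$, a direct algebraic manipulation collapses the expression to $(1-t)\,m_{2k+1}^{x}(\mu) + t\,m_{2k+1}^{x}(\mu) = m_{2k+1}^{x}(\mu)$. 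Applying formula \eqref{equation:expression_for_F(_,1)} to $\mu_t$ with these two invariances then gives $\widetilde{F}_{2k+1}(\mu_t,1) = \widetilde{F}_{2k+1}(\mu,1)$.

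The main obstacle is the arc-correspondence step: although Lemma~\ref{lemma:F_is_a_support_homotopy} already gives that the number of arcs is conserved, tracking the arcs of $\mu_t$ carefully enough that a single coordinate system and a single labeling suffice for both $\mu$ and $\mu_t$ requires the continuity argument outlined above. Once that bookkeeping is in place, the preservation of $m_{2k+1}^{x}$ is pure algebra, and the lemma follows from the explicit formula.
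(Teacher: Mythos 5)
Your overall strategy coincides with the paper's: by formula~\eqref{equation:expression_for_F(_,1)} the value $\widetilde{F}_{2k+1}(\nu,1)$ is determined by the arc masses and $m_{2k+1}^x(\nu)$, so the claim reduces to showing both are unchanged by $\widetilde{F}_{2k+1}(\cdot,t)$, and the latter comes down to $\int_{S^1} x\,d\mu = \int_{S^1} x\,d\,\widetilde{F}_{2k+1}(\mu,t)$ — a direct computation, provided a single coordinate system $(x,\tau)$ can be fixed in which every intermediate support point has its $\tau$-argument inside $(y_0,y_0+2\pi)$, so that $x\circ\tau$ is the identity there. The algebraic computation you outline is exactly the paper's.

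The gap is in the choice of $[\theta_0]$ and in the assertion ``since the argument of $\tau$ lies in the range of $x$.'' Placing $[\theta_0]$ at the midpoint of the excluded region between $A_{2k}$ and $A_0$ and appealing to a ``continuity argument'' does not establish this. What must actually be verified is that no trajectory $t\mapsto (1-t)x([\theta])+t\bigl(\tfrac{2j\pi}{2k+1}+m_{2k+1}^x(\mu)\bigr)$ leaves $(y_0,y_0+2\pi)$; since each trajectory is linear and starts inside, this is equivalent to every target $\tfrac{2j\pi}{2k+1}+m_{2k+1}^x(\mu)$ being inside, i.e.\ to $m_{2k+1}^x(\mu)\in(0,\tfrac{2\pi}{2k+1})$. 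The midpoint choice does not force this: if both $A_0$ and $A_{2k}$ move toward $[\theta_0]$ (one counterclockwise, one clockwise), the target for $A_{2k}$ may spill past $2\pi$, mass crosses $[\theta_0]$, and $x\circ\tau$ fails to be the identity on those trajectories. Continuity alone cannot rule this out; it is a quantitative statement about $m_{2k+1}^x(\mu)$ that requires proof.

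The paper handles this by choosing $[\theta_0]$ according to the dynamics rather than the static geometry. It first observes that by definition of $m_{2k+1}^x$ the net displacement is zero, so not all support points move clockwise and not all counterclockwise. Then either some reduced $\mu$-arc contains its own collapse target, in which case $[\theta_0]$ is the antipode of that target (a point the homotopy never touches); or no arc contains its target, so each arc moves monotonically, and one can find two adjacent arcs moving \emph{apart} and place $[\theta_0]$ between them. With that choice, $A_0$ moving counterclockwise gives $m_{2k+1}^x(\mu) > x([\theta]) > 0$ for every $[\theta]\in A_0\cap\supp(\mu)$, and $A_{2k}$ moving clockwise gives $\tfrac{4k\pi}{2k+1}+m_{2k+1}^x(\mu) < x([\theta]) < 2\pi$ for every $[\theta]\in A_{2k}\cap\supp(\mu)$, so all targets (and hence all linear trajectories) stay inside $(0,2\pi)$. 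Your proposal needs this case analysis or an equivalent argument; without it the central identity $x\bigl(F_{2k+1}([\theta],\mu,t)\bigr)=(1-t)x([\theta])+t\bigl(\tfrac{2j\pi}{2k+1}+m_{2k+1}^x(\mu)\bigr)$, and therefore the conclusion, is unjustified.
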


\begin{proof}
Let $\mu = \sum_{i=1}^n a_i \delta_{[\theta_i]}$ with $a_i > 0$ for each $i$.
We will show the claimed equation holds at a fixed $t_0 \in I$.
We first show we can find a coordinate system that can be used for each computation of $\widetilde{F}_{2k+1}$.
Temporarily, we define a \textit{reduced $\mu$-arc} to be the smallest closed arc containing all the points of $\supp(\mu)$ contained in a given $\mu$ arc; that is, its endpoints are the outermost support points of the $\mu$-arc.
For any $\mu \in V_{2k+1}$, we know that $\widetilde{F}_{2k+1}$ collapses the masses of each reduced $\mu$-arc to a single point.
If some reduced $\mu$-arc contains the point it is collapsed to, let $[\theta_0]$ be the point opposite it (note that this is the only possible case when $k=0$).
Otherwise, suppose each reduced $\mu$-arc is collapsed to a point outside it and hence, within each reduced $\mu$-arc, all support points are moved in the same direction by $F_{2k+1}$.
Since $m_{2k+1}^x$ is defined by a weighted average, we can show not all support points are moved clockwise and not all are moved counterclockwise.
This can be seen using any valid coordinate system $(x, \tau)$: by the definition of $F_{2k+1}$, a point $[\theta_i] \in \supp(\mu)$ is moved in $x$ values from $x([\theta_i])$ to $\frac{2 \pi }{2k+1}v_{2k+1}^{x,\mu}([\theta_i]) + m_{2k+1}^x(\mu)$.
Since $\sum_{i=1}^n a_i \left( x([\theta_i]) - \left(\frac{2 \pi }{2k+1}v_{2k+1}^{x,\mu}([\theta_i]) + m_{2k+1}^x(\mu) \right)\right) = 0$ by the definition of $m_{2k+1}$, not all support points move in the same direction.
Thus, beginning with an arc that is moved clockwise and reading counterclockwise around the circle until we reach the first arc that is moved counterclockwise, we can find two reduced $\mu$-arcs $A$ and $A'$ such that $A'$ is the $\mu$-arc immediately counterclockwise from $A$, $A$ is moved clockwise, and $A'$ is moved counterclockwise.
Because these are contained in distinct $\mu$ arcs, there must be a point excluded by $\mu$ between the two, immediately counterclockwise of $A$ and clockwise of $A'$; let $[\theta_0]$ be any such point.
In either case, we can check that no mass is moved through $[\theta_0]$ by $\widetilde{F}_{2k+1}(\mu, \_\,)$, so the coordinate system $(x,\tau)$ with $x \colon S^1 -\{[\theta_0]\} \to \mathbb{R}$ is a valid choice of coordinate system for both $\mu$ and $\widetilde{F}_{2k+1}(\mu,t)$ for the computation of $\widetilde{F}_{2k+1}$.
In the notation of Section~\ref{subsection: coordinates on the circle}, we can choose $y_0$ to be $0$, so that the image of $x$ is $(0,2\pi)$ .
By the choice of $[\theta_0]$, the expression $(1-t) \, x([\theta]) + t \, \left(\frac{2 \pi }{2k+1}v_{2k+1}^{x,\mu}([\theta]) + m_{2k+1}^x(\mu) \right)$ used in the definition of $\widetilde{F}_{2k+1}$ produces values in $(0,2\pi)$ for all $t$ and all $[\theta] \in \supp(\mu)$.
This means we will be able to use the fact that $x \circ \tau$ restricted to the interval $(0, 2\pi)$ is the identity.

Equation~(\ref{equation:expression_for_F(_,1)}) above shows 
\[
\widetilde{F}_{2k+1}(\mu,1) = \sum_{i=0}^{2k} \mu(A_i) \delta_{\tau\left(\frac{2i \pi}{2k+1} + m_{2k+1}^{x}(\mu)\right)}
\]
and
\[
\widetilde{F}_{2k+1}(\widetilde{F}_{2k+1}(\mu,t_0),1) = \sum_{i=0}^{2k} \widetilde{F}_{2k+1}(\mu,t_0)(A'_i) \delta_{\tau\left(\frac{2i \pi}{2k+1} + m_{2k+1}^{x}(\widetilde{F}_{2k+1}(\mu,t_0))\right)},
\]
where $A_0, \dots, A_{2k}$ are the arcs of $\mu$ and $A'_0, \dots, A'_{2k}$ are the arcs of $\widetilde{F}_{2k+1}(\mu,t_0),1)$, both ordered counterclockwise starting at $[\theta_0]$.
Since the arcs of $\mu$ and $\widetilde{F}_{2k+1}(\mu,t_0)$ remain in the same order and have the same amounts of mass, $\widetilde{F}_{2k+1}(\mu,t_0)(A'_i) = \mu(A_i)$ for each $i$.
Thus, it is sufficient to show that $m_{2k+1}^x(\mu) = m_{2k+1}^x(\widetilde{F}_{2k+1}(\mu,t_0))$.
By definition,
\[
m_{2k+1}^x(\mu) = \int_{S^1} \left(x - \frac{2\pi}{2k+1}v_{2k+1}^{x,\mu}\right) \,d \mu,
\]
and
\[
m_{2k+1}^x(\widetilde{F}_{2k+1}(\mu,t_0)) = \int_{S^1} \left(x - \frac{2\pi}{2k+1}v_{2k+1}^{x,\widetilde{F}_{2k+1}(\mu,t_0)}\right) \,d \widetilde{F}_{2k+1}(\mu,t_0).
\]
Again, the arcs of $\mu$ and $\widetilde{F}_{2k+1}(\mu,t_0)$ remain in the same order and have the same amounts of mass, so the terms $v_{2k+1}^{x,\mu}$ and $v_{2k+1}^{x,\widetilde{F}_{2k+1}(\mu,t_0)}$ integrate to the same value.
We thus need to show that
\[
\int_{S^1} x \,d \mu
=
\int_{S^1} x \,d \widetilde{F}_{2k+1}(\mu,t_0).
\]
By definition, if $\mu = \sum_{i=1}^n a_i \delta_{[\theta_i]}$ with $a_i > 0$ for each $i$, then $\widetilde{F}_{2k+1}(\mu,t_0) = \sum_{i=1}^n a_i \delta_{F_{2k+1}([\theta_i],\mu,t_0)}$.
We compute, applying the fact that $x \circ \tau$ restricted to the interval $(0, 2\pi)$ is the identity:
\begin{align*}
\int_{S^1} x \,d \widetilde{F}_{2k+1}(\mu,t_0) &= \sum_{i=1}^n a_i x(F_{2k+1}([\theta_i]),\mu,t_0)\\
&= \sum_{i=1}^n a_i x \circ \tau \left( (1-t_0) \, x([\theta_i]) + t_0 \, \left(\frac{2 \pi }{2k+1}v_{2k+1}^{x,\mu}([\theta_i]) + m_{2k+1}^x(\mu) \right) \right)\\
&= \sum_{i=1}^n a_i \left( (1-t_0) \, x([\theta_i]) + t_0 \, \left(\frac{2 \pi }{2k+1}v_{2k+1}^{x,\mu}([\theta_i]) + m_{2k+1}^x(\mu) \right) \right)\\
&= (1-t_0)\int_{S^1} x \,d\mu + t_0 \left(\int_{S^1} \frac{2 \pi }{2k+1}v_{2k+1}^{x,\mu} \,d\mu \,+\,m_{2k+1}^x(\mu) \right)\\
&=\int_{S^1} x \,d\mu,
\end{align*}
where the last step uses the definition of $m_{2k+1}^x(\mu)$.
\end{proof}

\vspace{.15cm}
\section{A Sequence of Quotients}\label{section:sequence_of_quotients}

Having defined homotopies $\widetilde{F}_{2k+1}\colon V_{2k+1} \times I \to V_{2k+1}$ that collapse the $V_{2k+1}$ to measures supported on regularly spaced points, we now show how to collapse all $V_{2k+1}$ at once in a way that preserves the homotopy type.
There is not necessarily a natural way to extend a given $\widetilde{F}_{2k+1}$ continuously to all of $\vrmcircleq$.
However, it turns out that proceeding one $k$ at a time, we can identify points with equal images under $\widetilde{F}_{2k+1}$ while preserving the homotopy type, which produces a much simpler space.
We introduce a sequence of quotient maps as follows.

\[
\begin{tikzcd}
\vrmcircleq \arrow[dd, "1_{\vrmcircleq}"'] \arrow[rdd, "q_1"] \arrow[rrdd, "q_3"] \arrow[rrrrdd, "q_{2K+1}"] &                                                  &                             &                                          &                         \\
                                                                                             &                                                  &                             &                                          &                         \\
\vrmcircleq \arrow[r, "\widetilde{q}_1"']                                                            & \frac{\vrmcircleq}{\sim_1} \arrow[r, "\widetilde{q}_3"'] & \frac{\vrmcircleq}{\sim_3} \arrow[r, "\widetilde{q}_5"'] & \dots \arrow[r, "\widetilde{q}_{2K+1}"'] & \frac{\vrmcircleq}{\sim_{2K+1}}
\end{tikzcd}
\]
\vspace{.2cm}

For each $k \geq 0$, let the equivalence relation $\sim_{2k+1}$ on $\vrmcircleq$ be defined by $\mu_1 \sim_{2k+1} \mu_2$ if and only if $\mu_1 = \mu_2$ or for some $l \leq k$, $\mu_1$ and $\mu_2$ are in $V_{2l+1}$ and $\widetilde{F}_{2l+1}(\mu_1,1) = \widetilde{F}_{2l+1}(\mu_2,1)$.
Let $q_1, \dots, q_{2K+1}$ be the associated quotient maps.
For convenience, we will also let $\sim_{-1}$ be equality and let $q_{-1}$ be the identity map on $\vrmcircleq$.
Because each equivalence relation respects the previous ones, we also get quotient maps $\widetilde{q}_1, \dots, \widetilde{q}_{2K+1}$.
We also note that for all $k$ and $l$, $W_{2l+1}$ is a closed, $q_{2k+1}$-saturated\footnote{Given a function $f \colon X \to Y$, a subset $U \subseteq X$ is called \textit{$f$-saturated}, or simply \textit{saturated}, if $U = f^{-1}(f(U))$.
A continuous, surjective function between topological spaces is a quotient map if and only if the image of each saturated open (closed) set is open (closed)~\cite{Munkres}.} subspace of $\vrmcircleq$, which implies the restriction $q_{2k+1}\vert_{W_{2l+1}} \colon W_{2l+1} \to q_{2k+1}(W_{2l+1})$ is a quotient map (Theorem 22.1 of~\cite{Munkres}).
Our aim is to show that each quotient $\widetilde{q}_{2k+1}$ is a homotopy equivalence.

We extend the composition $q_{2k-1} \circ \widetilde{F}_{2k+1} \colon V_{2k+1} \times I \to q_{2k-1}(V_{2k+1})$ to the following map so that we will be able to apply Proposition~\ref{prop:quotient_homotopy_equivalences}.
For each $k\geq 0$, define $G_{2k+1} \colon W_{2k+1} \times I \to q_{2k-1}(W_{2k+1})$ by
\[
G_{2k+1}(\mu,t) = \begin{cases}
q_{2k-1} \circ \widetilde{F}_{2k+1} (\mu,t) & \text{ if $\mu \in V_{2k+1}$} \\
q_{2k-1}(\mu) & \text{ if $\mu \in W_{2k-1}$}.
\end{cases}
\]
Thus, we have $\mu_1 \sim_{2k+1} \mu_2$ if and only if $G_{2k+1}(\mu_1,1) = G_{2k+1}(\mu_2,1)$.

Checking that each $G_{2k+1}$ is continuous will be tedious, so we place the proof of continuity in Appendix~\ref{appendix:continuity_of_G}.
The intuition for the continuity of $G_{2k+1}$ is as follows.
We can reduce to checking continuity at each point in $\partial V_{2k+1} \times I$.
Since $\widetilde{F}_{2k+1}(\_\,, 1)$ performs an averaging operation on measures of $V_{2k+1}$ and $q_{2k-1}$ identifies measures with the same averages under the various $\widetilde{F}_{2l+1}(\_\,, 1)$ with $l < k$, we need to check that these averages are compatible with each other (where for $\widetilde{F}_{2k+1}$ we actually need to consider a limit as we approach $\partial V_{2k+1}$).
This compatibility is analogous to the fact that to take a weighted average in $\mathbb{R}$, we can perform the sum in any order, and in particular, averaging certain subsets of points first does not change the final average.
Since the averaging operation performed by each $\widetilde{F}_{2l+1}$ depends on taking weighted averages of coordinates in $\mathbb{R}$, it is reasonable to expect that the various averages are in fact compatible.

We proceed with our goal of showing each $\widetilde{q}_{2k+1}$ is a homotopy equivalence.
Letting $r \in [0, \pi)$ and $0 \leq k \leq K(r)$, we will check that we can apply Proposition~\ref{prop:quotient_homotopy_equivalences} to the pair $\left( \frac{\vrmcircleq}{\sim_{2k-1}},\, q_{2k-1}(W_{2k+1}) \right)$ and the homotopy $\widetilde{G}_{2k+1}$ constructed below.
Proposition~\ref{prop:HEP_for_(vrm,W)} states that each pair $(\vrmcircleq, W_{2k+1})$ has the HEP.
By Proposition~\ref{prop:HEP_for_quotients}, since each $\mu \in \vrmcircleq - W_{2k+1}$ is only equivalent to itself under the equivalence relation $\sim_{2k-1}$, we find that each pair $\left( \frac{\vrmcircleq}{\sim_{2k-1}},\, q_{2k-1}(W_{2k+1}) \right)$ has the HEP.
Each $G_{2k+1}(\_\,, t) \colon W_{2k+1} \to q_{2k-1}(W_{2k+1})$ is constant on the equivalence classes of $\sim_{2k-1}$, so applying Lemma~\ref{lemma:homotopies_on_quotients} and the universal property of quotients, we get a homotopy $\widetilde{G}_{2k+1} \colon q_{2k-1}(W_{2k+1}) \times I \to q_{2k-1}(W_{2k+1})$ defined by $\widetilde{G}_{2k+1}(q_{2k-1}(\mu),t) = G_{2k+1}(\mu,t)$.
Specifically,
\[
\widetilde{G}_{2k+1}(q_{2k-1}(\mu),t) = \begin{cases}
q_{2k-1} \circ \widetilde{F}_{2k+1} (\mu,t) & \text{ if $\mu \in V_{2k+1}$} \\
q_{2k-1}(\mu) & \text{ if $\mu \in W_{2k-1}$}.
\end{cases}
\]
Thus, $\widetilde{G}_{2k+1}(q_{2k-1}(V_{2k+1}) \times I) \subseteq q_{2k-1}(V_{2k+1})$ and $\widetilde{G}_{2k+1}(q_{2k-1}(W_{2k-1}) \times I) \subseteq q_{2k-1}(W_{2k-1})$, where we can note that $q_{2k-1}(V_{2k+1})$ and $q_{2k-1}(W_{2k-1})$ are disjoint.
Furthermore, equivalence classes of $V_{2k+1}$ with respect to $q_{2k-1}$ are singletons, so for $\mu_1, \mu_2 \in V_{2k+1}$, we have $\widetilde{G}_{2k+1}(q_{2k-1}(\mu_1),1) = \widetilde{G}_{2k+1}(q_{2k-1}(\mu_2),1)$ if and only if $\widetilde{F}_{2k+1} (\mu_1,1) = \widetilde{F}_{2k+1} (\mu_2,1)$.
Therefore, the quotient map $\widetilde{q}_{2k+1} \colon \frac{\vrmcircleq}{\sim_{2k-1}} \to \frac{\vrmcircleq}{\sim_{2k+1}}$ described above identifies $q_{2k-1}(\mu_1)$ and $q_{2k-1}(\mu_2)$ if and only if $\widetilde{G}_{2k+1}(q_{2k-1}(\mu_1),1) = \widetilde{G}_{2k+1}(q_{2k-1}(\mu_2),1)$.
Finally, by Lemma~\ref{lemma:partial_homotopy_keeps_same_destination}, for any $t \in I$, we have $\widetilde{G}_{2k+1}(\widetilde{G}_{2k+1}(q_{2k-1}(\mu),t),1) = \widetilde{G}_{2k+1}(q_{2k-1}(\mu),1)$, so each $\widetilde{G}_{2k+1}(\_\,,t)$ sends each fiber of $\widetilde{G}_{2k+1}(\_\,,1)$ back into the same fiber.
Therefore, all conditions of Proposition~\ref{prop:quotient_homotopy_equivalences} apply to the pair of spaces $\left( \frac{\vrmcircleq}{\sim_{2k-1}},\, q_{2k-1}(W_{2k+1}) \right)$ and the homotopy $\widetilde{G}_{2k+1}$, so we conclude that $\widetilde{q}_{2k+1}$ is a homotopy equivalence.
By forming the composition $\widetilde{q}_{2K+1} \circ \dots \circ \widetilde{q}_{3} \circ \widetilde{q}_{1}$ of homotopy equivalences, we have thus proved the following theorem.
We now simplify notation, writing the final equivalence relation $\sim_{2K+1}$ above as $\sim$ and writing $q \colon \vrmcircleq \to \vrmcircleq / \sim$ for the quotient map.

\begin{thm}\label{theorem:vrm_homotopy_equivalent_to_quotient}
Define an equivalence relation $\sim$ on $\vrmleq{S^1}{r}$ by setting $\mu_1 \sim \mu_2$ if and only if for some $k \geq 0$, $\mu_1$ and $\mu_2$ are in $V_{2k+1}(r)$ and $\widetilde{F}_{2k+1}(\mu_1,1) = \widetilde{F}_{2k+1}(\mu_2,1)$.
Then $\vrmleq{S^1}{r} \simeq \vrmleq{S^1}{r} / \sim$.

\end{thm}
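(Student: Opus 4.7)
The plan is to factor the quotient map $q \colon \vrmleq{S^1}{r} \to \vrmleq{S^1}{r} / \sim$ through a finite sequence of intermediate quotient maps $\widetilde{q}_{1}, \widetilde{q}_{3}, \ldots, \widetilde{q}_{2K+1}$, where $\widetilde{q}_{2k+1}$ collapses only the equivalence classes arising from $V_{2k+1}$, and to show each $\widetilde{q}_{2k+1}$ is a homotopy equivalence. Composing these homotopy equivalences then yields the theorem. Concretely, I would define $\sim_{2k+1}$ as in the discussion preceding the theorem and observe that the nested refinements $\sim_{-1} \subseteq \sim_{1} \subseteq \cdots \subseteq \sim_{2K+1} \,=\, \sim$ give rise to quotient maps $\widetilde{q}_{2k+1} \colon \vrmcircleq / \sim_{2k-1} \to \vrmcircleq / \sim_{2k+1}$ factoring $q$.

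To show $\widetilde{q}_{2k+1}$ is a homotopy equivalence, I plan to invoke Proposition~\ref{prop:quotient_homotopy_equivalences} applied to the pair $\left( \vrmcircleq / \sim_{2k-1},\, q_{2k-1}(W_{2k+1}) \right)$ together with a descended homotopy $\widetilde{G}_{2k+1}$. Three conditions must be verified. First, that pair must have the HEP: this follows by combining Proposition~\ref{prop:HEP_for_(vrm,W)} (which gives the HEP for $(\vrmcircleq, W_{2k+1})$) with Proposition~\ref{prop:HEP_for_quotients}, since measures outside $W_{2k+1}$ form singleton classes under $\sim_{2k-1}$. Second, the homotopy must start at the identity, which is immediate from $\widetilde{F}_{2k+1}(\_\,,0) = \id$. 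Third, each $\widetilde{G}_{2k+1}(\_\,,t)$ must send fibers of $\widetilde{G}_{2k+1}(\_\,,1)$ into themselves, which follows directly from Lemma~\ref{lemma:partial_homotopy_keeps_same_destination}. To construct $\widetilde{G}_{2k+1}$, I would first define $G_{2k+1} \colon W_{2k+1} \times I \to q_{2k-1}(W_{2k+1})$ piecewise as $q_{2k-1} \circ \widetilde{F}_{2k+1}$ on $V_{2k+1} \times I$ and as $q_{2k-1}$ on $W_{2k-1} \times I$, then check it is constant on fibers of $q_{2k-1}\vert_{W_{2k+1}}$, and finally descend via Lemma~\ref{lemma:homotopies_on_quotients} and the universal property of quotient maps. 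Unwinding definitions of $\sim_{2k-1}$ and $\sim_{2k+1}$ will show that the resulting quotient identification matches $\widetilde{q}_{2k+1}$.

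I expect the main obstacle to be verifying continuity of $G_{2k+1}$, in particular along $\partial V_{2k+1} \times I$, where the two pieces of the piecewise definition must agree and the number of $\mu$-arcs jumps discontinuously. As a sequence in $V_{2k+1}$ approaches a measure in $\partial V_{2k+1} \subseteq W_{2k-1}$, neighboring $\mu$-arcs merge and the centroid-based averaging performed by $\widetilde{F}_{2k+1}$ must converge in a manner compatible with the coarser averaging performed by the lower-level $\widetilde{F}_{2l+1}$ collapsed through $q_{2k-1}$. This compatibility is analogous to the associativity of weighted averages in $\mathbb{R}$, so it is morally clear, but establishing it rigorously will require careful tracking of coordinate systems, arc masses (via Lemma~\ref{lemma:close_arc_masses}), and the exact form of $m^x_{2k+1}$ under perturbation. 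I would be prepared to defer this computation to an appendix rather than letting it dominate the main narrative.
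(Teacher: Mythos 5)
Your proposal follows the paper's argument step for step: factoring through the intermediate quotients $\widetilde{q}_{2k+1}$, verifying the HEP for $(\vrmleq{S^1}{r}/\sim_{2k-1},\, q_{2k-1}(W_{2k+1}))$ via Propositions~\ref{prop:HEP_for_(vrm,W)} and~\ref{prop:HEP_for_quotients}, descending $G_{2k+1}$ through the quotient, applying Lemma~\ref{lemma:partial_homotopy_keeps_same_destination} for the fiber condition, and invoking Proposition~\ref{prop:quotient_homotopy_equivalences}. The paper likewise defers the continuity of $G_{2k+1}$ to an appendix, and your diagnosis of the difficulty there (compatibility of the nested averaging near $\partial V_{2k+1}$) is exactly what the appendix argument addresses.
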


The quotient $\vrmleq{S^1}{r} / \sim$ is a much simpler space than $\vrmleq{S^1}{r}$.
Each measure is deformed to a regular polygonal measure by some $\widetilde{F}_{2k+1}$ and is equivalent to this measure under the equivalence relation.
This means every class in $\vrmleq{S^1}{r} / \sim$ can be represented by a regular polygonal measure.

\vspace{.15cm}
\section{The CW Complex and Homotopy Types}\label{section:cw_complex_and_homotopy_types}

We now show that each quotient $\vrmleq{S^1}{r} / \sim$ described in Theorem~\ref{theorem:vrm_homotopy_equivalent_to_quotient} has the topology of a CW complex, which will allow us to determine the homotopy types.
We will use the description of CW complexes from Proposition A.2 of~\cite{Hatcher}, which first requires that $\vrmleq{S^1}{r} / \sim$ be Hausdorff; this is not generally true of a quotient of a metric space, so the proof will depend on the construction of this particular quotient.

\begin{lem}\label{lemma:Hausdorff}

For each $0 \leq k \leq K(r)$, $\vrmleq{S^1}{r} / \sim_{2k+1}$ is Hausdorff.
\end{lem}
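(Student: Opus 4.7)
The plan is to proceed by induction on $k \geq -1$, using the convention that $\sim_{-1}$ is equality (as introduced earlier in the paper). The base case is then immediate: $\vrmleq{S^1}{r}/\sim_{-1} = \vrmleq{S^1}{r}$, which is Hausdorff as a metric space under the Wasserstein distance.

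For the inductive step, assume $Y := \vrmleq{S^1}{r}/\sim_{2k-1}$ is Hausdorff and show the same for $\vrmleq{S^1}{r}/\sim_{2k+1}$. Set $A = q_{2k-1}(W_{2k+1})$ and $B = q_{2k-1}(W_{2k-1}\cup P_{2k+1}) \subseteq A$. First I would check that $A$ is closed in $Y$: by Lemma~\ref{lemma:V_1_to_V_2k+1_closed}, $W_{2k+1}$ is closed in $\vrmleq{S^1}{r}$, and it is $\sim_{2k-1}$-saturated because $\sim_{2k-1}$ only identifies points within $W_{2k-1} \subseteq W_{2k+1}$. Next, the map $r := \widetilde{G}_{2k+1}(\,\cdot\,,1) : A \to A$ is a continuous retraction onto $B$: continuity is established in the discussion preceding Theorem~\ref{theorem:vrm_homotopy_equivalent_to_quotient}, the image lies in $B$ by construction of $\widetilde{G}_{2k+1}$, and idempotence $r\circ r = r$ follows from Lemma~\ref{lemma:partial_homotopy_keeps_same_destination} together with the fact that $r$ acts as the identity on both $q_{2k-1}(W_{2k-1})$ (trivially) and $q_{2k-1}(P_{2k+1})$ (since $\widetilde{F}_{2k+1}$ fixes regular polygonal measures).

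The key observation is that $\vrmleq{S^1}{r}/\sim_{2k+1}$ is naturally the pushout of the diagram $Y \hookleftarrow A \xrightarrow{r} B$. Indeed, tracing through the definitions of $\sim_{2k+1}$ and $\sim_{2k-1}$, one sees that the equivalence induced on $Y$ identifies two distinct points $a_1, a_2 \in A$ precisely when $r(a_1) = r(a_2)$, and is trivial outside $A$ (using that $\sim_{2k-1}$ is the identity on each $V_{2l+1}$ with $l\geq k$). Propositions~\ref{prop:HEP_for_(vrm,W)} and \ref{prop:HEP_for_quotients} combine to show that the pair $(Y, A)$ has the HEP, and since $A$ is closed in $Y$, the inclusion $A\hookrightarrow Y$ is a closed cofibration. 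I would then invoke the standard fact that a pushout of a Hausdorff space along a closed cofibration into another Hausdorff space is Hausdorff: since $Y$ is Hausdorff by induction and $B$ is Hausdorff as a subspace of $Y$, this yields that $\vrmleq{S^1}{r}/\sim_{2k+1}$ is Hausdorff.

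The main obstacle is justifying the pushout-preserves-Hausdorff statement cleanly. A direct proof exploits the neighborhood deformation retract structure implied by the closed cofibration: one obtains an open neighborhood $N$ of $A$ in $Y$ with a retraction $\pi : N \to A$. Given two distinct points in the pushout, one lifts them to canonical representatives in $B \cup (Y \setminus A)$; neighborhoods of preimages in $A$ can then be thickened via $\pi$ to open sets of $Y$ that meet $A$ only in the prescribed preimage, and combined with Hausdorff separations in $B$ and in $Y$ these produce the disjoint saturated open sets needed to separate the two points in the pushout.
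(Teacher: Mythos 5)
Your proposal is correct and takes a genuinely different route from the paper. Both proofs run an induction starting from $\sim_{-1}$ (equality), both use that $W_{2k+1}$ is a closed $\sim_{2k-1}$-saturated set and that the map $\widetilde{G}_{2k+1}(\,\cdot\,,1)$ is continuous, and both ultimately rely on the HEP structure established by Propositions~\ref{prop:HEP_for_(vrm,W)} and~\ref{prop:HEP_for_quotients}. But where the paper carries out a direct, three-case Hausdorff separation (both points outside $W_{2k+1}$; one inside, one outside; both inside) with explicit metric thickenings of open sets, you instead recognize the passage from $\vrmleq{S^1}{r}/\sim_{2k-1}$ to $\vrmleq{S^1}{r}/\sim_{2k+1}$ as a pushout $Y \hookleftarrow A \xrightarrow{r} B$ along the closed cofibration $A \hookrightarrow Y$, where $r = \widetilde{G}_{2k+1}(\,\cdot\,,1)$ retracts $A$ onto $B = q_{2k-1}(W_{2k-1}\cup P_{2k+1})$, and then invoke the general fact that such a pushout of Hausdorff spaces is Hausdorff. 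Your identification of the pushout structure is accurate (the extra identifications in $\sim_{2k+1}$ beyond $\sim_{2k-1}$ are precisely the fibers of $r$ on $A$, using that $q_{2k-1}$ is injective on $V_{2k+1}$ and that $\widetilde{F}_{2k+1}$ fixes $P_{2k+1}$), and you correctly flag the Hausdorff-pushout lemma as the non-trivial step and sketch its proof via the Str\o{}m/NDR structure $(u,h)$: in the case of two points in $q(B)$ one thickens disjoint opens of $B$ via $\pi=h_1$ restricted to $u^{-1}([0,1))$; one point in $q(B)$ versus one in $q(Y\setminus A)$ is separated using sublevel and superlevel sets of $u$; and two points in $q(Y\setminus A)$ use Hausdorffness of $Y$ together with closedness of $A$. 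The paper's third case, with the $\varepsilon_1(\nu_1)$ and $\varepsilon_2(\nu_2)$ thickening of saturated open sets, is essentially a bare-hands metric proof of exactly this NDR thickening step. The trade-off: your approach is more modular and conceptually cleaner (and reusable), while the paper's avoids the need to prove or locate the general pushout lemma and stays self-contained in the metric setting. Both are sound.
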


\begin{proof}
We will use induction on $k$.
Recall we defined $\sim_{-1}$ as equality, so that $\vrmcircleq / \sim_{-1} \cong \vrmcircleq$ is Hausdorff.
We use this as the base case.
For the inductive step, let $k \geq 0$ and suppose that $\vrmcircleq / \sim_{2k-1}$ is Hausdorff.
Supposing that $q_{2k+1}(\mu_1) \neq q_{2k+1}(\mu_2)$, we must find disjoint open neighborhoods of these points in $\vrmcircleq / \sim_{2k+1}$.
This is equivalent to finding $q_{2k+1}$-saturated, disjoint, open neighborhoods of $\mu_1$ and $\mu_2$ in $\vrmcircleq$.

We split into three cases.
If $\mu_1$ and $\mu_2$ are in $\vrmcircleq - W_{2k+1}$, then let $U_1 = B_{\vrmcircleq}(\mu_1,\varepsilon)$ and $U_2 = B_{\vrmcircleq}(\mu_2,\varepsilon)$, with $\varepsilon > 0$ small enough so that $U_1$ and $U_2$ are disjoint.
Then since $W_{2k+1}$ is closed in $\vrmcircleq$, $U_1 - W_{2k+1}$ and $U_2 - W_{2k+1}$ are open, disjoint neighborhoods of $\mu_1$ and $\mu_2$.  
They are $q_{2k+1}$-saturated since each element in $\vrmcircleq - W_{2k+1}$ is the only element in its equivalence class.

Next, suppose $\mu_1 \in W_{2k+1}$ and $\mu_2 \in \vrmcircleq - W_{2k+1}$.
Let ${U'_1 = \bigcup_{\mu \in W_{2k+1}} B_{\vrmcircleq}(\mu, \varepsilon)}$ and let $U'_2 = B_{\vrmcircleq}(\mu_2, \varepsilon)$, where $\varepsilon>0$ is chosen by Lemma~\ref{lemma:close_measures_implies_close_masses}(\ref{lemma_item_number_of_arcs}) so that all measures of $B_{\vrmcircleq}(\mu_2, 2 \varepsilon)$ have at least as many arcs as $\mu_2$.
Suppose for a contradiction that there is a $\nu \in U'_1 \cap U'_2$.
Then for some $\mu \in W_{2k+1}$, we have $\nu \in B_{\vrmcircleq}(\mu, \varepsilon)$, so $d_{W}(\mu,\mu_2) \leq d_{W}(\mu,\nu) + d_{W}(\nu, \mu_2) < 2\varepsilon$.
But this contradicts the choice of $\varepsilon$, since $\mu$ has at most $2k+1$ arcs and $\mu_2$ has greater than $2k+1$ arcs.
Therefore $U'_1$ and $U'_2$ are disjoint open neighborhoods of $\mu_1$ and $\mu_2$.
Furthermore, $W_{2k+1} \subseteq U'_1$ and $U'_2 \cap W_{2k+1} = \varnothing$ because all measures in $U'_2$ have at least as many arcs as $\mu_2$.
Therefore $U'_1$ and $U'_2$ are $q_{2k+1}$-saturated, again because each element in $\vrmcircleq - W_{2k+1}$ is the only element in its equivalence class.

Finally, we consider the case where $\mu_1$ and $\mu_2$ are both in $W_{2k+1}$.
Recall we have shown that $G_{2k+1} \colon W_{2k+1} \times I \to q_{2k-1}(W_{2k+1})$ is continuous and that $G_{2k+1}(\nu_1,1) = G_{2k+1}(\nu_2,1)$ if and only if $q_{2k+1}(\nu_1) = q_{2k+1}(\nu_2)$, for $\nu_1,\nu_2 \in W_{2k+1}$.
Since we have supposed $q_{2k+1}(\mu_1) \neq q_{2k+1}(\mu_2)$, we must have $G_{2k+1}(\mu_1,1) \neq G_{2k+1}(\mu_2,1)$.
By the inductive hypothesis, we can find disjoint open neighborhoods of $G_{2k+1}(\mu_1,1)$ and $G_{2k+1}(\mu_2,1)$ in $q_{2k-1}(W_{2k+1}) \subseteq \vrmcircleq / \sim_{2k-1}$; let $U''_1$ and $U''_2$ be their preimages under $G_{2k+1}(\_\,,1)$.
Then $U''_1$ and $U''_2$ are $q_{2k+1}$-saturated, disjoint, open subsets of $W_{2k+1}$ that contain $\mu_1$ and $\mu_2$ respectively.
We must extend these to open subsets of $\vrmcircleq$, so we will thicken around every point, as follows.
For each $\nu_1 \in U''_1$ and each $\nu_2 \in U''_2$, define
\begin{align*}
\varepsilon_1(\nu_1) &= \sup \{ \varepsilon \mid B_{W_{2k+1}}(\nu_1, \varepsilon) \subseteq U''_1 \} \\   
\varepsilon_2(\nu_2) &= \sup \{ \varepsilon \mid B_{W_{2k+1}}(\nu_2, \varepsilon) \subseteq U''_2 \}.
\end{align*}
These are always positive since $U''_1$ and $U''_2$ are open in $W_{2k+1}$, so we can obtain the following open sets of $\vrmcircleq$:
\begin{align*}
U'''_1 &= \bigcup_{\nu_1 \in U''_1} B_{\vrmcircleq}\left(\nu_1, \tfrac{1}{2} \varepsilon_1(\nu_1)\right) \\
U'''_2 &= \bigcup_{\nu_2 \in U''_2} B_{\vrmcircleq}\left(\nu_2, \tfrac{1}{2} \varepsilon_2(\nu_2)\right).
\end{align*}
If $\nu \in U'''_1 \cap W_{2k+1}$, then 
$\nu \in U''_1$ by choice of $\varepsilon_1(\nu_1)$, so we have $U'''_1 \cap W_{2k+1} = U''_1$.
Therefore $U'''_1$ is $q_{2k+1}$-saturated, since $U''_1$ is $q_{2k+1}$-saturated and each point not in $W_{2k+1}$ is the only element in its equivalence class.
Similarly, we see $U'''_2$ is $q_{2k+1}$-saturated.
To show $U'''_1$ and $U'''_2$ are disjoint, suppose $\nu \in U'''_1 \cap U'''_2$, so that $\nu \in B_{\vrmcircleq}(\nu_1, \frac{1}{2} \varepsilon_1(\nu_1)) \cap B_{\vrmcircleq}(\nu_2, \frac{1}{2} \varepsilon_2(\nu_2))$ for some $\nu_1 \in U''_1$ and $\nu_2 \in U''_2$.
Without loss of generality, suppose $\varepsilon_1(\nu_1) \geq \varepsilon_2(\nu_2)$, so that $d_{W}(\nu_1,\nu_2) < \frac{1}{2}(\varepsilon_1(\nu_1)+\varepsilon_2(\nu_2)) \leq \varepsilon_1(\nu_1)$.
Then by definition of $\varepsilon_1(\nu_1)$, we have $\nu_2 \in U''_1 \cap U''_2$, contradicting the fact that $U''_1$ and $U''_2$ are disjoint.
Therefore $U'''_1$ and $U'''_2$ are $q_{2k+1}$-saturated, disjoint, open neighborhoods of $\mu_1$ and $\mu_2$ in $\vrm{S^1}{r}$, as required.
\end{proof}

For the following lemma, recall that we have defined $R_{2k} \subseteq \vrmcircleq$ to be the set of measures with support equal to the regular $(2k+1)$-gon $\{ [0], [\frac{1 \cdot 2 \pi}{2k+1}], \dots, [\frac{2k \cdot 2\pi}{2k+1} ] \}$.

\begin{lem}\label{lemma:quotients_of_individual_polygons}
\sloppy
For each $k \geq 1$, and any $r \in [0,\pi)$ such that $R_{2k} \subseteq \vrmleq{S^1}{r}$, restricting $q$ gives a surjective map $q|_{\partial R_{2k}} \colon \partial R_{2k} \to q(W_{2k-1}(r))$.
If we further restrict the domain to $\partial R_{2k} \cap V_{2k-1}(r)$, then $q|_{\partial R_{2k} \cap V_{2k-1}(r)}$ is a bijection onto $q(V_{2k-1}(r))$.
\fussy
\end{lem}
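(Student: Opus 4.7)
The plan is to prove both statements by explicit construction, exploiting the formula~\eqref{equation:expression_for_F(_,1)}: two measures in $V_{2l+1}(r)$ are equivalent under $\sim$ iff they have the same arc masses and the same value of $m_{2l+1}^x$.

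For Part 1, given $[\mu] \in q(W_{2k-1}(r))$, I would first replace $\mu$ by its equivalent regular polygonal representative $\widetilde{F}_{2l+1}(\mu,1) = \sum_{j=0}^{2l} c_j \delta_{q_j} \in P_{2l+1}(r)$ for the appropriate $l \leq k-1$, with $q_j = [\beta + \tfrac{2j\pi}{2l+1}]$. I then construct $\mu' \in \partial R_{2k} \cap V_{2l+1}(r)$ satisfying $\widetilde{F}_{2l+1}(\mu',1) = \mu$ as follows. I choose a support $S \subsetneq R$ whose $(S,r)$-arcs number exactly $2l+1$: for $l = k-1$ this forces $S = R\setminus\{p_{i_0}\}$ or $S = R\setminus\{p_{i_0},p_{j_0}\}$ with $j_0 - i_0 \equiv k$ or $k+1 \pmod{2k+1}$; for $l < k-1$ there is more freedom. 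I then assign the arc masses of $\mu'$ to be $c_0,\dots,c_{2l}$ in the correct cyclic order, which determines $\mu'(p)$ for each $p$ lying in a single-point arc, and leaves the mass split inside any multi-point arc as a free parameter. Finally, I adjust this free parameter so that $m_{2l+1}^x(\mu') = \beta + y_0 - \theta_0$; plugging into~\eqref{equation:expression_for_F(_,1)} gives $\widetilde{F}_{2l+1}(\mu',1) = \mu$. Because $2l+1 \leq 2k-1 < 2k+1$, at least one $R$-vertex is always unused, so $\mu' \in \partial R_{2k}$, and Lemma~\ref{lemma:red_blue_points_and_mu_arcs} confirms the arc count.

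For Part 2, the map $\widetilde{F}_{2k-1}(\_,1)$ induces a canonical bijection $q(V_{2k-1}(r)) \leftrightarrow P_{2k-1}(r)$, so it suffices to show that $\widetilde{F}_{2k-1}(\_,1)\colon \partial R_{2k} \cap V_{2k-1}(r) \to P_{2k-1}(r)$ is a bijection. A direct arc-count (the number of arcs equals $|\supp(\mu')|$ minus the number of closed complement-arcs containing no support point) restricts $\supp(\mu')$ to the "skip-one-point" or "antipodal skip-two-point" configurations described above. Surjectivity is Part 1 with $l = k-1$. For injectivity, I parameterize each "skip-one-point" configuration by $(i_0,\sigma,\alpha)$, where $i_0$ selects the missing $R$-vertex, $\sigma$ is the cyclic matching of $(c_0,\dots,c_{2k-2})$ to arc masses, and $\alpha \in (0,1)$ is the mass-split parameter inside the unique two-point arc; by construction $m_{2k-1}^x(\mu')$ depends linearly on $\alpha$, so the induced $\beta$-values trace an interval. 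The skip-two-point configurations appear as the boundaries $\alpha \in \{0,1\}$, shared between adjacent intervals. It then remains to verify these $(2k+1)\cdot(2k-1)$ intervals tile $P_{2k-1}(r)$ exactly.

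The main obstacle is verifying this tiling property for general $k$. My approach is to compute $m_{2k-1}^x$ explicitly on each open "skip-one-point" cell and use the cyclic symmetry of $R$ to show the $(2k+1)\cdot(2k-1)$ images are pairwise disjoint open tiles whose union is $P_{2k-1}(r)$, with the skip-two-point configurations correctly lying on the common boundaries (this generalizes the explicit calculation one does for $k=1,2$). A topological alternative is to invoke invariance of domain: $\widetilde{F}_{2k-1}(\_,1)$ is continuous and locally injective on each $(2k-1)$-dimensional open cell, both $\partial R_{2k} \cap V_{2k-1}(r)$ and $P_{2k-1}(r)$ are compact $(2k-1)$-dimensional, and the behavior on the codimension-one skip-two-point faces matches on both sides, so the global map is a bijection.
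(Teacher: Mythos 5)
Your proposed strategy differs genuinely from the paper's, but in its present form it has a gap that you partially acknowledge without resolving, and that gap is load-bearing for both parts, not just Part 2.

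The paper's proof constructs explicit paths $\gamma_l$ in $\partial R_{2k}$ that shift mass around the regular $(2k+1)$-gon one position at a time while keeping arc masses constant, concatenates them into a loop $\gamma$, and then observes that $m_{2l+1}^x(\gamma(t))$ is strictly increasing and that the associated regular polygon $\widetilde{F}_{2l+1}(\gamma(t),1)$ traverses the circle exactly once as $t$ runs over $[0,1]$. Surjectivity then follows from the intermediate value theorem, and injectivity (on $\partial R_{2k}\cap V_{2k-1}$) follows from the ``winds exactly once'' count, with a careful separate analysis of the case where $\mu'$ has nontrivial cyclic symmetry so that multiple $t$-values hit the target and one must check they yield the same $\gamma'(t)$. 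Your plan instead enumerates the ``skip-one-point'' and ``skip-two-point'' supports $S\subsetneq R$, parameterizes them by $(i_0,\sigma,\alpha)$, and attempts to close the argument by a tiling claim. That is a different decomposition of the same underlying combinatorics, and it could in principle work, but you have not closed it.

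Here is the concrete problem. In Part~1 you write: ``Finally, I adjust this free parameter so that $m_{2l+1}^x(\mu')=\beta+y_0-\theta_0$.'' For a \emph{fixed} choice of missing vertex $i_0$ and a \emph{fixed} cyclic assignment $\sigma$ of the arc masses, the free parameter $\alpha$ (the mass split in the unique multi-point arc) moves $m_{2l+1}^x(\mu')$ only through a bounded interval whose length is governed by the two arc masses being traded, not through the whole circle. There is no reason the target $\beta$ lies in that particular interval. To make the target reachable you must let $(i_0,\sigma)$ vary as well and show the resulting intervals cover a full period --- which is exactly the ``tiling'' property you flag as the main obstacle in Part~2. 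So the tiling verification is not an add-on needed only for injectivity; your Part~1 surjectivity argument already depends on it, and as written Part~1 is not complete. The paper's loop $\gamma$ is precisely what makes this covering automatic (via monotonicity of $m$ along $\gamma$ and periodicity), so it is not a detail you can defer.

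Two further issues. First, your ``topological alternative'' via invariance of domain does not work as stated: $\partial R_{2k}\cap V_{2k-1}(r)$ and $P_{2k-1}(r)$ are \emph{not} compact (they are the complements of lower strata inside the compact sets $\partial R_{2k}$ and $\overline{P_{2k-1}}$), and local injectivity on open cells plus dimension-matching does not by itself give global bijectivity without a degree or properness argument, which brings you back to the loop/winding count anyway. Second, for Part~1 with $l<k-1$ you write that ``there is more freedom,'' but then the multi-point arcs are several, the free parameter is multi-dimensional, and the tiling claim becomes substantially harder; you give no indication of how to handle it. The paper avoids this by making a single one-parameter loop that works uniformly for all $l<k$. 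If you want to keep your direct parameterization, you need to (i) prove the interval-covering/tiling statement, handling the rotational-symmetry degeneracy where distinct $(i_0,\sigma,\alpha)$ can yield the same measure, and (ii) extend it beyond $l=k-1$; alternatively, adopt the loop construction, which gives both parts at once.
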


\begin{proof}
To simplify notation, we will write $(z_0, z_1, \dots, z_{2k})$ for the measure $\sum_{i = 0}^{2k} z_i \delta_{[\frac{i \cdot 2\pi}{2k+1}]}$ and will refer to the masses being in positions $0$ through $2k$.
When we describe points between consecutive positions, we will mean points on the shorter arc, of length $\frac{2\pi}{2k+1}$, immediately between them (as opposed to the longer arc of length $\frac{2k \cdot 2\pi}{2k+1}$ on the other side of the circle).
Any equivalence class in $q(W_{2k-1})$ can be represented by a regular polygonal measure with at most $2k-1$ vertices, so we begin with an arbitrary set of masses $a_0, \dots, a_{2k-2}$ with $a_i \geq 0$ for each $i$ and $\sum_{i=0}^{2k-2} a_i = 1$.
We will write indices of the masses $a_i$ modulo $2k-1$ and the positions modulo $2k+1$.
To determine the arcs of a measure in $\partial R_{2k}$, we can use the fact that a position $i$ has nonzero mass in a measure $\mu$ if and only if the open arc between positions $i+k$ and $i+k+1$ contains a point excluded by $\mu$.

We begin with the measure $(a_0,0,a_1, \dots, a_{k-1}, 0, a_k, \dots, a_{2k-2})$ and gradually pass masses between the support points.
Define $\gamma_0 \colon [0,a_0] \to \partial R_{2k}$ by 
\[
\gamma_0(t) = (a_0 - t, t, a_1, \dots, a_{k-1}, 0, a_k, \dots, a_{2k-2}).
\]
Since there is zero mass at position $k+1$ throughout, this is in fact a map into $\partial R_{2k}$, and furthermore, there is no excluded point between positions $0$ and $1$.  
This shows positions $0$ and $1$ belong to the same arc of $\gamma_0(t)$ for each $t$.
Thus, if $k'$ is such that $\gamma_0(0) \in V_{2k'+1}$, then $\gamma_0(t) \in V_{2k'+1}$ for all $t \in [0, a_0]$, and if we consider $(2k'+1)$-arc mass forms, the arc masses of $\gamma_0(t)$ are the same for all values of $t$.
The measure $\gamma_0(a_0)$ has zero mass at positions $0$ and $k+1$, so positions $k$ and $k+1$ belong to the same $\gamma_0(a_0)$-arc.
We will next move mass between these positions, then repeat this process.  
In general, we obtain paths $\gamma_{l} \colon [0,a_{l(k-1)}] \to \partial R_{2k}$, defining $\gamma_{l}(t)$ by letting the masses as positions $lk$ through $lk+2k$ be, in order,
\[
a_{l(k-1)}-t, t, a_{l(k-1)+1}, \dots, a_{l(k-1)+k-1}, 0, a_{l(k-1)+k}, \dots, a_{l(k-1)+2k-2}.
\]
Note that the domain of $\gamma_l$ is the singleton $\{0\}$ if $a_{l(k-1)} = 0$.
Again, a mass of zero at position $lk+k+1$ implies that positions $lk$ and $lk+1$ are in the same arc, so the arc masses remain constant in each path.

\begin{figure}[h]
    \centering
    \fontsize{9pt}{11pt}
    \def\svgwidth{.9\textwidth}
    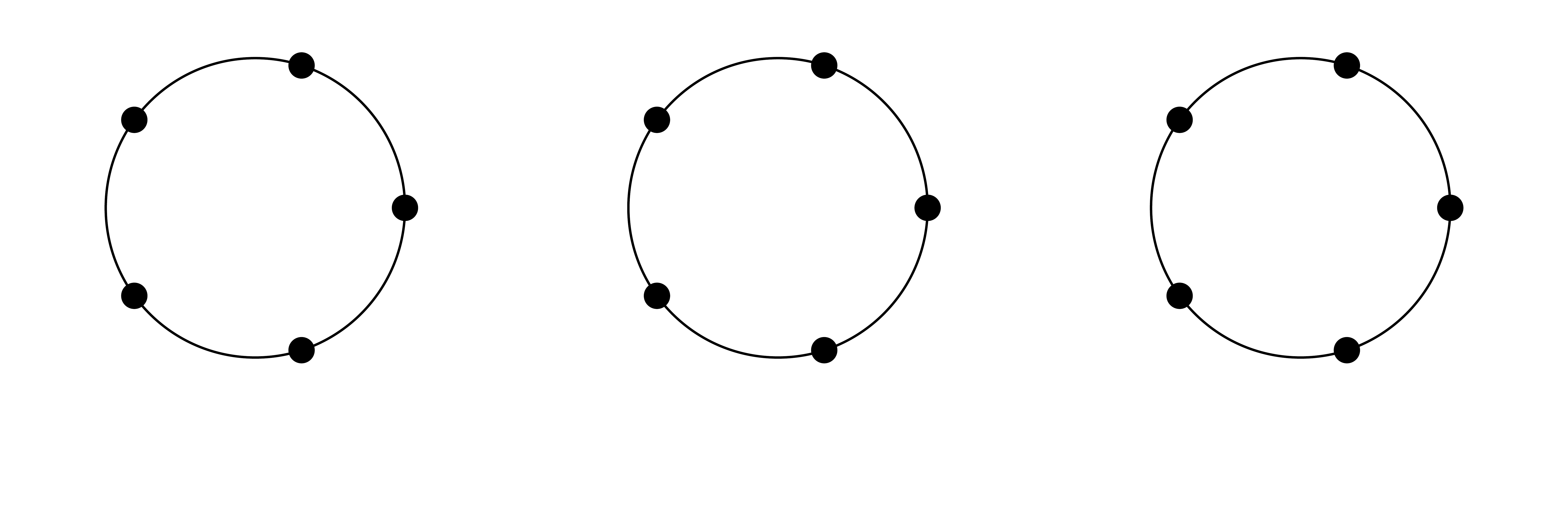
    \caption{Paths in the proof of Lemma~\ref{lemma:quotients_of_individual_polygons} with $k=2$.
    We have $\gamma_0(a_0) = \gamma_1(0)$ and $\gamma_1(a_1) = \gamma_2(0)$, so the paths may be concatenated.
    Compare $\gamma_0(0)$ to $\gamma_2(a_2)$: the masses are shifted by one position.}
    \label{fig:lemma_14}
\end{figure}

It can be checked that $\gamma_{l}(a_{l(k-1)}) = \gamma_{l+1}(0)$ for each $l$, so we may concatenate these paths; write the resulting path as $\gamma_l \cdot \gamma_{l+1}$.
Then the path $\gamma_0 \cdot \gamma_1 \cdot \cdot \cdot \gamma_{2k-2}$ preserves arc masses throughout and has starting point $(a_0,0,a_1, \dots, a_{k-1}, 0, a_k, \dots, a_{2k-2})$ and ending point $(a_{2k-2}, a_0,0,a_1, \dots, a_{k-1}, 0, a_k, \dots, a_{2k-3})$.
That is, the overall effect has been to move each mass over one position.
Repeating $2k+1$ times, we define $ \gamma = \gamma_0 \cdot \gamma_1 \cdot \cdot \cdot \gamma_{(2k-1)(2k+1)-1}$, which rotates each mass once around the circle; thus, $\gamma$ is a loop.
By scaling, we can assume the domain of $\gamma$ is $[0,1]$.
More generally, for any $l$ and $l'$, we have $\gamma_{l} = \gamma_{l'}$ if ${l \equiv l' \mod (2k-1)(2k+1)}$.

To see that $q|_{\partial R_{2k}}$ is surjective onto $q(W_{2k-1})$, take any equivalence class in $q(W_{2k-1})$ and choose a representative $\mu \in W_{2k-1}$ with support a regular $(2k'+1)$-gon, with $k' < k$.
We can choose $a_0, \dots a_{2k-2}$ and set $\nu = (a_0,0,a_1, \dots, a_{k-1}, 0, a_k, \dots, a_{2k-2})$ such that in $(2k'+1)$-arc mass form, the ordered arc masses of $\nu$ match those of $\mu$: for instance, if we choose $\nu$ to have nonzero masses at exactly positions $0, k, \dots, 2k'k$, then these positions lie in separate $\nu$-arcs, and we can choose the masses at these positions to match the arc masses of $\mu$.
Define each $\gamma_l$ and $\gamma$ as above with this choice of $a_0, \dots a_{2k-2}$.
Then $\gamma(0) = \nu$, and for any $t \in [0,1]$, since $\gamma(t)$ and $\gamma(0)$ have the same ordered arc masses, $\widetilde{F}_{2k'+1}(\gamma(t),1)$ is a measure with support a regular $(2k'+1)$-gon and ordered arc masses matching those of $\mu$.
Working in any coordinate system $x$ valid near some $\gamma(t)$, we can see that $m_{2k'+1}^x(\gamma(t))$ is strictly increasing in $t$, since $\gamma$ moves mass counterclockwise.
This implies that locally, the masses of $\widetilde{F}_{2k'+1}(\gamma(t),1)$ move strictly counterclockwise as $t$ increases.
Furthermore, $\gamma$ is a loop and each mass of $\widetilde{F}_{2k'+1}(\gamma(t),1)$ traverses the entire circle exactly once as $t$ ranges from $0$ to $1$, so there is some $t_0$ such that $\widetilde{F}_{2k'+1}(\gamma(t_0),1) = \mu$.
This shows that $q(\gamma(t_0)) = q(\mu)$, and since $\gamma(t_0) \in \partial R_{2k}$, we can conclude that $q|_{\partial R_{2k}}$ is surjective. 

To show that $q|_{\partial R_{2k} \cap V_{2k-1}}$ is injective a bijection onto $q(V_{2k-1})$, consider any equivalence class in $q(V_{2k-1})$ and let $\mu'$ be a representative measure with support a regular $(2k-1)$-gon.
We show the equivalence class of $\mu'$ intersects $\partial R_{2k} \cap V_{2k-1}$ in a single point.
Let $a'_0, \dots, a'_{2k-2}$ be the masses at the support points of $\mu'$, ordered counterclockwise around the circle, and note that $a'_i > 0$ for all $i$, since $\mu' \in V_{2k-1}$.
Define each $\gamma'_l$ like $\gamma_l$ above, with $a'_i$ in place of $a_i$ in each case, and define $\gamma'$ similarly to $\gamma$.
If $\nu' \in \partial R_{2k} \cap V_{2k-1}$ satisfies $q(\nu') = q(\mu')$, then $\nu'$ must have $2k-1$ arcs, so one of the positions will have zero mass.  
Then the opposite two positions are in the same $\nu'$-arc, and the remaining positions must each be in separate $\nu'$-arcs.
It follows that the masses of $\nu'$ are, in order counterclockwise and beginning with the two positions opposite a position with mass zero,
\[
a'_j-t, t, a'_{j+1}, \dots, a'_{j+k-1}, 0, a'_{j+k}, \dots, a'_{j+2k-2}
\]
for some $j$ and some $t \in [0, a'_j]$.
In fact, if $t = a'_j$, we could instead write the list above starting with $a'_{j+k-1}$ (or starting with the only nonzero mass if $k=1$), so the masses of $\nu'$ can actually be written as above with $t \in [0, a'_j)$.
Thus, we have $\nu' = \gamma'_l(t)$ for some $l$ and $t \in [0,a'_{l(k-1)})$: in particular, we can choose $0 \leq l < (2k-1)(2k+1)$ by the Chinese remainder theorem.
Therefore, every $\nu' \in \partial R_{2k} \cap V_{2k-1}$ satisfying $q(\nu') = q(\mu')$ is of the form $\nu' = \gamma'(t)$ for some $t \in [0,1)$, so it is sufficient to show that if $q(\gamma'(t_1)) = q(\gamma'(t_2))$, then $\gamma'(t_1) = \gamma'(t_2)$.

The simplest case occurs when the masses of $\mu'$ have no rotational symmetry: that is, there is no nontrivial cyclic permutation of its masses that leaves it unchanged.
In this case, since the masses of $\widetilde{F}_{2k-1}(\gamma'(t), 1)$ move strictly counterclockwise as $t$ increases and traverse the circle exactly once, there is a unique $t \in [0,1)$ such that $q(\gamma'(t)) = q(\mu')$.
Now consider the case where the masses of $\mu'$ have some nontrivial symmetry: let $j$ be the least positive integer dividing $2k-1$ such that $a'_{i+j} = a'_i$ for all $i$.
Then once again, since each mass of $\widetilde{F}_{2k-1}(\gamma'(t),1)$ traverses the circle exactly once as $t$ ranges from $0$ to $1$, there must be exactly $\frac{2k-1}{j}$ values of $t \in [0,1)$ such that $q(\gamma'(t)) = q(\mu')$.
We show that each of these values of $t$ yields the same value of $\gamma'(t)$.
It can be checked that for any $l$, $\gamma'_{l+2k+1}(t)$ is defined by the formula for $\gamma'_{l}(t)$ with each $a'_i$ replaced by $a'_{i-1}$.
Applying the assumed symmetry, $\gamma'_{l+(2k+1)j} = \gamma'_{l}$ for each $l$, so $\gamma'_l = \gamma'_{l'}$ if $l \equiv l' \mod{(2k+1)j}$.
Therefore, there must be an index $0 \leq l_0 < (2k+1)j$ and a $t_0 \in [0,a'_{l_0(k-1)})$ such that $q(\gamma'_{l_0}(t_0)) = q(\mu')$.
Furthermore, we have defined $\gamma' = \gamma'_0 \cdot \gamma'_1 \cdot \cdot \cdot \gamma'_{(2k-1)(2k+1)-1}$ and we have $\gamma'_{l_0+n(2k+1)j}(t_0) = \gamma'_{l_0}(t_0)$ for each $0 \leq n < \frac{2k-1}{j}$. 
Therefore, the $\frac{2k-1}{j}$ values of $t \in [0,1)$ such that $q(\gamma'(t)) = q(\mu')$ all satisfy $\gamma'(t) = \gamma'_{l_0}(t_0)$, so there is exactly one $\nu' \in \partial R_{2k} \cap V_{2k-1}$ such that $q(\nu') = q(\mu')$. 
\end{proof}

We can now describe $\vrmcircleq / \sim$ as a simple CW complex, with one cell in each dimension from $0$ to $2K+1$.
See Figure~\ref{figure:cell_structure} for an illustration of the case of $K=1$.
We partition $\vrmcircleq / \sim$ into cells $C_0, \dots, C_{2K+1}$: for $0 \leq k \leq K$, define
\begin{align*}
C_{2k} &= q(R_{2k})\\   
C_{2k+1} &= q(V_{2k+1}) - q(R_{2k}).
\end{align*}
Since $q$ only identifies a measure with measures that have the same number of arcs, the collection of subspaces $q(V_{2k+1})$ for all $k \geq 0$ partitions $\vrmcircleq / \sim$, and thus the cells $C_0, \dots, C_{2K+1}$ partition $\vrmcircleq / \sim$ as well.
Since $\vrmcircleq / \sim$ is Hausdorff by Lemma~\ref{lemma:Hausdorff}, to give $\vrmcircleq / \sim$ the structure of a CW complex, it is sufficient to construct for each $n \geq 1$ a map from a closed $n$-disk $D^n$ into $\vrmcircleq / \sim$ such that the interior is mapped homeomorphically onto $C_n$ and the boundary is mapped into the union of the lower dimensional cells; see Proposition A.2 of~\cite{Hatcher}.
We will write each $n$-skeleton as $X_n = C_0 \cup \dots \cup C_n$, so for each $0 \leq k \leq K$, we have
\begin{align*}
X_{2k} &= q(W_{2k-1}) \cup q(R_{2k})\\
X_{2k+1} &= q(W_{2k+1})
\end{align*}

We consider the even dimensions first.
For $k=0$, the single $0$ cell is $q(R_{0}) = \{ q(\delta_{[0]}) \}$.
For each $k \geq 1$, choosing a homeomorphism $D^{2k} \to \overline{R_{2k}}$ that maps $S^{2k-1}$ homeomorphically onto $\partial R_{2k}$, we define the characteristic map $\Phi_{2k}$ by the following composition:
\[
\begin{tikzcd}
D^{2k} \arrow[r, "\cong"] & \overline{R_{2k}} \arrow[r, hook] & \vrmcircleq \arrow[r, "q"] & \vrmcircleq/\sim.
\end{tikzcd}  
\]
Combining Lemma~\ref{lemma:Hausdorff} with the closed map lemma, we find that $\Phi_{2k}$ is a closed map.
Since $q$ is injective on $R_{2k}$, $\Phi_{2k}$ maps the interior of $D^{2k}$ bijectively onto $C_{2k}$.
It can be checked\footnote{In general, if $f \colon X \to Y$ is a closed map and $A \subseteq X$ is an $f$-saturated set, then $f|_A \colon A \to f(A)$ is a closed map.
We will use this fact once more below.} that since $\Phi_{2k}$ is a closed map and the interior of $D^{2k}$ is $\Phi_{2k}$-saturated, it is in fact mapped homeomorphically onto $C_{2k}$.
Because $\partial R_{2k}$ consists of measures with less than $2k+1$ arcs, the boundary of $D_{2k}$ is sent into $q(W_{2k-1}) = X_{2k-1}$, as required.

For the odd dimensions, for any $k \geq 1$, we consider $D^{2k} \times I$ as a $(2k+1)$-cell and construct a map into $\vrmcircleq$.
We can choose a continuous, surjective map $D^{2k} \times I \to \overline{P_{2k+1}}$
that, for each $t \in I$, maps $D^{2k} \times \{t\}$ homeomorphically onto the set of measures with support contained in $\{ [\frac{t}{2k+1}2 \pi], [\frac{1+t}{2k+1}2 \pi], \dots, [\frac{2k+t}{2k+1} 2\pi] \}$ and maps $S^{2k-1} \times \{t\}$ to the set of such measures with zero mass at at least one of these points.
Thus, $I$ parameterizes the regular $(2k+1)$-gons and $D^{2k} \times \{0\}$ and $D^{2k} \times \{1\}$ are both mapped into $\overline{R_{2k}}$.
Define $\Phi_{2k+1}$ by the following composition:
\[
\begin{tikzcd}
D^{2k}\times I \arrow[r] & \overline{P_{2k+1}} \arrow[r, hook] & \vrmcircleq \arrow[r, "q"] & \vrmcircleq/\sim
\end{tikzcd}
\]
Each element of $q(V_{2k+1})$ can be represented by a unique measure in $P_{2k+1}$, so by an argument similar to the above, $\Phi_{2k+1}$ maps the interior of $D^{2k} \times I$ homeomorphically onto $C_{2k+1}$.
Furthermore, points in the boundary of $D^{2k} \times I$ are mapped into either $q(\partial P_{2k+1}) \subseteq q(W_{2k-1})$ or $q(R_{2k})$, so the boundary is mapped into $X_{2k}$.
We have thus shown $\vrmcircleq / \sim$ has the CW-complex structure described above.

We now find the homotopy types of the skeletons: we show for each $k \geq 0$ that $X_{2k} \simeq D^{2k} \simeq \{ \ast \}$ and $X_{2k+1} \simeq S^{2k+1}$.
We use induction on $k$ to construct, for each $k \geq 0$, a homotopy equivalence $\varphi_{2k+1} \colon X_{2k+1} \to S^{2k+1}$ that maps $X_{2k}$ to a point $z \in S^{2k+1}$ and maps the cell $C_{2k+1}$ homeomorphically onto $S^{2k+1} - \{z\}$.
For the base case, $q(R_{0}) = \{ q(\delta_{[0]}) \}$ is the single $0$-cell, and since $X_1 = q(W_1)$ is formed by gluing a $1$-cell to by its two boundary points to the zero cell, we in fact have a homeomorphism $X_1 \cong S^1$ that maps $C_1$ homeomorphically onto $S^1 - \{ [0] \}$.

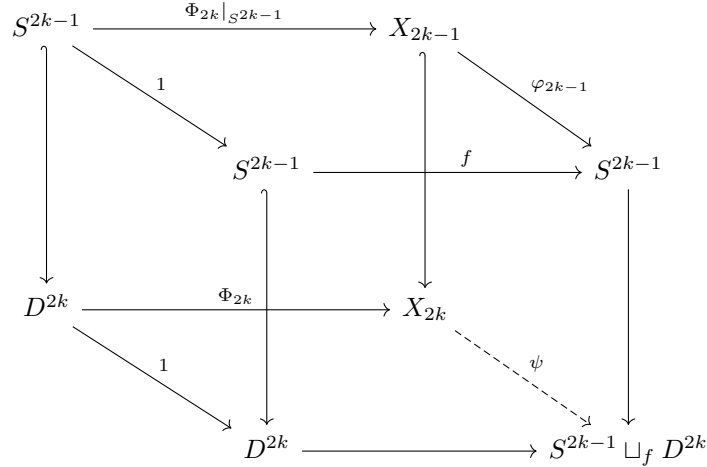
\begin{figure}
    \centering
    \[
\begin{tikzcd}
S^{2k-1} \arrow[rrr, "\Phi_{2k} |_{S^{2k-1}}"] \arrow[rrdd, "1"] \arrow[dddd, hook'] &  &                                                            & X_{2k-1} \arrow[dddd, hook'] \arrow[rdd, "\varphi_{2k-1}"] &                            \\
                                                                                     &  &                                                            &                                                            &                            \\
                                                                                     &  & S^{2k-1} \arrow[dddd, hook'] \arrow[rr, "\hspace{.5cm} f"] &                                                            & S^{2k-1} \arrow[dddd]      \\
                                                                                     &  &                                                            &                                                            &                            \\
D^{2k} \arrow[rrr, "\Phi_{2k}"] \arrow[rrdd, "1"]                                    &  &                                                            & X_{2k} \arrow[rdd, "\psi", dashed]                         &                            \\
                                                                                     &  &                                                            &                                                            &                            \\
                                                                                     &  & D^{2k} \arrow[rr]                                          &                                                            & S^{2k-1} \sqcup_{f} D^{2k}
\end{tikzcd}
\]
    \caption{Diagram for determining the homotopy type of $X_{2k}$.  The front and back squares are pushouts and the map $\psi$ is a homotopy equivalence.}
    \label{fig:cube_diagram}
\end{figure}

For the inductive step, let $k \geq 1$ and suppose $\varphi_{2k-1} \colon X_{2k-1} \to S^{2k-1}$ is a homotopy equivalence that maps $X_{2k-2}$ to a point $z \in S^{2k-1}$ and maps the cell $C_{2k-1}$ homeomorphically onto $S^{2k-1} - \{z\}$.
In the diagram in Figure~\ref{fig:cube_diagram}, $\Phi_{2k} \colon D^{2k} \to X_{2k}$ is the characteristic map defined above, with the codomain restricted. 
By Lemma~\ref{lemma:quotients_of_individual_polygons}, $\Phi_{2k}|_{S^{2k-1}}$ is surjective, and this implies $\Phi_{2k}$ is also surjective.
Since $\Phi_{2k}$ is a closed map, this implies it is a quotient map, and these facts can be used to check that the square in the diagram containing $\Phi_{2k}$ and $\Phi_{2k}|_{S^{2k-1}}$ is a pushout.
Letting $f = \varphi_{2k-1} \circ \Phi_{2k}|_{S^{2k-1}}$, the diagram commutes and both the front and back squares are pushouts.
By the gluing theorem for adjunction spaces (7.5.7 of ~\cite{Brown}), the resulting map $\psi \colon X_{2k} \to S^{2k-1} \sqcup_{f} D^{2k}$ defined by the universal property of pushouts is a homotopy equivalence.
Since $(D^{2k}, S^{2k-1})$ has the HEP, the homotopy type of $S^{2k-1} \sqcup_{f} D^{2k}$ depends only on the homotopy equivalence class of the map $f$ (Proposition 0.18 of~\cite{Hatcher}).
This, in turn, depends only on the degree of the map $f$ (see, for instance, Corollary 4.25 of ~\cite{Hatcher}), which we will find by considering the local degree at a suitable point.

Since $(q|_{\partial R_{2k}})^{-1} (C_{2k-1}) \subseteq (q|_{\partial R_{2k}})^{-1} (q(V_{2k-1})) \subseteq V^{2k-1} \cap \partial R_{2k}$,
Lemma~\ref{lemma:quotients_of_individual_polygons} shows $q$ restricts to a bijection from $(q|_{\partial R_{2k}})^{-1} (C_{2k-1})$ onto $C_{2k-1}$.
Furthermore, $\Phi_{2k}|_{S^{2k-1}}$ factors as 
\[
\begin{tikzcd}
S^{2k-1} \arrow[r, "\cong"] & \partial R_{2k} \arrow[r, "q|_{\partial R_{2k}}"] & q(W_{2k-1}),
\end{tikzcd}
\]
so $\Phi_{2k}|_{S^{2k-1}}$ restricts to a bijection from $(\Phi_{2k}|_{S^{2k-1}})^{-1}(C_{2k-1})$ onto $C_{2k-1}$.
Since $\Phi_{2k}$ is a closed map and $S^{2k-1}$ is $\Phi_{2k}$-saturated, $\Phi_{2k}|_{S^{2k-1}} \colon S^{2k-1} \to X_{2k-1}$ is also a closed map.
Similarly, since $(\Phi_{2k}|_{S^{2k-1}})^{-1}(C_{2k-1})$ is $\Phi_{2k}|_{S^{2k-1}}$-saturated, it can be checked that the restriction of $\Phi_{2k}|_{S^{2k-1}}$ to $(\Phi_{2k}|_{S^{2k-1}})^{-1}(C_{2k-1})$ is in fact a homeomorphism onto $C_{2k-1}$.
By the inductive hypothesis, we have $\varphi_{2k-1}^{-1}( S^{2k-1} - \{z\}) = C_{2k-1}$, and this cell is mapped homeomorphically onto $S^{2k-1} - \{z\}$ by $\varphi_{2k-1}$, so we can conclude that $f$ restricts to a homeomorphism from $f^{-1}(S^{2k-1} - \{z\})$ onto $S^{2k-1} - \{z\}$.
Therefore, the local degree of $f$ at any point in $f^{-1}(S^{2k-1} - \{z\})$ is $\pm 1$, which shows that the degree of $f$ is $\pm 1$ (see Proposition 2.30 of~\cite{Hatcher}).
This shows $S^{2k-1} \sqcup_{f} D^{2k}$ is homotopy equivalent to the space formed by gluing the boundary of $D^{2k}$ to $S^{2k-1}$ by the identity map, which is homeomorphic to $D^{2k}$.
Thus, we find $X_{2k} \simeq S^{2k-1} \sqcup_{f} D^{2k} \simeq D^{2k} \simeq \{ \ast \}$.

Finally, since CW pairs have the HEP and we have shown $X_{2k}$ is contractible, the quotient map $X_{2k+1} \to X_{2k+1} / X_{2k}$ is a homotopy equivalence by Proposition 0.17 of~\cite{Hatcher} (or by our Proposition~\ref{prop:quotient_homotopy_equivalences}).
In our case, $X_{2k+1}$ is obtained by gluing single a $(2k+1)$-cell by its boundary to $X_{2k}$, and thus we have the homotopy equivalence $\varphi_{2k+1}$ defined by the composition
\[
\begin{tikzcd}
X_{2k+1} \arrow[r] & X_{2k+1}/X_{2k} \arrow[r, "\cong"] & D^{2k+1}/S^{2k} \arrow[r, "\cong"] & S^{2k+1}.
\end{tikzcd}
\]
Furthermore, $\varphi_{2k+1}$ sends $X_{2k}$ to a single point of $S^{2k+1}$ and sends the cell $C_{2k+1}$ homeomorphically onto the remainder of $S^{2k+1}$, completing the inductive step.

We have thus found the homotopy types of the skeletons.
Furthermore, for $r \geq \pi$, it can be checked that $\vrmcircleq$ is contractible\footnote{One simple option to show $\vrmcircleq$ is contractible is to use a linear homotopy in the metric thickening.
Specifically, fix some $\mu_0 \in \vrmcircleq$ and define $L \colon \vrmcircleq \times I \to \vrmcircleq$ by $L(\mu,t) = (1-t)\mu + t\mu_0$.
Linear homotopies have played a significant role in previous work: see Lemma~3.9 of~\cite{AAF}, Lemma~4 of~\cite{Moy_Masters_Thesis}, and Proposition~2.4 of~\cite{vrp_arxiv}.}.
Recalling the $r$ values for which $V_{2k+1}(r)$ is nonempty, described in Proposition~\ref{prop:facts_about_arcs_and_V_2k+1}, we have proved the following theorem.

\begin{thm}\label{thm:vrm_homotopy_equivalent_to_spheres}
For each $k \geq 0$, if $V_{2k+1}(r)$ is nonempty, then $q(W_{2k+1}(r)) \simeq S^{2k+1}$.
This implies
\[
\vrmleq{S^1}{r} \simeq \begin{cases}
S^{2k+1} & \text{ if $r \in \big[\frac{2k \pi}{2k+1}, \frac{(2k+2)\pi}{2k+3}\big)$} \\
\{\ast\} & \text{ if $r \geq \pi$}.
\end{cases}
\]
\end{thm}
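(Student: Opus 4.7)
My plan is to assemble the pieces already built in the preceding sections. By Theorem~\ref{theorem:vrm_homotopy_equivalent_to_quotient}, the quotient map $q \colon \vrmleq{S^1}{r} \to \vrmleq{S^1}{r}/\sim$ is a homotopy equivalence, so the first step is to identify $\vrmleq{S^1}{r}/\sim$ with $q(W_{2K+1}(r))$, where $K = K(r)$ is the integer supplied by Proposition~\ref{prop:facts_about_arcs_and_V_2k+1}. This is immediate from $\vrmleq{S^1}{r} = W_{2K+1}(r)$, and more generally each $q(W_{2k+1}(r))$ is exactly the $(2k+1)$-skeleton $X_{2k+1}$ of the CW structure on the quotient just constructed.

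Next, I would invoke the inductively constructed homotopy equivalence $\varphi_{2k+1} \colon X_{2k+1} \to S^{2k+1}$, which applies for every $0 \leq k \leq K(r)$. Together with the preceding identification, this gives $q(W_{2k+1}(r)) \simeq S^{2k+1}$, proving the first assertion of the theorem. Setting $k = K(r)$ and chaining with Theorem~\ref{theorem:vrm_homotopy_equivalent_to_quotient} yields $\vrmleq{S^1}{r} \simeq S^{2K+1}$. The description of the $r$-range $\big[\tfrac{2k\pi}{2k+1}, \tfrac{(2k+2)\pi}{2k+3}\big)$ on which this sphere appears is then read off directly from Proposition~\ref{prop:facts_about_arcs_and_V_2k+1}, which pins down $K(r) = k$ on exactly this interval.

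The remaining case is $r \geq \pi$, where the arc machinery is not available because antipodal support points are permitted. Here I would give a direct contractibility argument: fix any $\mu_0 \in \vrmleq{S^1}{r}$ and define the linear homotopy $L \colon \vrmleq{S^1}{r} \times I \to \vrmleq{S^1}{r}$ by $L(\mu,t) = (1-t)\mu + t\mu_0$. Since $r \geq \pi \geq \diam(S^1)$, every finitely supported probability measure on $S^1$ lies in $\vrmleq{S^1}{r}$, so $L$ is well-defined, and continuity in the $1$-Wasserstein distance is the standard linear-homotopy estimate of the sort recorded in Lemma~3.9 of~\cite{AAF}. Thus $L$ contracts $\vrmleq{S^1}{r}$ to $\mu_0$.

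Because the hard work has already been done, namely the existence of the homotopy equivalence $q$, the CW structure on the quotient, and the inductive homotopy equivalences $\varphi_{2k+1}$, there is no substantive obstacle remaining. The theorem is essentially a bookkeeping statement collecting these facts and translating the index $K$ into the stated intervals of $r$ via Proposition~\ref{prop:facts_about_arcs_and_V_2k+1}; the only care needed is to separate out the $r \geq \pi$ case, where the whole arc/$V_{2k+1}$ framework no longer defines a nontrivial cell structure and a direct linear-homotopy argument replaces it.
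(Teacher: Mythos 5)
Your proposal is correct and follows essentially the same route as the paper: identify $q(W_{2k+1}(r))$ with the $(2k+1)$-skeleton $X_{2k+1}$, invoke the inductively constructed homotopy equivalences $\varphi_{2k+1}\colon X_{2k+1}\to S^{2k+1}$ together with Theorem~\ref{theorem:vrm_homotopy_equivalent_to_quotient} and Proposition~\ref{prop:facts_about_arcs_and_V_2k+1} to read off the $r$-intervals, and dispose of the $r\geq\pi$ case by a linear homotopy. This is precisely the paper's assembly of the preceding results (including the linear-homotopy footnote), so there is nothing further to compare.
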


\vspace{.15cm}
\section{Persistent Homology}\label{section:persistent_homology}

Finally, we will address the inclusion maps between the metric thickenings as the parameter $r$ varies and find the associated persistent homology barcodes.
As mentioned above, the barcodes are already implied by previous work, as long as we disregard whether endpoints of bars are open or closed.
Thus, this section simply demonstrates that the techniques we have used are sufficient to find the barcodes directly; this will hopefully be of use in future research.

Here we must be careful to distinguish between the quotients we have constructed at different values of the parameter $r$.
Let $k \geq 0$ and let $r, r' \in \big[\frac{2k \pi}{2k+1}, \frac{(2k+2)\pi}{2k+3}\big)$ with $r \leq r'$.
Let the equivalence relations on $\vrmleq{S^1}{r}$ and $\vrmleq{S^1}{r'}$ described in Theorem~\ref{theorem:vrm_homotopy_equivalent_to_quotient} be denoted $\sim$ and $\sim'$ respectively, and let the corresponding quotient maps be $q$ and $q'$ respectively.
In both quotients $\vrmleq{S^1}{r} / \sim$ and $\vrmleq{S^1}{r'} / \sim'$, any equivalence class can be represented by a regular polygonal measure with at most $2k+1$ support points, and all such regular polygonal measures represent distinct equivalence classes.
Since the definition of each $\widetilde{F}_{2l+1}$ does not depend on the parameter $r$, if $\mu \in \vrmleq{S^1}{r} \subseteq \vrmleq{S^1}{r'}$, then $q(\mu)$ and $q'(\mu)$ are in fact represented by the same polygonal measure.
We thus have a homeomorphism $\vrmleq{S^1}{r} / \sim \,\, \to \vrmleq{S^1}{r'} / \sim'$ that sends the equivalence class of a regular polygonal measure in $\vrmleq{S^1}{r}$ to the equivalence class of the same regular polygonal measure in $\vrmleq{S^1}{r'}$ (note that this homeomorphism can be viewed as the natural homeomorphism of the CW complexes constructed in Section~\ref{section:cw_complex_and_homotopy_types}).
This homeomorphism makes the following diagram commute:

\[
\begin{tikzcd}
\vrmleq{S^1}{r} \arrow[rr, hook] \arrow[dd, "q"'] &  & \vrmleq{S^1}{r'} \arrow[dd, "q'"] \\
                                          &  &                          \\
\frac{\vrmleq{S^1}{r}}{\sim} \arrow[rr]           &  & \frac{\vrmleq{S^1}{r'}}{\sim'}.    
\end{tikzcd}
\]

The vertical maps are homotopy equivalences by Theorem~\ref{theorem:vrm_homotopy_equivalent_to_quotient} and the bottom map is the homeomorphism described above.
Therefore, after applying a singular homology functor $H_n$ in any dimension $n \geq 0$ and with coefficients in any fixed field, we obtain a commutative square in which each map is an isomorphism:

\[
\begin{tikzcd}
H_n(\vrmleq{S^1}{r}) \arrow[rr] \arrow[dd]   &  & H_n(\vrmleq{S^1}{r'}) \arrow[dd]   \\
                                 &  &                       \\
H_n\left(\frac{\vrmleq{S^1}{r}}{\sim}\right) \arrow[rr] &  & H_n\left(\frac{\vrmleq{S^1}{r'}}{\sim'}\right).
\end{tikzcd}
\]
By Theorem~\ref{thm:vrm_homotopy_equivalent_to_spheres}, both $\vrmleq{S^1}{r} / \sim$ and $\vrmleq{S^1}{r'} / \sim'$ are homotopy equivalent to $S^{2k+1}$.
From the homology of spheres, for any $r \in \big[\frac{2k \pi}{2k+1}, \frac{(2k+2)\pi}{2k+3}\big)$, we find that $H_{0}(\vrmleq{S^1}{r})$ and $H_{2k+1}(\vrmleq{S^1}{r})$ are 1-dimensional and the homology in all other dimensions is zero.
Applying these facts across all scale parameters $r$, this shows that the quotient maps induce an isomorphism of persistence modules between $H_n(\vrmleq{S^1}{\_\,})$ and $H_n\left(\frac{\vrmleq{S^1}{\_\,}}{\sim}\right)$.
In particular, for zero-dimensional homology, we note that the class of a fixed delta measure is a generator for all $r \geq 0$.
By considering all $k \geq 0$, we can find the persistent homology in all dimensions.
The barcodes are given in the following theorem.

\begin{thm}
\label{thm:vrm_barcodes}
The filtration $\vrmleq{S^1}{\_\,}$ of Vietoris--Rips metric thickenings of the circle has one persistent homology bar $[0, \infty)$ in dimension 0, one bar $\big[\frac{2k \pi}{2k+1}, \frac{(2k+2)\pi}{2k+3}\big)$ in dimension $2k+1$ for each $k \geq 0$, and no bars in the remaining dimensions.
\end{thm}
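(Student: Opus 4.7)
The plan is to extract the barcodes as an essentially immediate consequence of the isomorphism of persistence modules $H_n(\vrmleq{S^1}{\_\,}) \cong H_n(\vrmleq{S^1}{\_\,}/\sim)$ established in the preceding discussion, combined with Theorem~\ref{thm:vrm_homotopy_equivalent_to_spheres}. The latter gives the homotopy type of $\vrmleq{S^1}{r}$ at each scale, and via the homology of spheres this pins down the dimension of $H_n(\vrmleq{S^1}{r})$ at every scale and in every dimension. The preceding commutative square additionally shows that for $r \leq r'$ lying in the same interval $\big[\frac{2k\pi}{2k+1}, \frac{(2k+2)\pi}{2k+3}\big)$, the inclusion $\vrmleq{S^1}{r} \hookrightarrow \vrmleq{S^1}{r'}$ induces an isomorphism on all $H_n$.

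In dimension $0$, for every $r \geq 0$ the space $\vrmleq{S^1}{r}$ is nonempty and path connected (being homotopy equivalent to a sphere), so $H_0$ is one-dimensional, and the class of the delta measure $\delta_{[0]}$ serves as a consistent generator across all inclusions. This yields the single bar $[0, \infty)$. In an odd dimension $2k+1$, Theorem~\ref{thm:vrm_homotopy_equivalent_to_spheres} shows $H_{2k+1}(\vrmleq{S^1}{r})$ is one-dimensional precisely when $r \in \big[\frac{2k\pi}{2k+1}, \frac{(2k+2)\pi}{2k+3}\big)$ and zero otherwise. On this interval the inclusion maps induce isomorphisms between these one-dimensional spaces. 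For any $r$ just below $\frac{2k\pi}{2k+1}$ the source $H_{2k+1}(\vrmleq{S^1}{r})$ vanishes, so the class is born exactly at $\frac{2k\pi}{2k+1}$; for any $r \geq \frac{(2k+2)\pi}{2k+3}$ the target $H_{2k+1}(\vrmleq{S^1}{r})$ vanishes, so the class dies exactly at $\frac{(2k+2)\pi}{2k+3}$. This yields the single bar $\big[\frac{2k\pi}{2k+1}, \frac{(2k+2)\pi}{2k+3}\big)$, with the left endpoint closed and the right endpoint open.

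In all remaining dimensions $H_n(\vrmleq{S^1}{r}) = 0$ for every $r$, so no bars are produced. There is no real obstacle here; the substantive work has already been carried out in establishing Theorem~\ref{thm:vrm_homotopy_equivalent_to_spheres} and the commutative square of isomorphisms above. The only subtlety worth highlighting is that the half-open structure of the bars is forced by the half-open form of the intervals of Theorem~\ref{thm:vrm_homotopy_equivalent_to_spheres}, which in turn reflects the fact that at the left endpoint $r = \frac{2k\pi}{2k+1}$ the regular $(2k+1)$-gonal measures first become admissible (so $V_{2k+1}(r)$ becomes nonempty at this value of $r$), whereas $\frac{(2k+2)\pi}{2k+3}$ is the first scale at which $V_{2k+3}$ appears and the homotopy type jumps up.
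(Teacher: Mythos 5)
Your proposal is correct and follows essentially the same route as the paper: pass through the commutative square of homotopy equivalences and quotient homeomorphisms to deduce that inclusions within each interval $\bigl[\frac{2k\pi}{2k+1},\frac{(2k+2)\pi}{2k+3}\bigr)$ induce isomorphisms on homology, read off the pointwise homology dimensions from Theorem~\ref{thm:vrm_homotopy_equivalent_to_spheres}, and note the fixed delta measure as a persistent generator in degree~0. The only (inconsequential) imprecision is the parenthetical claim that $\vrmleq{S^1}{r}$ is path connected ``being homotopy equivalent to a sphere''; for $r \geq \pi$ it is contractible rather than a sphere, though still connected, so the conclusion is unaffected.
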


\begin{figure}[h]
    \centering
    \fontsize{9pt}{11pt}
    \def\svgwidth{.85\textwidth}
    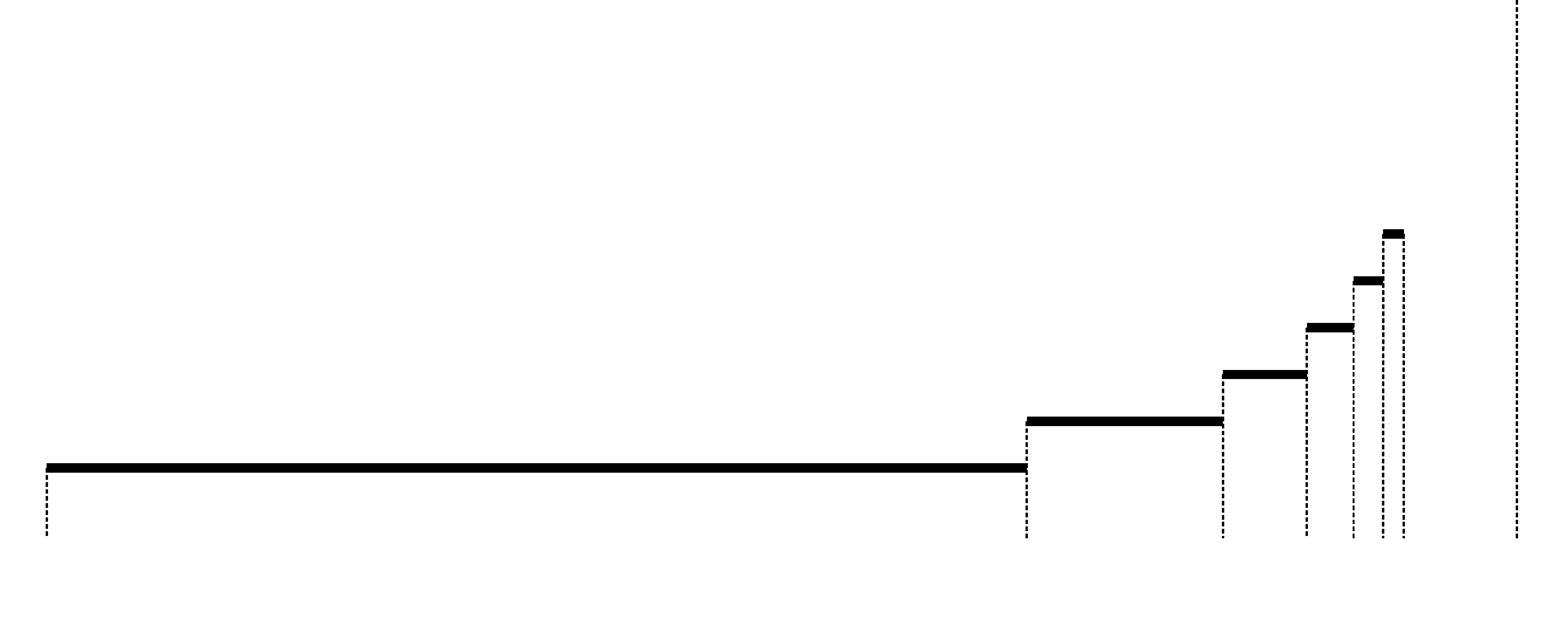
    \caption{Visualization of the reduced persistent homology bars of $\vrmleq{S^1}{\_\,}$.
    There is one bar in each odd dimension, corresponding to the homotopy types of odd-dimensional spheres.}
    \label{fig:barcodes}  
\end{figure}

\vspace{.1cm}
\section{Conclusion}
While certain techniques used here depend on specific properties of the circle, such as characterizing a measure $\mu$ by the number of $\mu$-arcs, some ideas will hopefully generalize to other settings.
The notion of a support homotopy already applies to more general Vietoris--Rips metric thickenings (and other simplicial metric thickenings), as we proved Lemma~\ref{lemma_support_homotopy} with only the requirement of a bounded metric space.
This suggests that the idea of collapsing a Vietoris--Rips metric thickening to a subset of representative measures, which we did via a support homotopy, could work in other spaces as well.
Here we used $\mu$-arcs to describe how to identify $\mu$ with a simple representative in a quotient.
In another space, a similar description of what portions of the space are excluded by a support point may lead to a similar, albeit possibly more complicated, way to choose a representative.

For those interested in pursuing other specific spaces, natural choices include spheres of higher dimensions.
Previous work indicates the homotopy types are likely to be more complicated than those of the circle: see Theorem~5.4 of~\cite{AAF}.
Another approach to future work would be to examine what techniques presented here can be generalized to reasonable classes of metric spaces, for instance, those that have the topology of a compact manifold.
Ideally, we would eventually be able to show that Vietoris--Rips metric thickenings (and possibly other simplicial metric thickenings) of metric spaces meeting reasonable conditions are homotopy equivalent to CW complexes, thus providing an analog of Morse theory.
Even in cases where the homotopy types are too difficult to describe fully, such a theory could provide bounds on the dimensions of homology modules and the numbers of persistent homology bars, analogous to the Morse inequalities.
Results in this direction will improve our understanding of not only Vietoris--Rips metric thickenings but also of Vietoris--Rips simplicial complexes, since in terms of persistent homology, these constructions are essentially interchangeable.

\newpage
\section{Acknowledgements}


I would like to thank Henry Adams for discussing this project many times and for reading the paper as it was in progress.

\vspace{1cm}
\bibliographystyle{plain}
\bibliography{main}


\newpage
\appendix

\vspace{.15cm}
\section{Proof of Lemma~\ref{lemma:closure_of_V_2k+1}}

Here we prove Lemma~\ref{lemma:closure_of_V_2k+1}, which states that $\mu \in \overline{V_{2k+1}(r)}$ if and only if $\supp(\mu)$ is contained in a finite set $T \subset S^1$ such that $\diam(T) \leq r$ and $\arcs_r(T) = 2k+1$.

\begin{proof}[Proof of Lemma~\ref{lemma:closure_of_V_2k+1}]
Let $C$ be the set of measures $\mu \in \vrmcircleq$ with $\supp(\mu)$ contained in some finite set $T \subset S^1$ such that $\diam(T) \leq r$ and $\arcs_r(T) = 2k+1$.
We must show $C = \overline{V_{2k+1}}$, and we start by noting that $V_{2k+1} \subseteq C$.  
We first show $C \subseteq \overline{V_{2k+1}}$.
We can write any $\alpha \in C$ in the form $\alpha = \sum_{i=0}^{n} a_i \delta_{[\theta_i]}$, with $a_i \geq 0$ for each $i$ and such that $\diam(\{ [\theta_0], \dots, [\theta_n] \}) \leq r$ and $\arcs_r(\{ [\theta_0], \dots, [\theta_n] \}) = 2k+1$ (note that some $a_i$ may be $0$, allowing for the case when the support of $\mu$ is strictly contained in $\{ [\theta_0], \dots, [\theta_n] \}$).
For each positive integer $j$, let $\alpha_j = \sum_{i=0}^n \left((1-\frac{1}{j})a_i + \frac{1}{j}\frac{1}{(n+1)}\right) \delta_{[\theta_i]}$. 
Then $\alpha_j \in V_{2k+1}$ for each $j$ and the sequence $\{ \alpha_j \}$ converges to $\alpha$, so $\alpha \in \overline{V_{2k+1}}$.

The remainder of the proof will handle the converse: we suppose $\mu \in \vrmcircleq - C$ and show $\mu \in \vrmcircleq - \overline{V_{2k+1}}$.
If $k=0$, then this is true since $V_1$ is closed (by Lemma~\ref{lemma:V_1_to_V_2k+1_closed}) and $V_1 \subseteq C$; thus, we may assume for the remainder of the proof that $k\geq 1$.
If $\arcs_r(\mu) > 2k+1$, then $\mu \in \vrmcircleq - W_{2k+1}$, so $\mu \in \vrmcircleq - \overline{V_{2k+1}}$ because Lemma~\ref{lemma:V_1_to_V_2k+1_closed} implies $\overline{V_{2k+1}} \subseteq W_{2k+1}$.
Thus, we consider the case where $\arcs_r(\mu) \leq 2k+1$, and in this case, we must in fact have $\arcs_r(\mu) < 2k+1$ because $V_{2k+1} \subseteq C$.  
Then for any finite set $T \subset S^1$ with $\diam(T) \leq r$ such that $\supp(\mu) \subseteq T$, we must have $\arcs_r(T) < 2k+1$, since $\mu \notin C$.

We examine the ways that points can be added to $\supp(\mu)$ to produce such a set $T$.
Begin by coloring the points of $\supp(\mu)$ blue and the points opposite them red.
From here on, whenever we color a point red or blue, we assume the point opposite it is colored the opposite color, and thus it is sufficient to describe colored points on half the circle.
Fix some blue point $[\theta] \in \supp(\mu)$, and let $A_1, \dots, A_N$ be all arcs between consecutive colored points on a fixed half of the circle between the blue point $[\theta]$ and the red point $[\theta + \pi]$. 
Then $\diam(A_i)$ is the length of the arc $A_i$ for each $i$.
In general, if a finite set of points on the circle are colored blue and the points opposite them are colored red, the set of blue points has diameter at most $r$ if and only if the distance between any blue point and any red point is at least $\pi-r$.
Following this restriction on distances, we search for a way to color additional points of an arc $A_i$ that produces the greatest increase in the number of arcs of the set of blue points.
Adding a pair of antipodal points, with one red and one blue, increases the number of arcs of the set of blue points by two if and only if the blue point is placed between consecutive colored points that are both red, which happens if and only if the red point is placed between consecutive colored points that are both blue.
If the endpoints of $A_i$ are both the same color, without loss of generality we let them be blue and note that after adding additional points, the increase in the number of arcs is equal to two times the number of new red points in $A_i$ immediately counterclockwise of a blue point.
There can be at most $\lfloor \frac{\diam(A_i)}{2(\pi-r)} \rfloor$
such red points because of the required distance between red and blue points, and this number of new red points can be achieved by placing points of alternating colors at distance $\pi-r$ from each other, beginning at one endpoint $A$ and continuing until no new red points can be placed.
Therefore $2\lfloor \frac{\diam(A_i)}{2(\pi-r)} \rfloor$ is the maximal increase in the number of arcs of the set of blue points that can be produced by coloring additional points of $A_i$, and this maximal increase can be achieved.
By similar reasoning, if one endpoint of $A_i$ is red and the other is blue, we find that the maximal increase is $2 \lfloor \frac{\diam(A_i) - (\pi-r)}{2(\pi-r)} \rfloor$.

If $T \subset S^1$ is any finite subset with $\diam(T) \leq r$ and such that $\supp(\mu) \subseteq T$, then $T$ can be obtained as a set of blue points meeting the description above.
Using the bounds on the maximal increases described above, we have
\[
\arcs_r(T) \leq \arcs_r(\mu) + \sum_{i \in I} 2 \Big\lfloor \frac{\diam(A_i)}{2(\pi-r)} \Big\rfloor + \sum_{i \in J} 2 \Big\lfloor \frac{\diam(A_i) - (\pi-r)}{2(\pi-r)} \Big\rfloor
\]
where $I$ is the set of all $i$ such that $A_i$ has endpoints of the same color and $J$ is the set of all $i$ such that $A_i$ has endpoints of opposite colors.
Furthermore, this bound is tight, since the maximal increase can be achieved for each $A_i$, so since $\mu \notin C$ implies $\arcs_r(T) < 2k+1$ for a $T$ producing the maximal increase in arcs, we have
\[
\arcs_r(\mu) + \sum_{i \in I} 2 \Big\lfloor \frac{\diam(A_i)}{2(\pi-r)} \Big\rfloor + \sum_{i \in J} 2 \Big\lfloor \frac{\diam(A_i) - (\pi-r)}{2(\pi-r)} \Big\rfloor
<
2k+1.
\]

We can now choose an $\varepsilon>0$ such that increasing any $\diam(A_i)$ by $2\varepsilon$ does not increase the value of any floor function above.
Specifically, choose $\varepsilon > 0$ so that
\[
\varepsilon < 
(\pi-r) \min_{i \in I} \Big(\Big\lfloor \frac{\diam(A_i)}{2(\pi-r)} \Big\rfloor + 1 - \frac{\diam(A_i)}{2(\pi-r)} \Big)
\]
and
\[
\varepsilon < 
(\pi-r) \min_{i \in J} \Big(\Big\lfloor \frac{\diam(A_i) - (\pi-r)}{2(\pi-r)} \Big\rfloor + 1 - \frac{\diam(A_i) - (\pi-r)}{2(\pi-r)} \Big)
,
\]
noting that each minimum is taken over a finite set of positive values.
By Lemma~\ref{lemma:close_measures_implies_close_masses}(\ref{lemma_item_support_points}), there exists a $\delta > 0$ such that if $\nu \in \vrmcircleq$ and $d_W(\mu,\nu) < \delta$, then each point of $\supp(\mu)$ has a point of $\supp(\nu)$ that is at distance less than $\varepsilon$.
For any such $\nu$, choose one such point in $\supp(\nu)$ for each point of $\supp(\mu)$ to define a set $U \subseteq \supp(\nu)$, and color the points of $U$ green and the points opposite them orange.
Shrinking $\varepsilon$ if necessary, we can assume each green point is within $\varepsilon$ of a unique blue point, and the ordering of the green and orange points matches the ordering of the corresponding blue and red points.
This implies that $\arcs_r(U) = \arcs_r(\mu)$; that the arcs $A_1, \dots, A_N$ above have corresponding arcs $A'_1, \dots, A'_N$ defined analogously for corresponding the green and orange points; and that for each $i$ the endpoints of $A'_i$ differ from the corresponding endpoints of $A_i$ by less than $\varepsilon$.
Since $U \subseteq \supp(\nu)$, $\arcs_r(\nu)$ can be bounded by the same method we used to bound $\arcs_r(T)$ above, replacing $A_i$ with $A'_i$ for each $i$.
For each $i$, $\diam(A'_i) < \diam(A_i) + 2 \varepsilon$, so by the choice of $\varepsilon$, we have 
\begin{align*}
\arcs_r(\nu) & \leq \arcs_r(U) + \sum_{i \in I} 2 \Big\lfloor \frac{\diam(A'_i)}{2(\pi-r)} \Big\rfloor + \sum_{i \in J} 2 \Big\lfloor \frac{\diam(A'_i) - (\pi-r)}{2(\pi-r)} \Big\rfloor\\
&=\arcs_r(\mu) + \sum_{i \in I} 2 \Big\lfloor \frac{\diam(A_i)}{2(\pi-r)} \Big\rfloor + \sum_{i \in J} 2 \Big\lfloor \frac{\diam(A_i) - (\pi-r)}{2(\pi-r)} \Big\rfloor \\
&<2k+1.
\end{align*}
This shows $\nu \notin V_{2k+1}$, so $\mu$ has an open neighborhood that does not intersect $V_{2k+1}$, and we can conclude $\mu \in \vrmcircleq - \overline{V_{2k+1}}$.
\end{proof}

\vspace{.15cm}
\section{Continuity of $G_{2k+1}$}\label{appendix:continuity_of_G}

We now return to check that each $G_{2k+1}$ is continuous.
The intuition is described in Section~\ref{section:sequence_of_quotients}.
The main challenge is that there is not a unique natural way to extend the definition of $\widetilde{F}_{2k+1}$ to $\partial V_{2k+1} \times I$, which makes it difficult to consider a limit as $\mu$ approaches $\partial V_{2k+1}$.
To handle this, we consider all sensible ways one could attempt to extend $\widetilde{F}_{2k+1}$ to a given point in $\partial V_{2k+1} \times I$ and find that there are finitely many.
This allows us to use a compactness argument to consider a limit as $\mu$ approaches $\partial V_{2k+1}$.

In the proof, it will be convenient to bound the 1-Wasserstein distance between measures by specifying how only part of the mass is transported.
Formally, this will be described by a \textit{partial matching} between measures $\mu = \sum_{i=1}^n a_i \delta_{[\theta_i]}$ and $\mu'= \sum_{j=1}^{n'} a'_j \delta_{[\theta'_j]}$, which is defined to be an indexed set ${\kappa = \{ \kappa_{i,j} \mid 1 \leq i \leq n, 1 \leq j \leq n' \}}$ of nonnegative real numbers such that $\sum_{i=1}^n \kappa_{i,j} \leq a'_j$ for all $j$ and $\sum_{j=1}^{n'} \kappa_{i,j} \leq a_i$ for all $i$.
A partial matching gives an incomplete description of how mass is transported from $\mu$ to $\mu'$, and the cost of a partial matching is defined in the same way as the cost of a matching.
Any partial matching from $\mu$ to $\mu'$ can be completed to a matching from $\mu$ to $\mu'$: that is, given a partial matching $\kappa$, there exists a matching $\kappa'$ such that $\kappa_{i,j} \leq \kappa'_{i,j}$ for all $i,j$.
The cost of transporting the remaining mass not accounted for by the partial matching $\kappa$ can be bounded using the diameter of $S^1$ (as a metric space): the maximum distance between two points of $S^1$ is $\pi$.

\begin{proof}[Proof of continuity of $G_{2k+1}$]

Note that $G_1 = \widetilde{F}_{1}$ is continuous, so we let $k \geq 1$.
It is sufficient to check sequential continuity for each point in $\partial V_{2k+1}$, since the continuity of $q_{2k-1}$ and $\widetilde{F}_{2k+1}$ imply that $G_{2k+1}$ is continuous on $V_{2k+1}$ and $W_{2k+1}-\overline{V_{2k+1}}$, which are open in $W_{2k+1}$.
Suppose $\{(\mu_n, t_n)\}_n$ is a sequence in $W_{2k+1} \times I$ that converges to $(\mu,t) \in \partial V_{2k+1} \times I$. 
We need to show $\{ G_{2k+1}(\mu_n,t_n) \}_n$ converges to $G_{2k+1}(\mu,t) = q_{2k-1}(\mu)$. 
For the subsequence consisting of those $(\mu_n, t_n)$ in $W_{2k-1} \times I$, we have $G_{2k+1}(\mu_n,t_n) = q_{2k-1}(\mu_n)$, and we can apply continuity of $q_{2k-1}$ to show this subsequence converges.
Thus, we can reduce to the case where $(\mu_n,t_n) \in V_{2k+1} \times I$ for all $n$.

Let $l < k$ be such that $\mu \in V_{2l+1}$.
By Lemma~\ref{lemma:close_measures_implies_close_masses}(\ref{lemma_item_close_arc_masses}), for any $\mu' \in V_{2k+1}$ sufficiently close to $\mu$, if we extend the arcs of $\mu'$ on either side by $\frac{\pi-r}{2}$, we obtain disjoint arcs $A_0, \dots, A_{2k}$ that collectively contain the support of $\mu$.
Let $(x,\tau)$ be a coordinate system that excludes a point not in these arcs and assume the arcs are in counterclockwise order starting from the excluded point.
As before, we let $v_{2k+1}^{x, \mu'} \colon S^1 \to \mathbb{R}$ be a function such that for any $[\theta]$ in some $A_i$, we have $[\theta] \in A_{v_{2k+1}^{x, \mu'}([\theta])}$.
With $\mu$ fixed as above, define
\[
m^{x,\mu'} = \int_{S^1} \left( x - \frac{2 \pi}{2k+1}v_{2k+1}^{x, \mu'} \right) d \mu
\]
and writing $\mu$ as $\mu = \sum_{i=1}^N a_i \delta_{[\theta_i]}$, define $J^{x, \mu'} \colon I \to V_{2l+1}$ by
\[
J^{x, \mu'}(t) = \sum_{i=1}^N a_i \delta_{\tau((1-t) x([\theta_i]) + t(\frac{2 \pi}{2k+1} v_{2k+1}^{x, \mu'}([\theta_i])+ m^{x,\mu'}) )}.
\]
This mimics the definition of $\widetilde{F}_{2k+1}$, but applies it to $\mu$, which is not in $V_{2k+1}(r)$.
By an argument similar to that in the proof of Lemma~\ref{lemma:F_is_a_support_homotopy}, each $J^{x, \mu'}(t)$ is in fact in $V_{2l+1}$.
Since $J^{x, \mu'}$ is continuous, $J^{x, \mu'}(I)$ is compact.
Note that the only reason $J^{x, \mu'}$ depends on $\mu'$ is because of the use of $v_{2k+1}^{x, \mu'}$ in these definitions.
Since there are only finitely many points in $\supp(\mu)$ and finitely many indices of arcs they are assigned to by $v_{2k+1}^{x, \mu'}$, there are only finitely many sets $J^{x, \mu'}(I)$ that can be obtained from all possible $\mu'$.  
Taking the union of these finitely many  $J^{x, \mu'}(I)$ for all possible $\mu'$, we obtain a compact set $S \subseteq W_{2k+1}$.

Any open set of $q_{2k-1}(W_{2k+1})$ containing $G_{2k+1}(\mu,t) = q_{2k-1}(\mu)$ has a preimage equal to a $(q_{2k-1})$-saturated open subset $U \subseteq W_{2k+1}$ containing $\mu$.
For any such $U$, we show $S \subseteq U$ by showing $\widetilde{F}_{2l+1}(J^{x, \mu'}(t), 1) = \widetilde{F}_{2l+1}(\mu, 1)$ for each $t \in I$ and each $\mu'$ meeting the description above.
We mimic the proof of Lemma~\ref{lemma:partial_homotopy_keeps_same_destination}, omitting details.
As in the proof of Lemma~\ref{lemma:partial_homotopy_keeps_same_destination}, we can choose $(x,\tau)$ to be a valid coordinate system for both $\mu$ and $J^{x, \mu'}(t)$ and such that all points of $\supp(\mu)$ and $\supp(J^{x, \mu'}(t))$ are sent into $(0, 2\pi)$ by $x$.
Since the masses of the corresponding arcs of $\mu$ and $J^{x, \mu'}(t)$ agree, following Equation~\ref{equation:expression_for_F(_,1)} before Lemma~\ref{lemma:partial_homotopy_keeps_same_destination}, it is sufficient to check that $m_{2l+1}^x(\mu) = m_{2l+1}^x(J^{x, \mu'}(t))$.
If $A'_0, \dots, A'_{2l}$ are the arcs of $\mu$, we have $m_{2l+1}^x(\mu) = \int_{S^1} x \, d \mu - \sum_{i=0}^{2l} \frac{2i \pi}{2l+1} \mu(A'_i)$, and analogously for $m_{2l+1}^x(J^{x, \mu'}(t))$.
Again, since the arc masses of $\mu$ and $J^{x, \mu'}(t)$ agree, we only must check that $\int_{S^1} x\, d\mu = \int_{S^1} x\, d (J^{x, \mu'}(t))$.
Using the notation above for $\mu$ and $J^{x, \mu'}(t)$, we have
\begin{align*}
\int_{S^1} x \, d(J^{x, \mu'}(t)) &= \sum_{i=1}^N a_i x \circ \tau \left( (1-t)x([\theta_i])+t\left( \frac{2 \pi}{2k+1}v_{2k+1}^{x, \mu'}([\theta_i]) + m^{x,\mu'} \right) \right)  \\
&= \sum_{i=1}^N a_i \left( (1-t)x([\theta_i])+t\left( \frac{2 \pi}{2k+1}v_{2k+1}^{x, \mu'}([\theta_i]) + m^{x,\mu'} \right) \right)  \\
&=  (1-t)\int_{S^1} x \, d\mu + t \int_{S^1} \frac{2\pi}{2k+1} v_{2k+1}^{x, \mu'} d\mu + t\,m^{x,\mu'}\\
&= \int_{S^1} x \, d\mu,
\end{align*}
where the last equality follows from the definition of $m^{x, \mu'}$.

Therefore we have $S \subseteq U$, and since $S$ is compact and $U$ is open in $W_{2k+1}$, there exists\footnote{This is a general fact about compact subsets of metric spaces, which was also used in the proof of Lemma~\ref{lemma:HEP_for_V}.  See, for instance, Exercise 2 in Section 27 of~\cite{Munkres}.} an $\varepsilon > 0$ such that any point within $\varepsilon$ of $S$ is contained in $U$.
We will show that even though $\{ \widetilde{F}_{2k+1}(\mu_n, t_n) \}_n$ does not necessarily converge to a specific point in $S$, the points of the sequence become arbitrarily close to $S$ as $n$ approaches infinity and are thus contained in $U$ for all sufficiently large $n$.
Since $\mu_n$ approaches $\mu$, we can set $\mu' = \mu_n$ for all sufficiently large $n$. 
We can also make a choice of a coordinate system $(x,\tau)$ that meets the requirements above simultaneously for all $\mu_n$ with $n$ sufficiently large: for instance, let $x$ exclude a point opposite a point of $\supp(\mu)$.
Then we have
\[
m^{x,\mu_n} = \int_{S^1} \left( x - \frac{2 \pi}{2k+1}v_{2k+1}^{x, \mu_n} \right) d \mu
\]
\[
m_{2k+1}^{x}(\mu_n) = \int_{S^1} \left( x - \frac{2 \pi}{2k+1} v_{2k+1}^{x, \mu_n} \right) d \mu_n,
\]
where $m_{2k+1}^{x}$ is as defined in Section~\ref{section:collapse}.
Thus,
\[
J^{x,\mu_n}(t_n) = \sum_{i=1}^N a_i \delta_{\tau((1-t_n) x([\theta_i]) + t_n(\frac{2 \pi}{2k+1} v_{2k+1}^{x, \mu_n}([\theta_i])+ m^{x,\mu_n}) )},
\]
and if $\mu_n = \sum_{j=1}^{N_n} a_{n,j} \delta_{[\theta_{n,j}]}$, then
\[
\widetilde{F}_{2k+1}(\mu_n,t_n) = \sum_{j=1}^{N_n} a_{n,j} \delta_{\tau((1-t_n) x([\theta_{n,j}]) + t_n(\frac{2 \pi}{2k+1} v_{2k+1}^{x, \mu_n}([\theta_{n,j}])+ m_{2k+1}^{x}(\mu_n)) )}.
\]

We show that $\widetilde{F}_{2k+1}(\mu_n,t_n)$ becomes close to $S$ by showing the distance between $\widetilde{F}_{2k+1}(\mu_n,t_n)$ and $J^{x,\mu_n}(t_n)$ approaches zero as $n$ approaches infinity.
For all sufficiently large $n$, we will have a bound $| m^{x,\mu_n} - m_{2k+1}^{x}(\mu_n) | < \frac{\varepsilon}{2}$ by Lemma~\ref{lemma:close_arc_masses}(\ref{arc_mass_lemma_item_close_arc_masees}) and the fact that $\int_{S_1} x \, d\mu_n$ approaches $\int_{S_1} x \, d\mu$ as $n$ approaches infinity (by Lemma~\ref{lemma:weak_convergence_and_Wasserstein_convergence}, replacing $x$ with a suitable bounded continuous function that does not change the value of the integrals).
As long as $n$ is sufficiently large, we can define the arcs $A_0, \dots, A_{2k}$ as above with $\mu' = \mu_n$ and these arcs collectively contain $\supp(\mu)$ and $\supp(\mu')$.
We now fix $n$ and let $\{\kappa_{i,j}\}$ be an optimal matching between $\mu$ and $\mu_n$. 
Distinct arcs are separated by a distance of at least $\pi-r$, so a mass of no more than $\frac{d_W(\mu,\mu_n)}{\pi-r}$ may be transported between distinct arcs by $\{\kappa_{i,j}\}$.
Thus, letting $B = \{ (i,j) \mid v_{2k+1}^{x, \mu_n}([\theta_i]) = v_{2k+1}^{x, \mu_n}([\theta_{n,j}])\}$, we have
\[
\sum_{(i,j) \in B} \kappa_{i,j} \geq 1 - \frac{d_W(\mu,\mu_n)}{\pi-r}.
\]
We define a partial matching for the measures $J^{x,\mu_n}(t_n)$ and $\widetilde{F}_{2k+1}(\mu_n,t_n)$ by using the same values $\kappa_{i,j}$ for $(i,j) \in B$.
We will use the fact that for $(i,j) \in B$, the distance $| x([\theta_{i}]) - x([\theta_{n,j}]) |$ is the arc length between $[\theta_{i}]$ and $[\theta_{n,j}]$ in the arc $A_{v_{2k+1}^{x,\mu_n}([\theta_i])}$ containing them, so $| x([\theta_{i}]) - x([\theta_{n,j}]) | = d_{S^1}([\theta_i], [\theta_{n,j}])$.
Thus, the cost of this partial matching is bounded by
\begin{align*}
&\sum_{(i,j) \in B} \kappa_{i,j} 
d_{S^1} \Big( 
\tau((1-t_n) x([\theta_i]) + t_n(\tfrac{2 \pi}{2k+1} v_{2k+1}^{x, \mu_n}([\theta_i])+ m^{x,\mu_n}) ) ,\\
&\hspace{2.5cm} \tau((1-t_n) x([\theta_{n,j}]) + t_n(\tfrac{2 \pi}{2k+1} v_{2k+1}^{x, \mu_n}([\theta_{n,j}])+ m_{2k+1}^{x}(\mu_n)) )
\Big)\\
\leq &\sum_{(i,j) \in B} \kappa_{i,j} 
d_{\mathbb{R}} \Big( 
(1-t_n) x([\theta_i]) + t_n(\tfrac{2 \pi}{2k+1} v_{2k+1}^{x, \mu_n}([\theta_i])+ m^{x,\mu_n})  ,\\
&\hspace{2.5cm} (1-t_n) x([\theta_{n,j}]) + t_n(\tfrac{2 \pi}{2k+1} v_{2k+1}^{x, \mu_n}([\theta_{n,j}])+ m_{2k+1}^{x}(\mu_n)) 
\Big)\\
\leq & (1-t_n)\sum_{(i,j) \in B} \kappa_{i,j} | x([\theta_{i}]) - x([\theta_{n,j}]) | + t_n \sum_{(i,j) \in B} \kappa_{i,j} | m^{x,\mu_n} - m_{2k+1}^{x}(\mu_n) |
\\
= & (1-t_n)\sum_{(i,j) \in B} \kappa_{i,j} d_{S^1}([\theta_i], [\theta_{n,j}]) + t_n \sum_{(i,j) \in B} \kappa_{i,j} | m^{x,\mu_n} - m_{2k+1}^{x}(\mu_n) |
\\
< & (1-t_n)d_W(\mu,\mu_n) + t_n \frac{\varepsilon}{2}\\
\leq & d_W(\mu,\mu_n) + \frac{\varepsilon}{2}
\end{align*}
There is mass at most $\frac{d_W(\mu,\mu_n)}{\pi-r}$ remaining, and this mass can be transported arbitrarily at a cost of at most $\frac{\pi}{\pi-r} d_W(\mu,\mu_n)$.
This shows there exists a matching between $J^{x,\mu_n}(t_n)$ and $\widetilde{F}_{2k+1}(\mu_n,t_n)$ with cost at most $ (1+\frac{\pi}{\pi-r}) d_W(\mu,\mu_n) + \frac{\varepsilon}{2}$.
Thus, for all sufficiently large $n$, we have $d_W(J^{x,\mu_n}(t_n), \widetilde{F}_{2k+1}(\mu_n,t_n)) < \varepsilon$.

Therefore, for all sufficiently large $n$, $\widetilde{F}_{2k+1}(\mu_n,t_n)$ is within $\varepsilon$ of $J^{x,\mu_n}(t_n)$ and is thus within $\varepsilon$ of $S$, so it is in $U$.
So for any open neighborhood of $q_{2k-1}(\mu)$ in $q_{2k-1}(W_{2k+1})$, we have shown $q_{2k-1} \circ \widetilde{F}_{2k+1}(\mu_n,t_n)$ is in this neighborhood for all sufficiently large $n$, so $\{G_{2k+1}(\mu_n,t_n)\}_n$ converges to $G_{2k+1}(\mu, t)$. 
This completes the proof that $G_{2k+1}$ is continuous.
\end{proof}

\end{document}